\newtheorem{theorem}{Theorem}[section] 
\newtheorem{proposition}[theorem]{Proposition} 
\newtheorem{conjecture}[theorem]{Conjecture} 
\newtheorem{corollary}[theorem]{Corollary} 
\newtheorem{lemma}[theorem]{Lemma}
\theoremstyle{definition}
\newtheorem{definition}[theorem]{Definition}
\newtheorem{remark}[theorem]{Remark} 
\newtheorem{example}[theorem]{Example}
\newcommand{\equic}[2][1 cm]{
\draw (0,0) circle (#1);
\pgfmathparse{#1/1 cm+0.25};
\edef\oc{\pgfmathresult cm};
  \foreach \i in {2,...,#2} {
    \pgfmathtruncatemacro\result{\i-1}
    \coordinate (N\result) at (-\i*360/#2:#1);
    \fill[black] (N\result) circle (0.05 cm);
    \draw (-\i*360/#2:\oc) node{$\result$};
}
\pgfmathtruncatemacro\r{#2}
\coordinate (N\r) at (-360/#2:#1);
\fill[black] (N\r) circle (0.05 cm);
\draw (-360/#2:\oc) node{$\r$};
}
\newcommand{\kk}{\mathbf{k}}
\newcommand{\Hom}{\operatorname{Hom}}
\newcommand{\Tam}{\operatorname{Tam}}
\newcommand{\Inc}{\operatorname{Inc}}
\newcommand{\Dec}{\operatorname{Dec}}
\newcommand{\Po}{\mathcal{P}}
\newcommand{\Se}{\mathbb{S}}
\newcommand{\modu}{\operatorname{-mod}}
\newcommand{\Rpk}{A_\kk(P)\modu}
\newcommand{\RTk}{A_\kk(\mathrm{Tam}_n)}
\newcommand{\NCT}{\mathbf{NCP}}
\title{The bounded derived categories of the Tamari lattices are fractionally Calabi-Yau}
\author{Baptiste Rognerud}
\date{}
\newcolumntype{P}[1]{>{\centering\arraybackslash}p{#1}}
\begin{document}

\maketitle


\begin{abstract}
We prove that the bounded derived category of the incidence algebra of the Tamari lattice is fractionally Calabi-Yau, giving a positive answer to a conjecture of Chapoton. The proof involves a combinatorial description of the Serre functor of this derived category on a sufficiently nice family of indecomposable objects.  
 \end{abstract}
 \section{Introduction}
 
 \emph{Tamari lattices} are partial orders on Catalan sets introduced by Dov Tamari in his thesis. They are now classical and well studied objects in combinatorics and they appear in an incredible number of recent developments in mathematics such as, algebra, computer science, category theory, topology and many others. 
 
 In representation theory, they have two classical interpretations: as posets of \emph{tilting modules} of an equioriented quiver of type $A$ (\cite{buan_krause,happel_unger}) and as posets of \emph{cluster-tiling} modules for the same quiver. In particular, the Tamari lattices are part of the \emph{Cambrian lattices} of type $A$ in the sense of Reading (\cite{reading}).
 
 Chapoton was one of the first to realize that the representation theory of these lattices is also extremely fascinating. At first he used the Auslander-Reiten translation of the Tamari lattices to describe an anti-cyclic structure on the Dendriform operad (\cite{chapoton_op,chapoton_coxeter_tamari}). Then, using this anti-cyclic structure he proved that the so-called \emph{Coxeter transformation} of the Tamari lattices is \emph{periodic} (\cite{chapoton_coxeter_tamari}). 
 
 A couple of years later he published a beautiful article (\cite{chapoton_derived_tamari}) which contains three important conjectures on the derived category of the incidence algebras of these lattices. The main conjecture can be seen as a `categorification' of his result on the periodicity of the Coxeter transformation.  
 
 \begin{conjecture}[Chapoton]\label{conj_A} Let $n\in \mathbb{N}$ and $\Tam_n$ be the Tamari lattice of binary trees with $n$ inner vertices. Let $\kk$ be a field. Then, the bounded derived category of the incidence algebra of $\Tam_n$ over $\kk$ is fractionally Calabi-Yau of dimension $\big(n(n-1),2n+2\big)$. 
 \end{conjecture}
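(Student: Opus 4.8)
The plan is to identify the Serre functor of $\mathcal{D}^b(A)$, where $A=A_\kk(\Tam_n)$ is the incidence algebra, with the derived Nakayama functor and then to control it combinatorially. Since the incidence algebra of a finite poset has finite global dimension, the bounded derived category $\mathcal{D}^b(A)$ admits a Serre functor $\nu$, realized as the derived Nakayama functor $\nu = D\,\mathrm{RHom}_A(-,A)$ with $D=\Hom_\kk(-,\kk)$. The statement to be proved is then the \emph{functor} isomorphism $\nu^{2n+2}\cong[\,n(n-1)\,]$, where $[1]$ is the shift. Applying the Grothendieck group functor sends $\nu$ to the Coxeter transformation $\Phi$ and $[1]$ to $-\mathrm{id}$, so the identity specializes to $\Phi^{2n+2}=(-1)^{n(n-1)}\mathrm{id}=\mathrm{id}$ (note $n(n-1)$ is always even); this is precisely Chapoton's periodicity of the Coxeter transformation. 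Thus the decategorified shadow is already known, and the genuine task is to lift it from $K_0$ to an isomorphism of exact functors.

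First I would reduce the functor identity to a statement about objects. The idea is to produce a family $\mathcal{F}$ of indecomposable objects that thickly generates $\mathcal{D}^b(A)$ and on which $\nu$ acts in a controlled way. Concretely, the aim is that $\nu$ already \emph{permutes} $\mathcal{F}$ up to shift, i.e. $\nu(M)\cong\sigma(M)[\,d_M\,]$ for a bijection $\sigma$ of $\mathcal{F}$ and integers $d_M\in\ZZ$. Iterating, $\nu^{2n+2}(M)\cong \sigma^{2n+2}(M)\big[\sum_{j=0}^{2n+1} d_{\sigma^j(M)}\big]$, so the target reduces to showing that $\sigma^{2n+2}=\mathrm{id}$ on $\mathcal{F}$ and that the accumulated shift over each $\sigma$-orbit equals the single constant $n(n-1)$, uniformly.

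The combinatorial heart of the proof is the construction of such a family and the explicit computation of $\sigma$ and the shifts $d_M$. The natural candidates are the interval modules $\kk[x,y]$ attached to intervals of $\Tam_n$ (or a distinguished subfamily indexed by binary trees), since on these the homology of $\nu$ can be read off from the order complexes of the intervals involved. The goal is to show that $\sigma$ is a combinatorial bijection of \emph{rotation type} — reflecting the cyclic symmetry that underlies Chapoton's periodicity — whose orbits all have length dividing $2n+2$, and that the homological shifts $d_M$ add up, along every such orbit, to exactly $n(n-1)$ regardless of the orbit. Establishing this is where I expect the main obstacle to lie: \emph{a priori} $\nu$ is a complicated derived functor with no reason to send an indecomposable module to a shift of another module, so one must prove both that $\mathcal{F}$ is $\nu$-stable up to shift and that the shifts conspire to give one and the same value $n(n-1)$. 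Controlling this requires a precise combinatorial model, presumably built from the recursive structure of binary trees, for how $\nu$ transforms the relevant intervals.

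Finally, once $\mathcal{F}$ is shown to be $\nu$-stable up to shift with uniform total shift $n(n-1)$ over each length-$(2n+2)$ orbit, I would promote the object-level isomorphisms $\nu^{2n+2}(M)\cong M[\,n(n-1)\,]$ to an isomorphism of functors. Here one uses that $\nu^{2n+2}$ and $[\,n(n-1)\,]$ are both standard autoequivalences of $\mathcal{D}^b(A)$: agreement (compatible with morphisms) on a thick generating family then forces a natural isomorphism, which must be verified with care rather than asserted, for instance by matching the two functors on a tilting object together with the induced action on its endomorphism algebra. This yields $\nu^{2n+2}\cong[\,n(n-1)\,]$, i.e. the fractional Calabi-Yau property of dimension $\big(n(n-1),2n+2\big)$, proving the conjecture.
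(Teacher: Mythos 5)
Your outline correctly identifies the architecture of the paper's argument --- the Serre functor realized as the derived Nakayama functor, reduction to a distinguished family of objects that $\Se$ permutes up to shift, a rotation-like symmetry of order $2n+2$, and an accumulated shift of $n(n-1)$ along each orbit --- but at both decisive points it names the difficulty rather than resolving it, and in each case the missing content is the substance of the proof. The first gap is the promotion from object-level to functor-level isomorphisms. Knowing $\Se^{2n+2}(P_x)\cong P_x[n(n-1)]$ for every projective indecomposable (equivalently, on the tilting object $A$ itself) only yields $\Se^{2n+2}\cong -\otimes_A^{L}\,{}_{\phi}A_1\circ[n(n-1)]$ for some $\kk$-algebra automorphism $\phi$ of $A$ fixing the idempotents; the functor equals the shift only if $\phi$ is \emph{inner}, and for a general poset this can fail. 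The paper closes this with a theorem of Stanley: for a poset with a unique minimal or maximal element, every automorphism of the incidence algebra fixing the idempotents is inner --- and $\Tam_n$ is a lattice, so the hypothesis holds. Your suggestion of matching the two functors on a tilting object and its endomorphism algebra points at the right object but does not address this inner-automorphism obstruction, which is exactly where a structural hypothesis on the poset must enter.

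The second and larger gap is the combinatorial model itself: you posit a family of interval modules permuted up to shift by $\Se$ with a permutation ``of rotation type,'' but the existence of such a family is the main theorem, not an input --- a priori $\Se$ sends a module to a genuine two-term complex already for a five-element poset, as the paper's first example shows. In the paper the family is the set of \emph{exceptional} intervals, indexed by noncrossing trees (there are $\frac{1}{2n+1}\binom{3n}{n}$ of them, so this is not a family indexed by binary trees); these admit ``boolean'' projective resolutions indexed by the subsets of the decreasing cover relations of the interval-poset, which is what makes $\Se$ computable on them; the induced permutation is \emph{planar duality} of noncrossing trees, whose square is the rotation by $\frac{2\pi}{n+1}$, whence the order $2n+2$; and the shift attached to a tree is its number of descents, so that the total shift along an orbit is $n(n+1)$ minus twice the number of edges, i.e.\ $n(n-1)$. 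None of this follows from the generic considerations in your proposal, and without a concrete $\Se$-stable family with computable shifts the reduction in your last paragraph has nothing to act on.
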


 The notion of \emph{Calabi-Yau} category has its roots in algebraic geometry: it generalizes elliptic curves, abelian varieties and $K3$ surfaces. One can translate the geometric properties of Calabi-Yau varieties into algebraic properties of the derived categories of coherent sheaves on them, and this leads to the notion of Calabi-Yau category that are now well studied. \emph{Fractionally Calabi-Yau} categories satisfies weaker axioms and there are more subtle geometric examples of such categories. They also appear in relation with singularity theory, mirror symmetry and Fukaya categories (See Section $6$ of \cite{chapoton_derived_tamari} for a conjecture relating Tamari lattices and \emph{Fukaya categories}). 
 
 In representation theory, they appear naturally as the bounded derived categories of the path algebra of an orientation of a Dynkin diagram or as derived categories of coherent sheaves on a  weighted projective line of tubular type (\cite{roosmalen}). Conjecture \ref{conj_A} provides an algebraic and combinatorial example with \emph{very different} homological properties: the incidence algebras of the Tamari lattices are not hereditary in general, and when $n\geqslant 4$, they have a wild representation type (\cite{chapoton_rognerud_wildness}).

 Our main objective in this article is to prove Conjecture \ref{conj_A}. The first part of the article is mostly algebraic and it reduces the conjecture to a much easier problem only involving a finite family of objects in the derived category of the Tamari lattice. The second part of the article is mostly combinatorial and is devoted to the understanding of this finite family.

In Section \ref{representation}, we define the category of $\kk$-linear representations of an arbitrary finite poset as the category of finitely generated right-modules over its incidence algebra. We recall that this category has a finite global dimension (bounded by the length of the largest chain in the poset). This implies that its bounded derived category has a so-called \emph{Serre functor}. This is a universal endo-functor of the derived category which induces, up to a sign, the Coxeter transformation on the Grothendieck group. This leads to the definition of a fractionally Calabi-Yau category (Definition \ref{def_CY}). A bounded derived category is \emph{fractionally Calabi-Yau} if some power of the Serre functor is isomorphic to a shift. 

The classical algebraic example is the bounded derived category of the path algebra of an orientation of a Dynkin diagram. Here we illustrate it in Example \ref{ex_cy} with the slightly more complicated case of an algebra derived equivalent to an orientation of $D_5$. This example is of course trivial from an algebraic point of view, but it illuminates the difficulty of understanding the images of indecomposable modules under the Serre functor.  
 
In Section \ref{poset_min}, we prove the first main result of the article which reduces the property of being fractionally Calabi-Yau to a property involving only finitely many objects of the derived category. 
 
 \begin{theorem}\label{theoA}
Let $X$ be a finite poset with a unique minimal element or a unique maximal element. Let $A_\kk(X)$ be the incidence algebra of $X$. If there are integers $m$ and $n$ such that $\Se(P)^n \cong P[m]$ in $D^b(X)$ for all projective indecomposable $A_\kk(X)$-modules, then the category $D^b\big(A_\kk(X)\big)$ is $(m,n)$-fractionally Calabi-Yau.
\end{theorem}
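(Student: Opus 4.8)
The plan is to leverage the hypothesis that $X$ has a unique minimal (or maximal) element to show that the isomorphism $\Se(P)^n \cong P[m]$, which is assumed only on projective indecomposables, automatically propagates to the entire derived category. The key observation is that the Serre functor $\Se$ is an equivalence (indeed, an autoequivalence) of the triangulated category $D^b(A_\kk(X))$, and the projective indecomposables generate $D^b(A_\kk(X))$ as a triangulated category — every object is built from them by taking cones, shifts, and direct summands. Concretely, the functor $F := \Se^n[-m]$ is an exact autoequivalence of $D^b(A_\kk(X))$, and what we must show is that $F \cong \mathrm{id}$. By the definition of fractionally Calabi-Yau recalled in the excerpt (Definition~\ref{def_CY}), this is exactly the statement that the category is $(m,n)$-fractionally Calabi-Yau.

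**First I would** observe that $F$ restricted to the projectives is isomorphic to the identity: the hypothesis gives $F(P) \cong P$ for every projective indecomposable $P$. The subtlety is that a natural isomorphism $F \cong \mathrm{id}$ of functors is a stronger statement than merely having $F(P) \cong P$ for each object $P$ individually; one needs the isomorphisms to be compatible. **The main obstacle** is therefore upgrading the object-wise isomorphisms to a coherent natural transformation. This is where the unique minimal (or maximal) element is essential: I would use it to rigidify the choices. The incidence algebra $A_\kk(X)$ of a poset with a unique source has a distinguished projective (the indecomposable projective at the minimal element, which is the simple/one-dimensional module or a canonical summand), and the category of projectives has a directedness that kills the nontrivial automorphisms that would otherwise obstruct naturality.

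**Concretely, I would** proceed as follows. Since $F$ is an exact autoequivalence fixing each projective indecomposable up to isomorphism, $F$ induces a permutation on the (finite) set of isomorphism classes of projective indecomposables which is the identity. I would then fix a set of isomorphisms $\phi_P \colon F(P) \xrightarrow{\sim} P$ and check that, after possibly rescaling them using the existence of the unique extremal element to normalize, they assemble into a natural transformation on the additive subcategory $\mathrm{proj}\,A_\kk(X)$ — that is, they commute with all morphisms between projectives. The unique minimal element provides a ``root'' object through which one can propagate a consistent normalization: because the quiver of the incidence algebra is directed with a single source, Hom-spaces between projectives are controlled, and an automorphism of the identity functor on projectives (an element of the center acting invertibly) is pinned down. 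Having established $F \cong \mathrm{id}$ on $\mathrm{proj}\,A_\kk(X)$, I would extend this isomorphism along the triangulated structure: since $\mathrm{proj}\,A_\kk(X)$ generates $D^b(A_\kk(X))$ and $F$ is exact, a standard dévissage (induction on the length of a projective resolution, using the five-lemma for triangulated categories) propagates the natural isomorphism to all of $D^b(A_\kk(X))$, yielding $\Se^n \cong [m]$ as functors and hence the fractionally Calabi-Yau property.
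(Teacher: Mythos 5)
Your skeleton --- set $F:=\Se^n[-m]$, observe that $F$ fixes each projective indecomposable up to isomorphism, use the unique extremal element to normalize these object-wise isomorphisms into a natural one on the projectives, then propagate to all of $D^b\big(A_\kk(X)\big)$ --- matches the paper's strategy, and you have correctly located where the hypothesis on $X$ enters: the obstruction to choosing the $\phi_P$ compatibly is an algebra automorphism of $A_\kk(X)$ fixing the idempotents (it multiplies each basis element $(x,y)$ by a scalar $k_{x,y}$), and the unique minimal or maximal element forces any such automorphism to be inner. That is exactly Stanley's theorem (Lemma \ref{stanley} in the paper), which the paper combines with Herschend--Iyama to produce the twisted bimodule $\,_{\phi}A_1$. (Minor slip: the indecomposable projective at the unique minimal element is the \emph{largest} projective, of dimension $|X|$, not the simple module.)

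The genuine gap is the final step. A natural isomorphism of triangulated functors given only on a generating additive subcategory does \emph{not} extend to the whole triangulated category by ``induction on projective resolutions with the five lemma'': cones are not functorial, so the five lemma produces \emph{some} isomorphism $F(Z)\cong Z$ on the third vertex of each triangle but gives no control over its compatibility with further morphisms; iterating only reproduces the object-wise statement, never an isomorphism of functors. The correct mechanism --- and what the paper actually does --- is to work one level up with the complex of bimodules representing $\Se^n$: since $\Se^n(A)\cong A[m]$ as a right module, the bimodule complex $M=D(A)^{\otimes^{L} n}[-m]$ has homology concentrated in degree $0$, hence $M\cong H^0(M)\cong \,_{\phi}A_1$ in the derived category of bimodules, and an isomorphism \emph{there} does induce an isomorphism of the associated standard functors; Stanley's lemma then gives $\,_{\phi}A_1\cong A$ and so $\Se^n\cong[m]$. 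Your naturality argument on the projectives, once carried out, is secretly this bimodule identification (naturality with respect to $\End(A_A)\cong A$ encodes the left module structure), so the repair is to route the conclusion through bimodules rather than through d\'evissage.
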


In the rest of this article we check the hypothesis of Theorem \ref{theoA} for the Tamari lattices. As it is suggested by Example \ref{ex_cy}, this is the difficult part of the proof. In general for a finite poset, the orbits of the projective indecomposable modules under the action of the Serre functor contains indecomposable complexes (not isomorphic to a shift of a module). However, for the Tamari lattices, it turns out that these orbits only contain shifts of indecomposable modules. We actually prove this on a larger family of modules that we call \emph{exceptional}. We show that the Serre functor has a nice combinatorial description on this family. 

 Exceptional modules come from Chapoton's work on the Dendriform operad $\mathbf{Dend}$. More precisely, in \cite{chapoton_mould} he introduced the operad $\mathbf{NCP}$ of noncrossing plants and the operad $\mathbf{Mould}$ of functions with a `variable number of variables'. Then, he proved that there is a commutative diagram of operads

\[
\xymatrix{
\mathbf{NCP}\ar[rd]^{\phi}\ar[rr]^{\Theta} & & \mathbf{Dend} \ar[dl]_{\psi} \\
& \mathbf{Mould}
}
\]
where all the morphisms are injective. 

The family of noncrossing plants contains the more classical family of noncrossing trees (See Section \ref{bij_ip} for a precise definition). Since the elements of the Dendriform operad are linear combinations of binary trees, we can identify them with the elements of the Tamari lattices. It was proved in \cite{chapoton_etal_mould} that the image of a noncrossing tree by $\Theta$ is of the form $\sum_{t\in I} t$ where $I$ is an interval in a Tamari lattice. Such an interval is called \emph{exceptional}. For our purpose, an interval in the Tamari lattice is seen as an indecomposable module over its incidence algebra. As a consequence, in the derived category of the incidence algebra of the Tamari lattice there is a family of indecomposable objects in bijection with the noncrossing trees.

Since not all the intervals of the Tamari lattices are exceptional, the first task was to characterize the exceptional intervals. This was done in a previous article (\cite{rognerud_interval_poset}). They have a simple characterization in terms of the so-called \emph{interval-posets} of the Tamari lattices recently introduced by Ch\^atel and Pons (\cite{pons_chatel}). This characterization is recalled in Section \ref{section_tam}, where we also introduce a new bijection between noncrossing trees and exceptional interval-posets which may be of independent interest. 

In Section \ref{section_dico}, we show that the family of exceptional intervals contains the simple modules, the projective indecomposable modules and the injective indecomposable modules. We also show that they have particularly nice `boolean' projective resolutions. In Section \ref{se_fun}, we use these projective resolutions in order to obtain a first description of the action of the Serre functor on these objects in terms of the two bijections introduced in Section \ref{bij_ip}. Finally, in Section \ref{duality}, we investigate a \emph{planar duality} of noncrossing trees and we show that the Serre functor acts as a planar dual up to a shift on this family. In conclusion, we prove the following result. 

\begin{theorem}\label{theoB}
Let $T$ be a noncrossing tree of size $n$. Let $I_T$ be the corresponding indecomposable object in $D^b(\Tam_n)$. Then, there is an integer $n_T$ such that 
\[\Se(I_T) \cong I_{T^*}[n_T] \]
where $I_{T^*}$ is the indecomposable object in $D^b(\Tam_n)$ corresponding to the planar dual of $T$.
\end{theorem}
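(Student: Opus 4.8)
The plan is to compute the Serre functor on the family of exceptional intervals explicitly, exploiting two structural facts that are established earlier in the paper: first, that the exceptional intervals admit boolean projective resolutions (Section \ref{section_dico}), and second, that the Serre functor on a derived category of an incidence algebra is given by $\Se(-) = (-) \otimes^{\mathbb{L}}_{A} DA$, where $DA$ is the $\kk$-dual bimodule (the Nakayama bimodule). Concretely, I would start by fixing a noncrossing tree $T$ of size $n$ and the corresponding exceptional interval-poset, and I would use the boolean projective resolution to present $I_T$ as an explicit bounded complex of projectives. Applying $\Se$ then amounts to replacing each projective $P_x$ in the resolution by the corresponding injective $I_x$, since $\Se$ sends indecomposable projectives to indecomposable injectives (shifted into degree zero). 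The resulting complex of injectives is the candidate for $\Se(I_T)$, and the whole problem reduces to identifying its homology.

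The key combinatorial input is the pair of bijections from Section \ref{bij_ip} between noncrossing trees and exceptional interval-posets, together with the planar duality $T \mapsto T^*$ investigated in Section \ref{duality}. The strategy is to show that the complex of injectives obtained above, after taking homology, is concentrated in a single degree and is isomorphic to the interval module $I_{T^*}$ associated with the planar dual. I would proceed by translating both the boolean resolution and the injective co-resolution into the language of interval-posets: a boolean resolution of an exceptional interval corresponds to a combinatorially transparent cube-like indexing set, and the planar duality should exchange the projective (``source'') description with the injective (``sink'') description. The cleanest route is likely to identify a natural involution on the relevant combinatorial data that matches the homological operation ``replace the boolean projective resolution by the boolean injective co-resolution of the dual tree,'' and then to verify that this involution is exactly the planar duality $T \mapsto T^*$. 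The integer shift $n_T$ is then read off as the homological degree in which the homology is concentrated, which should be expressible in terms of simple invariants of $T$ (for instance, the number of edges or the size).

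The hard part, and the crux of the argument, will be the final identification $\Se(I_T) \cong I_{T^*}[n_T]$ as objects in $D^b(\Tam_n)$, rather than merely at the level of the Grothendieck group where one only sees the Coxeter transformation. It is not a priori clear that the complex of injectives has homology in a single degree; this is precisely the phenomenon, highlighted in the introduction, that distinguishes the Tamari case from a generic poset, where the orbits under $\Se$ contain genuine complexes. Establishing single-degree concentration requires a careful analysis of the differentials in the transformed complex, and I expect this to depend essentially on the boolean (cube) structure of the resolutions: a boolean poset indexing the resolution forces the associated complex to behave like a Koszul-type complex, whose homology can be controlled. I would therefore organize the proof so that the boolean structure does the homological-algebra work, while the planar duality is invoked only to name the surviving homology object. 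The matching between the planar dual and the injective co-resolution is the most delicate combinatorial step, and I would verify it first on small cases and on the distinguished subfamilies (simples, projectives, injectives) established in Section \ref{section_dico}, using these as a consistency check before proving it in general.
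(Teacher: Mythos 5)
Your proposal follows essentially the same route as the paper: it applies the Nakayama functor term by term to the boolean projective resolution of Lemma \ref{reso_proj} via Proposition \ref{serre_rong}, recognizes the resulting complex of injectives as the (shifted) boolean injective co-resolution of another exceptional interval (this is the content of Lemmas \ref{char+delta} and \ref{c+j-r} and Propositions \ref{prop_serre_obscure} and \ref{serre_obscure}), and identifies the combinatorial operation $\theta\circ\psi^{-1}$ with planar duality (Proposition \ref{psironddual}), with the shift $n_T$ read off as the length $|J|$ of the boolean cube. The plan is sound and matches the paper's argument in both structure and key ideas.
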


Since the square of the planar duality is nothing but a \emph{rotation}, we see that up to a shift the Serre functor has a finite order on these objects. The proof of Conjecture \ref{conj_A} follows from Theorems \ref{theoA} and \ref{theoB} and a combinatorial interpretation of the integers $n_T$ in terms of certain edges in the noncrossing trees $T^*$.

The image of the Serre functor on the Grothendieck group of $D^b(\Tam_n)$ induces up to shift the so-called Coxeter transformation of the incidence algebra of $\Tam_n$. As a consequence, we recover the result of Chapoton saying that the Coxeter transformation is periodic of period $2n+2$. Our approach is also interesting at this level since it gives a generating set of the Grothendieck group on which we clearly see why the Coxeter transformation is periodic. In particular, we are able to deduce new informations about the Coxeter matrix (See Proposition \ref{cox_mat_new}).

\paragraph{Acknowledgement}
This work was started when I was a postdoc at the University of Strasbourg and I am grateful to Frédéric Chapoton for introducing me to this subject, for his support, his comments and the many things he taught me. In particular for explaining to me the relevance of the noncrossing trees for Conjecture \ref{conj_A} and for sharing with me his conjectural version of Theorem~\ref{theoB}.

 \section{Representations of finite posets}\label{representation}
In the literature, there are two main points of view on the representations of finite posets. Here we consider representations of the incidence algebra of the poset. This is very different from the other kind of ``representations of posets", that was considered by Nazarova, Kleiner and others \cite{kleiner,nazarova,simson} which involves a non-abelian sub-category of the category of modules over a one-point extension of the Hasse diagram.
 
The \emph{incidence algebra} $A_{\kk}(P)$ of a poset $P$ over a field $\kk$ has a basis
$(a,b)$ indexed by pairs of comparable elements $a \leq b$ in $P$
and its associative product is defined on this basis by
\begin{equation*}
  (a,b) (c,d) = 
  \begin{cases}
    0 &\text{if}\quad b \not= c,\\
    (a,d) &\text{if}\quad b = c.
  \end{cases}
\end{equation*}

As a consequence, $A_{\kk}(P)$ is a finite dimensional algebra whose dimension is the number of intervals in the poset $P$. In order to avoid confusion, we reserve the notation $(a,b)$ for the basis elements of the incidence algebra. The intervals of the poset $P$ will be denoted by $[a,b]$.

In this article, we define the category of $\kk$-linear representations of the finite poset $P$ as the category of finite dimensional \emph{right-modules} over the incidence algebra $A_\kk (P)$. 
\begin{remark}
Alternatively, we can define the category of representations of a finite poset as the category of functors from the poset to the category of vector spaces. It is classical that this category is equivalent to the category of right modules over the incidence algebra. We think that both points of view are useful, but in order to simplify the exposition we restrict ourself to the category of modules over the incidence algebra. 
\end{remark}

The category $\Rpk$ is an \emph{abelian} category with \emph{enough projective} objects and \emph{enough injective} objects. 

For $x\in P$, let $P_x = (x,x)\cdot A_\kk(P)$. Similarly we let $I_x = \Hom_\kk(A_\kk(P)\cdot (x,x),\kk)$. Let $S_x$ be the $A_\kk(P)$-module of dimension $1$ on which every basis element acts by $0$ except for $(x,x)$ which acts by $1$. 

\begin{proposition}\label{basic_module}
Let $P$ be a finite poset and $\kk$ be a field. Let $x,y\in P$.  
\begin{enumerate}
\item The module $S_x$ is simple.
\item The module $P_x$ is projective and the module $I_x$ is injective. They are both indecomposable. $P_x$ is a projective cover of $S_x$ and $I_x$ is an injective envelope of $S_x$. 
\item \[ \Hom_{A_\kk(X)}\big(P_x, P_y\big) \cong \Hom_{A_\kk(X)}\big(I_x, I_y\big)  \cong \left\{\begin{array}{c}\kk \hbox{ if $y \leqslant x$, } \\0 \hbox{ otherwise.}\end{array}\right.\]
\item A nonzero morphism between two projective indecomposable modules is a monomorphism.
\item A nonzero morphism between two injective indecomposable modules is an epimorphism. 
\item $\Rpk$ has a finite global dimension bounded by the largest chain in $P$.
\end{enumerate}
\end{proposition}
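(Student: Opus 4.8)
The plan is to dispatch the structural statements (1)--(5) by direct computation with the orthogonal idempotents $(x,x)$ and the $\kk$-duality $D=\Hom_\kk(-,\kk)$, isolating the global-dimension bound (6) as the single genuinely homological point. First I would record that the $(x,x)$, $x\in P$, form a complete set of orthogonal idempotents with $\sum_x (x,x)=1$, so that $A_\kk(P)=\bigoplus_x P_x$ as a right module. Each $P_x$ is then a direct summand of the regular representation, hence projective, and dually each $I_x=D\big(A_\kk(P)\cdot(x,x)\big)$ is injective, since $D$ is an exact contravariant equivalence between left and right modules that carries projectives to injectives. Statement (1) is immediate because $S_x$ is one-dimensional.

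For (2) and (3) I would use the standard identification $\Hom_{A_\kk(P)}(P_x,P_y)\cong (y,y)\,A_\kk(P)\,(x,x)$, whose right-hand corner is spanned by the products $(y,y)(a,b)(x,x)$; these vanish unless $(a,b)=(y,x)$ and $y\leqslant x$. This gives at once the Hom-formula of (3) and, taking $y=x$, shows $\End_{A_\kk(P)}(P_x)\cong\kk\,(x,x)\cong\kk$ is local, so that $P_x$ is indecomposable; since the top of $P_x$ is $S_x$, the surjection $P_x\twoheadrightarrow S_x$ is a projective cover. Applying $D$ transports everything to the injectives: it yields $\Hom_{A_\kk(P)}(I_x,I_y)\cong (y,y)\,A_\kk(P)\,(x,x)$ as well, giving the same condition $y\leqslant x$, the same local endomorphism ring, and $S_x=\operatorname{soc}(I_x)$, so $I_x$ is the injective envelope of $S_x$.

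Statement (4) follows from an explicit description: a nonzero morphism $P_x\to P_y$ is, up to scalar, determined by $(x,x)\mapsto(y,x)$, and by right-linearity it sends each basis element $(x,b)$ to $(y,b)$; distinct basis vectors go to distinct basis vectors, so the map is injective. Statement (5) is the image of (4) under $D$: a nonzero morphism $I_x\to I_y$ is the $\kk$-dual of a nonzero morphism of projective left modules, which is a monomorphism by the left-handed version of (4), and $D$ turns monomorphisms into epimorphisms.

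The main obstacle is (6). Here I would prove, for each $x$, that $\operatorname{pd}(S_x)\leqslant h(x)$, where $h(x)$ denotes the length of the longest chain in $P$ with least element $x$, by induction on $h(x)$. When $x$ is maximal one has $P_x=S_x$ projective and $h(x)=0$. In general I would use the short exact sequence $0\to\operatorname{rad}(P_x)\to P_x\to S_x\to 0$: the module $\operatorname{rad}(P_x)$ admits a filtration whose subquotients are the simples $S_b$ with $b>x$, and each such $b$ satisfies $h(b)\leqslant h(x)-1$ (a maximal upward chain from $b$ extends through $x<b$). The inductive hypothesis, together with the fact that the projective dimension of a filtered module is at most the maximum of the projective dimensions of its subquotients, gives $\operatorname{pd}(\operatorname{rad}(P_x))\leqslant h(x)-1$, hence $\operatorname{pd}(S_x)\leqslant h(x)$. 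Since the global dimension of a finite-dimensional algebra equals the supremum of the projective dimensions of its simple modules, the global dimension of $A_\kk(P)$ is bounded by $\max_x h(x)$, the length of the largest chain in $P$. The only delicate point I anticipate is the bookkeeping for the filtration of $\operatorname{rad}(P_x)$ and the verification of $h(b)\leqslant h(x)-1$, which is precisely where the chain-length bound enters.
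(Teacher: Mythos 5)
Your proposal is correct and follows essentially the same route as the paper: idempotent decomposition of the regular module, the identification $\Hom(P_x,P_y)\cong (y,y)A_\kk(P)(x,x)$, $\kk$-duality for the injective statements, and a downward induction on the poset for the global dimension bound. Your treatment of (6) via the filtration of $\operatorname{rad}(P_x)$ and the invariant $h(x)$ is a slightly more carefully bookkept version of the paper's induction on minimal projective resolutions, but it is the same argument in substance.
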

\begin{proof}
For simplicity, we denote $A_\kk(X)$ by $A$. The module $S_x$ is one-dimensional, so it is simple. Since $(x,x)$ is an idempotent, the module $P_x$ is projective. The module $I_x$ is the dual of a projective left $A$-module, so it is injective. Since the idempotent $(x,x)$ is clearly primitive, both modules are indecomposable. 

We have $\Hom_{A}\big(P_x, P_y\big)\cong (y,y)A (x,x)$. It is a one-dimensional vector space with basis $(y,x)$ when $y\leqslant x$ and it is zero otherwise. The left multiplication by $(y,x)$ induces an injective morphism from $P_x$ to $P_y$. Every morphism between $P_x$ and $P_y$ is a scalar multiple of this injective morphism, so it is either zero or injective. The proof for the injective modules is dual. 

It is easy to see that the radical of $P_x$ has basis the set of $(x,z)$ such that $x\leqslant z$ and $z\neq x$. In particular $P_x$ surjects onto $S_x$ so it is a projective cover. The first term of a minimal projective resolution of $S_x$ is $P_x$. The composition factors of the kernel are given by the $S_y$ for $x\leqslant y$ and $x\neq y$. Then, either the kernel is projective or it has a projective cover of the form $\bigoplus_{z'}P_{z}$ where $x\leqslant z'$ is a cover relation. By induction, we see that $S_x$ has a finite minimal projective resolution. Moreover, the global dimension of the category is bounded by the largest chain in $P$. 
\end{proof}
\begin{remark}
In the proof above, we see that the global dimension of $P$ is bounded by the largest chain in $P$. However, it is not true that the global dimension has an easy combinatorial description. There are finite posets $P$ such that the global dimension of their incidence algebra depends on the characteristic of the ground field (See Proposition $2.3$ of \cite{igusa} for more details). 
\end{remark}

Since the category $\Rpk$ is abelian, one can consider its bounded derived category. Note that the ground field does not play an important role in our work, so by a slight abuse of notation we denote by $D^b(P)$ the bounded derived category of the category $A_\kk(P)\modu$. If the category $\Rpk$ is abelian, the bounded derived category $D^b(P)$ is a more subtle object. It has a structure of \emph{triangulated category} (for more details about triangulated categories and derived category we refer to Chapters $3$ and $6$ of \cite{zimmermann}). 

Informally, the objects of the derived category are left and right bounded chain complexes of $A_\kk(P)$-modules (See below in the next Paragraph). The morphisms are more complicated, but by construction if two complexes are quasi-isomorphic, they become isomorphic in the derived category. There is a fully faithful embedding of the category $\Rpk$ into the derived category $D^b(P)$. It is obtained by sending a right $A_\kk(P)$-module to the complex consisting of this module in degree zero. 

In the rest of this article we will \emph{always} work in the derived category, so when we speak about the $A_\kk(P)$-module $M$, we have in mind the image of this module in the derived category by the fully faithful functor. Since any $A_\kk(P)$-module is quasi-isomorphic to a projective resolution, in the derived category, \emph{we can identify a module with one of its projective resolutions}. 

The derived category has a \emph{suspension functor}. Here we call it the \emph{shift} and denote it by $[1]$. Let $(C_i,d_i)$ be a chain complex of $A_\kk(P)$-modules. That is $C_i$ is a $A_\kk(P)$-module and $d_i : C_i\to C_{i-1}$ is a morphism of $A_\kk(P)$-modules such that $d_{i-1}\circ d_i = 0$ for every $i$. Then $[1]$ denotes the usual left shift. More generally, for $n\in \mathbb{Z}$, we let $(C[n])_k = C_{k-n}$. 

A Serre functor of the derived category $D^b(P)$ is a self triangulated equivalence $\mathbb{S}$ of $D^b(P)$ such that there is a bi-functorial ismorphism 
\[
\Hom(X,\Se(Y)) \cong \Hom(Y,X)^*, \forall X,Y\in D^b(P),
\]
where $-^*$ denotes the $\kk$-linear dual. Since the global dimension of the incidence algebra $A_{\kk}(P)$ is finite, it is classical that the derived Nakayama functor $-\otimes_{A_\kk(P)}^{L} A_\kk(P)^*$ gives a Serre functor for $D^{b}(A)$ (See Theorem $3.1$ \cite{keller} for example). Note that Serre functors are unique up to unique isomorphism, so sometimes we will abusively speak about `the' Serre functor. 

Let $x\in P$. It is easy to see that $\Se(P_x) \cong I_x$. Let $y\leqslant x$. Let $i : P_x \to P_y$ be the embedding. Then $\Se(i) : I_x \to I_y$ is the canonical epimorphism. So, in the facts, when we want to compute the value of $\Se$ on a module $M$, we start by finding a projective resolution $\mathcal{P}_M$ of $M$. Then, we replace every projective indecomposable module by the corresponding injective module and the embeddings are replaced by the epimorphisms.  

\begin{example}\label{ex_dyck}
Even in the very simple case of representation of finite poset, the image of a module by a Serre functor is not always a module. For example, let $P$ be the poset $\{0,1,2,3,4\}$ where the cover relations are $0<1$, $0<2$, $1<3$, $2<3$ and $3<4$. In other words, it is the poset with Hasse diagram: 
\[
\begin{tikzpicture}[scale=1]
\node (a) at (0,3) {$4$};
\node (b) at (0,2) {$3$};
\node (c) at (-1,1) {$1$};
\node (d) at (1,1) {$2$};
\node (e) at (0,0) {$0$};
\draw (e)--(d);
\draw (d)-- (b) -- (a);
\draw (e)--(c) -- (b);
 \end{tikzpicture}
\]
It is easy to see that $\Se(P_3)\cong I_3$. In the derived category $I_3\cong P_4\hookrightarrow P_0$. So $\Se^2(P_3)\cong I_4\to I_0 \cong \operatorname{Rad}(P_0)[1]$. In the derived category we have $\operatorname{Rad}(P_0)\cong P_3 \overset{f}{\to} P_1\oplus P_2$ where $f$ is the diagonal embedding of $P_3$ in $P_1\oplus P_2$. So $\Se^3(P_3)\cong (I_3 \overset{g}{\to}I_1\oplus I_2)[1]$ where $g$ is the diagonal projection of $I_3$ onto $I_1\oplus I_2$. The kernel of $g$ is isomorphic to $S_3$ and the cokernel is isomorphic to $S_0$. So $\Se^3(P_3)$ has homology concentrated in two degrees. Since it is the image of an indecomposable object by an equivalence, it is an indecomposable object. This implies that $\Se^3(P_3)$ is an indecomposable object of $D^b(P)$ which is not isomorphic to a shift of a module. 
\end{example}
\begin{definition}\label{def_CY}
Let $A$ be a finite dimensional $\kk$-algebra with finite global dimension. Then $D^b(A)$ is $(m,n)$-fractionally Calabi-Yau if there is an isomorphism of functors $\Se^n\cong [m]$. 
\end{definition}
This means that for every object $X$ in $D^b(A)$ there is an isomorphism $\phi_X : \Se^n(X)\to X[m]$ and for every morphism $f : X\to Y$ in $D^b(A)$, the  following diagram is commutative
\[
\xymatrix{
\Se^n(X)\ar[r]^{\phi_X}\ar[d]^{\Se^n(f)} & X[m]\ar[d]^{f[m]}\\
\Se^n(Y)\ar[r]^{\phi_Y}& Y[m]. 
}
\]
\begin{example}\label{ex_cy}
Let $Q$ be a finite connected acyclic quiver. It is well-known that $\kk Q$ is Fractionally Calabi-Yau if and only if $Q$ is an orientation of a Dynkin diagram of type $A,D,E$ (see for example \cite{roosmalen}).
\end{example}

The particular case of incidence algebras of finite posets has recently received attention in the work of several authors (See for example \cite{chapoton_derived_tamari,webb,lenzing}). If $P$ is a finite poset, then it is easy to see that, up to a sign, the linear map induced by the Serre functor on the Grothendieck group of $D^b(P)$ is the so-called \emph{Coxeter} transformation. Its matrix in the basis of the simple modules is called the Coxeter matrix. It has a simple description as $C=I (I^{-1})^t$ where $I$ is the incidence matrix of $P$. There is also an abundance of studies on Coxeter transformations in the literature (See for example \cite{chapoton_coxeter_tamari,ladkani_coxeter,yildirim}).

In general, it is not easy to check that the derived category is fractionally Calabi-Yau, or that the Coxeter matrix has finite order because the classical families of modules (or classical basis of the Grothendieck groups) are not preserved by the Serre functor (see Example \ref{ex_dyck}).

Looking one last time at Example \ref{ex_dyck}, we see that even in a simple case, understanding `by brute force' the Serre functor is not so easy. Nevertheless, it is easy to check that $D^b(P)$ is triangulated equivalent to $D^b(\kk D_5)$ and it is well-known that this category is $(6,8)$-fractionally Calabi-Yau. In other terms, finding a `good' description of our triangulated category as bounded derived category of a well chosen finite dimensional algebra, or abelian category, can simplify a lot the understanding of the Serre functors. 

The situation is particularly well understood when one can find a hereditary such category: for a finite dimensional path algebra, or more generally for an abelian hereditary category, we know precisely when their bounded derived category is fractionally Calabi-Yau (See \cite{roosmalen,lenzing_hereditary}). However, this cannot be used in order to solve Conjecture \ref{conj_A}.

\begin{proposition}
If $n\geqslant 4$, then $D^b(\Tam_n)$ is not triangulated equivalent to $D^b(\mathcal{H})$ for an abelian hereditary category $\mathcal{H}$. 
\end{proposition}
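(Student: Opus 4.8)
The plan is to argue by contradiction, combining the classification of hereditary categories admitting a tilting object with the periodicity of the Coxeter transformation and two numerical invariants of the triangulated category. First I would record that $D^b(\Tam_n)=D^b\big(A_\kk(\Tam_n)\big)$ always admits a tilting object, namely the regular module $A_\kk(\Tam_n)$ placed in degree zero. Hence a triangulated equivalence $D^b(\Tam_n)\simeq D^b(\mathcal H)$ would endow $D^b(\mathcal H)$ with a tilting object; moreover $\mathcal H$ is $\Hom$-finite and connected, the latter because $\Tam_n$ is a connected poset so that $A_\kk(\Tam_n)$ is a connected algebra. Happel's classification of hereditary abelian categories with a tilting object (see \cite{lenzing_hereditary,roosmalen}) then forces $\mathcal H$ to be equivalent either to $\kk Q\modu$ for a connected acyclic quiver $Q$, or to $\operatorname{coh}(\mathbb X)$ for a weighted projective line $\mathbb X$. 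The whole problem thus reduces to excluding these two families.

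The key constraint is the periodicity of the Coxeter transformation. Because the Serre functor is unique up to unique isomorphism and is intertwined by any triangulated equivalence, the Coxeter transformations of $D^b(\Tam_n)$ and $D^b(\mathcal H)$ are conjugate, hence of the same finite order; by Chapoton's theorem the Coxeter transformation of $\Tam_n$ is periodic of order $2n+2$ (\cite{chapoton_coxeter_tamari}), so that of $\mathcal H$ is periodic as well. For $\mathcal H=\kk Q\modu$ this forces $Q$ to be of Dynkin type, and for $\mathcal H=\operatorname{coh}(\mathbb X)$ it forces $\mathbb X$ to be of tubular type (\cite{roosmalen,lenzing}). I would then derive a contradiction in each case with a different invariant. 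In the Dynkin case $\kk Q$ has finite representation type, so $D^b(\kk Q)$ has only finitely many indecomposable objects up to shift; but $A_\kk(\Tam_n)$ is wild for $n\geqslant 4$ (\cite{chapoton_rognerud_wildness}), hence has infinitely many pairwise non-isomorphic indecomposable modules, and distinct indecomposable modules remain non-isomorphic up to shift in $D^b(\Tam_n)$ — a contradiction. In the tubular case $\operatorname{rk} K_0(\mathcal H)\leqslant 10$, since the four tubular weight types $(2,2,2,2),(3,3,3),(2,4,4),(2,3,6)$ give ranks $6,8,9,10$; whereas $\operatorname{rk} K_0\big(D^b(\Tam_n)\big)$ equals the number $C_n$ of elements of $\Tam_n$, and $C_n\geqslant C_4=14>10$ for $n\geqslant 4$ — again a contradiction. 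This exhausts the two possibilities and proves the proposition.

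The main obstacle is organizing the reduction so that each hereditary family is killed by the \emph{right} invariant, since no single invariant does both jobs. Periodicity of the Coxeter transformation alone separates $\Tam_n$ neither from Dynkin quivers nor from tubular weighted projective lines (the latter being, like $\Tam_n$ conjecturally, fractionally Calabi--Yau). Likewise neither invariant works uniformly: Dynkin quivers of type $A$ have arbitrarily large $K_0$-rank, so the rank bound cannot exclude them, while tubular categories are tame and therefore have infinitely many indecomposables up to shift, so the cardinality argument cannot exclude them. The argument therefore genuinely hinges on first using periodicity to cut the Happel classification down to exactly the Dynkin and tubular cases, and only then applying the cardinality of the set of indecomposables up to shift to the former and the $K_0$-rank bound to the latter. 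The one point I would check with care is that all three ingredients — the order of the Coxeter transformation, the rank of the Grothendieck group, and the number of indecomposables up to shift — are genuinely invariant under triangulated equivalence, which in each case follows from standard properties of the Serre functor and of $K_0$.
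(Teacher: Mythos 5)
Your argument is correct in outline but takes a genuinely different route from the paper. The paper's proof is much shorter and runs the logic the other way around: it quotes the fact (Happel--Zacharia, also Proposition~2.3 of \cite{chapoton_rognerud_wildness}) that a derived equivalence $D^b(\Tam_n)\cong D^b(\mathcal H)$ forces the representation type of $A_\kk(\Tam_n)$ to be dominated by that of $\mathcal H$, so $\mathcal H$ would be wild; on the other hand, by van Roosmalen's classification \cite{roosmalen}, a hereditary category with fractionally Calabi--Yau derived category is of finite or tame type, and $D^b(\Tam_n)$ \emph{is} fractionally Calabi--Yau by the main theorem of the paper. So the paper's proof trades on a forward reference to Conjecture~\ref{conj_A} (harmless, since the proposition is not used in its proof), whereas yours is self-contained: you only use Chapoton's earlier periodicity theorem for the Coxeter transformation, which is a real advantage. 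What your version costs is a heavier classification input and two case analyses. Two points you should nail down: first, Happel's classification of connected $\Hom$-finite hereditary abelian categories with a tilting object is stated over an algebraically closed field, while the paper works over an arbitrary field $\kk$; over general base fields you need the Happel--Reiten version, and then the Dynkin case includes species (still finite type, so your counting argument survives) and the tubular case is replaced by tubular exceptional curves, for which the bound $\operatorname{rk}K_0\leqslant 10$ has to be rechecked. Second, the passage from ``$D^b(\mathcal H)$ is triangle equivalent to $D^b(A)$ for a finite-dimensional algebra $A$'' to ``$\mathcal H$ has a tilting object'' is true but not tautological (the image of $A$ is a tilting complex, a priori a direct sum of shifts of objects of $\mathcal H$, not an object of $\mathcal H$); cite it explicitly rather than folding it into the appeal to Happel. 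With those two references supplied, your proof is complete and arguably more informative than the paper's, since it pinpoints exactly which hereditary candidates are excluded by which invariant.
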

\begin{proof}
If there is a hereditary abelian category $\mathcal{H}$ such that $D^b(\Tam_n)\cong D^b(\mathcal{H})$, then the representation type of $A_\kk(\Tam_n)$ is dominated\footnote{Here dominated means smaller in the order where (finite) $\leq$ (tame) $\leq$ (wild).} by the representation type of $\mathcal{H}$ (see Section $2$ of \cite{HZ} or Proposition $2.3$ of \cite{chapoton_rognerud_wildness}). Looking at the classification, we see that if $D^b(\mathcal{H})$ is fractionally Calabi-Yau derived category, then $\mathcal{H}$ has a finite or a tame representation type. But, when $n\geqslant 4$, the incidence algebras of the Tamari lattices have a wild representation type (Section $3$ of  \cite{chapoton_rognerud_wildness}).
\end{proof}

 \section{Posets with a unique maximal or a unique minimal element}\label{poset_min}
 We consider a finite poset $X$ and a field $\kk$. The aim of this section is to prove the following result. 
\begin{theorem}\label{reduction}
Let $X$ be a finite poset with a unique minimal element or a unique maximal element. If there are integers $m$ and $n$ such that $\Se(P)^n \cong P[m]$ in $D^b(X)$ for all projective indecomposable modules of $X$, then the category $D^b(X)$ is $(m,n)$-fractionally Calabi-Yau.
\end{theorem}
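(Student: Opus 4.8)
The plan is to upgrade the object-wise isomorphisms $\Se^n(P_x)\cong P_x[m]$ into an isomorphism of functors $\Se^n\cong[m]$. Set $\Phi:=[-m]\circ\Se^n$, a triangulated autoequivalence of $D^b(X)$ satisfying $\Phi(P_x)\cong P_x$ for every $x\in X$; by Definition \ref{def_CY} it suffices to produce a natural isomorphism $\Phi\cong\mathrm{Id}$. Since $A_\kk(X)$ has finite global dimension (Proposition \ref{basic_module}), the inclusion $K^b(\operatorname{proj}A_\kk(X))\hookrightarrow D^b(X)$ is an equivalence and every object is represented by a bounded complex of projectives. As $\Phi$ is additive and sends each $P_x$ to an isomorphic copy, it restricts to an autoequivalence of the additive category $\operatorname{proj}A_\kk(X)$ placed in homological degree $0$.

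First I would rigidify $\Phi$ on the projectives. Pick isomorphisms $\theta_x\colon\Phi(P_x)\xrightarrow{\sim}P_x$. For $y\leqslant x$ let $\iota_{y,x}\colon P_x\to P_y$ be the canonical embedding, which spans the one-dimensional space $\Hom(P_x,P_y)$ by Proposition \ref{basic_module}(3); then $\theta_y\circ\Phi(\iota_{y,x})\circ\theta_x^{-1}=\lambda_{y,x}\,\iota_{y,x}$ for a unique $\lambda_{y,x}\in\kk^\times$. Functoriality of $\Phi$ together with the relation $\iota_{z,y}\circ\iota_{y,x}=\iota_{z,x}$ forces the multiplicativity $\lambda_{z,x}=\lambda_{z,y}\lambda_{y,x}$ along chains. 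Rescaling $\theta_x\mapsto\mu_x\theta_x$ turns $\lambda_{y,x}$ into $\mu_y\mu_x^{-1}\lambda_{y,x}$, so the $\theta_x$ assemble into a natural isomorphism $\Phi|_{\operatorname{proj}}\Rightarrow\mathrm{Id}$ as soon as the equations $\lambda_{y,x}=\mu_y^{-1}\mu_x$ can be solved. This is exactly where the hypothesis on $X$ enters: when $X$ has a unique maximal element $\hat1$ I would set $\mu_x:=\lambda_{x,\hat1}^{-1}$ and deduce $\mu_y^{-1}\mu_x=\lambda_{y,\hat1}\lambda_{x,\hat1}^{-1}=\lambda_{y,x}$ from multiplicativity; the unique-minimal-element case is dual. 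On a poset whose comparability graph has nontrivial cycles and no such global comparison point (a crown, say) the cocycle $\lambda$ need not be a coboundary, so the hypothesis cannot simply be dropped.

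It then remains to propagate the natural isomorphism $\eta$ from $\operatorname{proj}A_\kk(X)$ to all of $K^b(\operatorname{proj}A_\kk(X))$, and this is the step I expect to be the main obstacle, because a triangulated functor does not act term by term on complexes and natural transformations do not extend canonically along mapping cones. I would argue by induction on the number of nonzero terms of a complex $P^\bullet$ of projectives, using the stupid-truncation triangle $\sigma_{>k}P^\bullet\to P^\bullet\to\sigma_{\leqslant k}P^\bullet\to\sigma_{>k}P^\bullet[1]$, whose two outer terms are shorter and hence already carry natural isomorphisms to their $\Phi$-images that are compatible with the connecting morphism. The morphism-of-triangles axiom then yields an isomorphism $\Phi(P^\bullet)\xrightarrow{\sim}P^\bullet$ completing them, and this completion is unique: the ambiguity is a torsor under a subquotient of $\Hom_{D^b(X)}\big(\sigma_{>k}P^\bullet,\sigma_{\leqslant k}P^\bullet[-1]\big)$, which vanishes for degree reasons since the two truncations occupy disjoint ranges of homological degrees (projectivity of the terms, giving $\Hom(P_x,P_y[i])=0$ for $i\neq0$, is what keeps all such cross-terms under control). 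Uniqueness of the completion forces compatibility with composition, so the maps $\eta_{P^\bullet}$ assemble into a natural transformation $\Phi\Rightarrow\mathrm{Id}$; being an isomorphism on the generators $P_x$, it is an isomorphism on every object. Hence $\Phi\cong\mathrm{Id}$, that is $\Se^n\cong[m]$, and $D^b(X)$ is $(m,n)$-fractionally Calabi-Yau.
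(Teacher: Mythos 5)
Your first step is sound and is in fact equivalent to what the paper does at that stage: the multiplicative cocycle $\lambda_{y,x}$ on comparable pairs is exactly the data of a $\kk$-algebra automorphism of $A_\kk(X)$ fixing the idempotents $e_x$, and your observation that a unique maximal (or minimal) element lets you write $\lambda_{y,x}=\mu_y^{-1}\mu_x$ is precisely Stanley's lemma that such an automorphism is inner (Lemma \ref{stanley}). The genuine gap is in the second step, the propagation from $\operatorname{proj}A_\kk(X)$ to $K^b(\operatorname{proj}A_\kk(X))$, and it is not a minor one. Your uniqueness claim rests on the vanishing of (a subquotient of) $\Hom\big(\sigma_{>k}P^\bullet,\sigma_{\leqslant k}P^\bullet[-1]\big)$, but the degree ranges are \emph{not} disjoint after the shift: $\sigma_{>k}P^\bullet$ lives in degrees $\geqslant k+1$ and $\sigma_{\leqslant k}P^\bullet[-1]$ in degrees $\leqslant k+1$, so they share degree $k+1$ and the group has a potentially nonzero component $\Hom(P^{k+1},P^k)$. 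Concretely, for a two-term complex $P^0\to P^1$ the completions of the middle map differ by chain maps of the form $P^\bullet\twoheadrightarrow P^0\xrightarrow{e}\Phi(P^\bullet)$, which need not be null-homotopic. Without uniqueness of the completion, the compatibility of your maps $\eta_{P^\bullet}$ with arbitrary morphisms of complexes does not follow, and the family does not assemble into a natural transformation. This is the classical non-functoriality-of-cones obstruction: an isomorphism of triangulated functors cannot in general be reconstructed from its restriction to a generating additive subcategory.

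The paper sidesteps this entirely by working one level up, in the derived category of $A$-$A$-bimodules, following Herschend--Iyama. From the object-wise hypothesis one gets $\Se^n(A)\cong A[m]$ as right modules, which forces the complex of bimodules $D(A)^{\otimes^L n}[-m]$ to have homology concentrated in degree $0$ and isomorphic, as a bimodule, to a twist ${}_{\phi}A_1$ by an automorphism $\phi$ fixing the $e_x$; an isomorphism in $D^b(A\otimes_\kk A^{op})$ then automatically induces an isomorphism of the associated tensor functors, and Stanley's lemma finishes the job. If you want to salvage your direct approach you would need some such enhancement (bimodules, or a dg lift of $\Phi$); the purely triangulated induction on truncations cannot close the naturality gap.
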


For simplicity, in all this section we denote by $A$ the incidence algebra of $X$ over the field $\kk$. 

Let $\phi$ be a $\kk$-algebra endomorphism of $A$. Then, one can twist the regular left action of $A$ on itself by $\phi$ : $a\cdot x := \phi(a)x$ for $a,x\in A$. We denote by $\,_{\phi} A_1$ the $A$-$A$-bimodule $A$ where the left action is twisted by $\phi$ and the right action is given by the right multiplication.

\begin{proposition}\label{iyama}
Let $X$ be a finite poset and $A$ its incidence algebra over a field $\kk$. Then, the following are equivalent:
\begin{enumerate}
\item There are integers $m$ and $n$ such that $\Se^m(P_x) \cong P_x[n]$ in $D^b(A)$ for all $x\in X$
\item There is a $\kk$-algebra automorphism $\phi$ of $A$ such that $\phi(e_x)=e_x$ for all $x\in X$ and such that 
\[\Se^m \cong -\otimes_{A}^{L}\,_{\phi}A_1 \circ [n].\]
\end{enumerate}
\end{proposition}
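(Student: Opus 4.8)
The plan is to prove the two implications separately. The implication $(2)\Rightarrow(1)$ is a direct computation, while $(1)\Rightarrow(2)$ is the substantial direction and amounts to recognizing a standard derived self-equivalence as an honest twisted bimodule.

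For $(2)\Rightarrow(1)$ I would simply evaluate the functor on the indecomposable projectives. Since $P_x = e_x A$ is projective, the derived tensor product with ${}_\phi A_1$ is an ordinary tensor product, and the map $e_x a\otimes b\mapsto \phi(e_x a)b$ identifies $P_x\otimes_A {}_\phi A_1$ with the right module $\phi(e_x)A$. Using the hypothesis $\phi(e_x)=e_x$ this is exactly $P_x$, so condition $(2)$ gives $\Se^m(P_x)\cong P_x[n]$ for every $x$, which is $(1)$.

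For $(1)\Rightarrow(2)$ I would first make $\Se$ explicit as a standard functor: by the recollection in the previous section $\Se\cong -\otimes_A^L A^*$ with $A^*=\Hom_\kk(A,\kk)$, and since $\operatorname{gldim}A<\infty$ this dualizing bimodule is invertible in the derived category of $A$-$A$-bimodules. Hence $G:=\Se^m\circ[-n]\cong -\otimes_A^L T$ for the two-sided tilting complex $T:=(A^*)^{\otimes_A^L m}[-n]$. The hypothesis says $G(P_x)\cong P_x$ for all $x$; summing over $x$ and using $A_A=\bigoplus_x P_x$ gives $T\cong G(A)\cong A$ in $D^b(A)$, so the underlying right-module complex of $T$ has homology concentrated in degree $0$ and isomorphic to $A_A$. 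Because the homology of a bimodule complex may be computed after forgetting the left action, $T$ has bimodule homology concentrated in degree $0$; a complex with homology in a single degree is isomorphic in the derived category to that homology, so $T\cong M:=H^0(T)$ in $D^b(A^e)$, and $M$ is an invertible bimodule with $M_A\cong A_A$. The left action of $A$ on $M\cong A_A$ commutes with the right action, hence is left multiplication by $\phi(a)$ for a ring endomorphism $\phi\colon A\to\operatorname{End}(A_A)\cong A$, so $M\cong {}_\phi A_1$, and invertibility forces $\phi\in\operatorname{Aut}(A)$.

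It remains to arrange $\phi(e_x)=e_x$. From the computation in the first direction $G(P_x)\cong \phi(e_x)A$, which by hypothesis is isomorphic to $P_x=e_x A$; since distinct vertices of the poset give pairwise non-isomorphic indecomposable projectives, each idempotent $\phi(e_x)$ is conjugate to $e_x$. Thus $\{e_x\}$ and $\{\phi(e_x)\}$ are two complete sets of orthogonal primitive idempotents with $\phi(e_x)A\cong e_x A$, so there is a unit $u\in A$ with $u\,\phi(e_x)\,u^{-1}=e_x$ for all $x$ simultaneously. Replacing $\phi$ by the inner-conjugate automorphism $\phi'=\operatorname{conj}_u\circ\phi$ changes ${}_\phi A_1$ only up to bimodule isomorphism and yields $\phi'(e_x)=e_x$, giving $\Se^m\cong -\otimes_A^L {}_{\phi'}A_1\circ[n]$. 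I expect the main obstacle to be the passage in the third paragraph from a two-sided tilting complex whose one-sided restriction is free of rank one to an honest invertible bimodule in degree zero; the decisive point is that the bimodule homology is genuinely concentrated in a single degree, which one verifies by forgetting the left action.
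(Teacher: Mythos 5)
Your proposal is correct and follows essentially the same route as the paper: both directions hinge on identifying the two-sided tilting complex $(A^*)^{\otimes_A^L m}[-n]$ with a twisted bimodule ${}_{\phi}A_1$ (the Herschend--Iyama argument), and then conjugating $\phi$ to fix the idempotents. The only cosmetic difference is in that last step: you invoke the general simultaneous-conjugacy statement for two complete sets of orthogonal primitive idempotents with isomorphic projectives, whereas the paper writes down the explicit unit $g=\sum_x e_x\phi(e_x)$, using that $A/J(A)$ is commutative to see $1-g\in J(A)$; both are valid.
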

\begin{proof}
Let $e$ be an idempotent in $A$. The $A$-module morphism $\theta : eA \otimes_A \,_{\phi} A \to \phi(e)A$ defined by $\theta(\sum_{i=0}^{n}e a_i\otimes b_i) = \sum_{i=0}^{n}\phi(ea_i)b_i$ is an isomorphism of right $A$-modules with inverse the morphism $\psi$ that sends $\phi(e)b$ to $e\otimes b$. As consequence, we see that $2$ implies $1$. 

Conversely, we start by using Lemmas $4.1$, $4.2$ and Proposition $4.3$ of \cite{iyama}. For the convenience of the reader we recall the main arguments.

By additivity of the derived Nakayama functor, hypothesis $1$ implies that
\[
\Se^m (A) \cong A[n] \hbox{ in $D^b(A)$}.
\]

In other terms, the complex of $A$-$A$-bimodules $D(A)^{\otimes^{L} m} := \overbrace{D(A)\otimes^{L}\cdots \otimes^{L}D(A)}^{m}$ is isomorphic in $D^b(A)$ to $A[n]$. Let $M = D(A)^{\otimes^{L} m}[-n]$. Then $M$ is isomorphic to $A$ in $D^b(A)$. It means that $M$ has its homology concentrated in degree $0$ and that the $A$-$A$-bimodule $H^{0}(M)$ is isomorphic to $A$ as a right-module. It is classical that the second condition is equivalent to the existence of a $\kk$-algebra endomorphism $\phi$ such that $H^{0}(M)\cong \,_{\phi}A_1$ as $A$-$A$-bimodules (See e.g. Lemma $4.1$ of \cite{iyama}). In other terms, the complex of bimodules $M$ has its homology concentrated in degree $0$ and its homology is isomorphic to $\,_{\phi}A_1$. So, we have that $M \cong \,_{\phi}A_1$ in $D^b(A\otimes_{\kk} A^{op})$ (this is the statement of Lemma $4.2$ of \cite{iyama}). This isomorphism of complexes of bimodules induces an isomorphism of functors 
\[\Se^{m} \cong -\otimes_{A}^{L}\,_{\phi}A_1\circ [n].\]  
Since $\Se^m$ is an autofunctor, we see that $\phi$ is a $\kk$-algebra automorphism. In other terms, the $\kk$-algebra $A$ is a twisted fractionally Calabi-Yau algebra in the sense of Herschend and Iyama (this it the statement of Proposition $4.3$ of \cite{iyama}).

By hypothesis, if $x\in X$, we have $\Se^{m} (e_x A) \cong e_xA[n]$. Since $\,_{\phi}A\otimes_A e_x A\cong \phi(e_x)A[n]$, we see that $e_x A \cong \phi(e_x)A$ as right $A$-modules. In other words, the idempotents $e_x$ and $\phi(e_x)$ are conjugated. Since $A/J(A)$ is commutative, the idempotents $e_x$ and $\phi(e_x)$ have the same image in the quotient. It follows that $e_x -\phi(e_x)\in J(A)$.

The element $g=\sum_{x\in X}e_x\phi(e_x)$ is invertible since $1-g\in J(A)$. Moreover, $g\phi(e_x)g^{-1} =e_x$ for all $x\in X$. Let $\phi' = c_{g} \circ \phi $ where $c_{g}$ denotes the conjugation by $g$. Then, $\phi'$ is a $\kk$-algebra automorphism of $A$ such that $\phi'(e_x)=e_x$ for all $x\in X$. Finally, since $\,_{c_{g}}A_1 \cong \,_1 A_1$ as bimodules, there is an isomorphism of functors between $-\otimes_A^{L}\,_{\phi}A_1$ and $-\otimes_{A}^{L}\,_{\phi'}A_1$.

\end{proof}
\begin{lemma}[Stanley \cite{stanley}]\label{stanley}
Let $X$ be a finite poset with a unique minimal or a unique maximal element. Let $\phi$ be a $\kk$-algebra automorphism of $A$ such that $\phi(e_x)=e_x$ for all $x\in X$. Then, $\phi$ is an inner automorphism.
\end{lemma}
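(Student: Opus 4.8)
The plan is to show that the hypothesis $\phi(e_x)=e_x$ forces $\phi$ to be ``diagonal'' on the natural basis, so that the problem reduces to recognizing a multiplicative coboundary. First I would observe that for any comparable pair $a\leqslant b$ we have $(a,b)=e_a\,(a,b)\,e_b$, whence $\phi\big((a,b)\big)=e_a\,\phi\big((a,b)\big)\,e_b\in e_aAe_b$. Since the basis of $A$ is indexed by intervals, the space $e_aAe_b$ is the one-dimensional line $\kk\,(a,b)$ when $a\leqslant b$. Therefore $\phi\big((a,b)\big)=\lambda_{a,b}\,(a,b)$ for some scalar $\lambda_{a,b}$, which is nonzero because $\phi$ is injective, and with $\lambda_{a,a}=1$ since $\phi(e_a)=e_a$.

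Next I would extract the compatibility that multiplicativity of $\phi$ imposes on these scalars. Applying $\phi$ to the relation $(a,b)(b,c)=(a,c)$ for $a\leqslant b\leqslant c$ yields $\lambda_{a,b}\lambda_{b,c}=\lambda_{a,c}$; that is, the family $(\lambda_{a,b})$ is a multiplicative $1$-cocycle on the comparability relation of $X$. The goal then becomes to write this cocycle as a coboundary, i.e.\ to find nonzero scalars $(\mu_x)_{x\in X}$ with $\lambda_{a,b}=\mu_a\mu_b^{-1}$ for all $a\leqslant b$. Once such $\mu_x$ are found, the element $u=\sum_{x\in X}\mu_x e_x$ is invertible (its components are units), and a direct check gives $u\,(a,b)\,u^{-1}=\mu_a\mu_b^{-1}(a,b)=\lambda_{a,b}(a,b)=\phi\big((a,b)\big)$; since $\phi$ and conjugation by $u$ agree on a basis, they coincide, so $\phi$ is inner.

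This is exactly the point where the hypothesis on $X$ enters. If $X$ has a unique minimal element $\hat 0$, then every $x$ satisfies $\hat 0\leqslant x$, so I can set $\mu_x:=\lambda_{\hat 0,x}^{-1}$; the cocycle relation applied to $\hat 0\leqslant a\leqslant b$ reads $\lambda_{\hat 0,a}\lambda_{a,b}=\lambda_{\hat 0,b}$, which rearranges to $\mu_a\mu_b^{-1}=\lambda_{\hat 0,b}\lambda_{\hat 0,a}^{-1}=\lambda_{a,b}$, as required. The case of a unique maximal element $\hat 1$ is symmetric, taking $\mu_x:=\lambda_{x,\hat 1}$.

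The main obstacle is precisely this coboundary step. The cocycle relation only constrains $\lambda$ along chains, so without a distinguished element comparable to all of $X$ one cannot in general ``integrate'' the local data into a single global function $\mu$ (equivalently, the first cohomology of the order complex of $X$ need not vanish, and there can be non-inner diagonal automorphisms). The existence of a unique minimum or maximum makes the order complex a cone, hence contractible, and it is this feature that guarantees the cocycle is a coboundary and lets me construct $u$ explicitly as above.
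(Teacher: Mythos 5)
Your proof is correct and is essentially the paper's argument: both reduce to the observation that $\phi$ acts diagonally on the basis $(a,b)$, and both conjugate by the element $u=\sum_x \mu_x e_x$ built from the scalars attached to the intervals $[\hat 0,x]$ (the paper writes $u=e_0+\sum_{x\neq 0}k_{0,x}e_x$ and checks $\phi\circ c_u=\mathrm{id}$ via the factorization $(0,y)=(0,x)(x,y)$, which is your cocycle relation). The cohomological framing is a nice conceptual gloss but the computation is the same.
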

\begin{proof}
We only give a proof when $X$ has a unique minimal element, the other case is similar. 

We denote the minimal element of $X$ by $0$. The vector space $e_x A e_y$ is $\{0\}$ when $x\nleqslant y$ and is generated by $(x,y)$ when $x\leqslant y$. For every basis element $(x,y)$ there is a scalar $k_{x,y}\in \kk^{*}$ such that $\phi(x,y)= k_{x,y}(x,y)$. 

Let $u = e_0 + \sum_{x\neq 0}k_{0,x}e_x$. Then $u$ is an invertible element with inverse $u^{-1} = e_0 + \sum_{x\neq 0} k_{0,x}^{-1}e_x$. It is easy to see that $u (0,x) u^{-1} = k_{0,x}^{-1} (0,x)$. In other words, $\phi\circ c_u (0,x) = (0,x)$ and $\phi\circ c_u(e_x) = e_x$ for all $x\in X$. 

For $0\neq x < y$, we have $(0,y) = (0,x)\cdot (x,y)$. This implies that an automorphism that fixes all the basis elements $(0,x)$ and all the idempotents $e_x$ is the identity. 
\end{proof}
\begin{proof}[Proof of Theorem \ref{reduction}.]
By Proposition \ref{iyama}, under the hypothesis we have $\Se^{m} \cong - \otimes_{A}^{L}\,_{\phi}A_1$ where $\phi$ is a $\kk$-algebra automorphism that fixes the idempotents $e_x$. By Lemma \ref{stanley} any such automorphism is inner, so the induced bimodule $\,_{\phi}A_1$ is isomorphic to the regular bimodule. It shows that $\Se^{m} \cong [n]$. 
\end{proof}
 \section{Tamari lattices, interval-posets and noncrossing trees}\label{section_tam}
 
 \subsection{Tamari lattices and interval-posets}
Let $n\in \mathbb{N}$. A (planar) binary tree of size $n$ is a graph embedded in the plane which is a tree, has $n$ vertices with valence $3$, $n+ 2$ vertices with valence $1$ and a distinguished univalent vertex called the \emph{root}. The other vertices of valence $1$ are called the \emph{leaves} of the tree. For the rest of the paper, when we speak about vertices of the tree, we have in mind the trivalent vertices. The planar binary trees are pictured with their root at the bottom and their leaves at the top.

Let $\Tam_n$ be the set of all binary trees with $n$ vertices. It is well-known that the cardinality of this set is the Catalan number $c_n = \frac{ 1 }{n+1} {{2n}\choose{n}}$. 

There is a partial order relation on $\Tam_n$ which was introduced by Tamari in \cite{tamari}. It is defined as the transitive closure of the following covering relations. A tree $T$ is covered by a tree $S$ if they only differ in some neighborhood of an edge by replacing the configuration $\begin{tikzpicture}[scale = 0.1]
        \draw (0,0)--(2,2); \draw (0,0)--(-2,2); \draw (1,1)--(0,2);
\end{tikzpicture}$ in $T$ by the configuration  $\begin{tikzpicture}[scale = 0.1]
        \draw (0,0)--(2,2); \draw (0,0)--(-2,2); \draw (-1,1)--(0,2);
\end{tikzpicture}$ in $S$. The poset $\Tam_n$ is known to be a lattice (See e.g. \cite{Huang_Tamari,tamari}). 

Our convention is maybe not the most classical: in general we use the left rotation of trees in order to describe the partial order of the Tamari lattices. We make this choice since it yields to a more natural description of some bijections. 

A binary \emph{search tree} is a binary tree labeled by integers such that if a vertex $x$ is labeled by $k$, then the vertices of the left subtree (resp. right subtree) of $x$ are labeled by integers  less than or equal (resp. superior) to $k$.

If $T$ is a binary tree with $n$ vertices, there is a unique labeling of the vertices by each of the integers $1,2,\cdots, n$ that makes it a binary search tree. This procedure is sometimes called the \emph{in-order traversal} of the tree (visit left subtree, root and then right subtree).

Using this labeling, a binary tree $T$ with $n$ vertices induces a partial order relation $\lhd$ on the set $\{1,\cdots, n\}$ by setting $i \lhd j$ if and only if the vertex labeled by $i$ is in the subtree with root $j$. 

When $(P,\lhd)$ is a partial order on the set $\{1,\cdots, n\}$, one can use the natural total ordering of the integers $1,\cdots, n$ that we denote by $<$ to split the relations $\lhd$ in two families. Let $1 \leqslant a < b \leqslant n$ be two integers. If $a \lhd b$ we say that the relation is \emph{increasing}.  On the other hand, if $b\lhd a$, we say that the relation is \emph{decreasing}. We denote by $\mathrm{Dec}(P)$ and $\mathrm{Inc}(P)$ the set of decreasing and increasing relations of $P$.  
 
If $T$ is a binary tree, we will implicitly see it as a binary search tree. Then, we have a useful characterization due to Châtel, Pilaud and Pons \cite{chatel_pilaud_pons} of the partial order of the Tamari lattice in terms of increasing or decreasing relations.
\begin{proposition}\label{char_tamari}
Let $T_1$ and $T_2$ be two binary trees. Then $T_1 \leqslant T_2$ in the Tamari lattice if and only if $\mathrm{Dec}(T_2) \subseteq \mathrm{Dec}(T_1)$ if and only if $\mathrm{Inc}(T_1) \subseteq \mathrm{Inc}(T_2)$. 
\end{proposition}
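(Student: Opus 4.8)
The plan is to reduce everything to the single equivalence $T_1 \leqslant T_2 \iff \mathrm{Dec}(T_2)\subseteq \mathrm{Dec}(T_1)$. The equivalence with $\mathrm{Inc}(T_1)\subseteq\mathrm{Inc}(T_2)$ then comes for free by symmetry: the left–right reflection of planar binary trees is an anti-automorphism of $\Tam_n$ which acts on labels by $i\mapsto n+1-i$ and exchanges $\mathrm{Dec}$ with $\mathrm{Inc}$, so applying the $\mathrm{Dec}$-statement to the reflected trees yields the $\mathrm{Inc}$-statement. Throughout I would use one elementary fact about the in-order (binary search tree) labeling, the \emph{interval property}: the subtree rooted at a vertex $v$ has vertex set an interval $[\ell,r]$ with $\ell\leqslant v\leqslant r$, its right subtree being $[v+1,r]$. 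Writing $r_T(a)$ for the largest label in the subtree of $a$ in $T$, this says exactly that, for $a<b$, the relation $\{a,b\}\in\mathrm{Dec}(T)$ (i.e. $b\lhd a$) holds iff $b\leqslant r_T(a)$; equivalently the set of decreasing partners of $a$ is the contiguous block $[a+1,r_T(a)]$.

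First I would prove the forward direction. It suffices to treat a single covering relation $T\lessdot S$ and pass to the transitive closure. Such a cover is a left rotation at a right-child edge: some vertex $y$ has right child $x$ in $T$ (with a left subtree $A$ below $y$ and $x$ carrying a left subtree $B$ and a right subtree $C$), and in $S$ the vertex $x$ is promoted above $y$. Inspecting the four affected blocks shows that, passing from $T$ to $S$, the only comparabilities that change are: the pair $\{y,x\}$ switches from decreasing to increasing; the decreasing relations $\{y,c\}$ with $c\in C$ are destroyed; and the increasing relations $\{a,x\}$ with $a\in A$ are created. Hence $\mathrm{Dec}(S)=\mathrm{Dec}(T)\setminus\big(\{\{y,x\}\}\cup\{\{y,c\}:c\in C\}\big)\subsetneq\mathrm{Dec}(T)$, so a left rotation only removes decreasing relations, and transitivity gives $T_1\leqslant T_2\Rightarrow\mathrm{Dec}(T_2)\subseteq\mathrm{Dec}(T_1)$.

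For the converse I would rotate $T_1$ upward while keeping it below $T_2$ at the level of decreasing sets, driven by two observations. \emph{(i)} If $(y,x)$ is a right-child edge of $T_1$ with $\{y,x\}\notin\mathrm{Dec}(T_2)$, then the left rotation at $(y,x)$ preserves the inclusion $\mathrm{Dec}(T_2)\subseteq\mathrm{Dec}(\cdot)$: by the forward computation it deletes only $\{y,x\}$ and the $\{y,c\}$ with $c\in C$, and since $\{y,x\}\notin\mathrm{Dec}(T_2)$ means $x>r_{T_2}(y)$, every $c>x$ also satisfies $c>r_{T_2}(y)$, hence $\{y,c\}\notin\mathrm{Dec}(T_2)$. \emph{(ii)} If $\mathrm{Dec}(T_2)\subsetneq\mathrm{Dec}(T_1)$, such an edge exists. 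To prove \emph{(ii)} I argue contrapositively: assume every right-child edge $(v,w)$ of $T_1$ has $\{v,w\}\in\mathrm{Dec}(T_2)$ and show $\mathrm{Dec}(T_1)\subseteq\mathrm{Dec}(T_2)$. Take $\{a,b\}\in\mathrm{Dec}(T_1)$ and induct on $b-a$. Let $u$ be the right child of $a$ in $T_1$, so $\{a,u\}\in\mathrm{Dec}(T_2)$ by hypothesis, i.e. $u\leqslant r_{T_2}(a)$. If $b\leqslant u$ the interval property in $T_2$ gives $b\leqslant r_{T_2}(a)$, so $\{a,b\}\in\mathrm{Dec}(T_2)$; if $b>u$ then $\{u,b\}\in\mathrm{Dec}(T_1)$ with $b-u<b-a$, so by induction $\{u,b\}\in\mathrm{Dec}(T_2)$, and combining $u\lhd a$ with $b\lhd u$ in $T_2$ together with transitivity of the tree order yields $b\lhd a$, i.e. $\{a,b\}\in\mathrm{Dec}(T_2)$. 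Granting \emph{(i)} and \emph{(ii)}: given $T_1\neq T_2$ with $\mathrm{Dec}(T_2)\subseteq\mathrm{Dec}(T_1)$, the inclusion is strict because $T\mapsto\mathrm{Dec}(T)$ is injective, so \emph{(ii)} produces an edge, \emph{(i)} lets me rotate it upward while maintaining $\mathrm{Dec}(T_2)\subseteq\mathrm{Dec}(\cdot)$, and each step strictly shrinks $\mathrm{Dec}$. After finitely many rotations I reach a tree $T'$ with $T_1\leqslant T'$ and $\mathrm{Dec}(T')=\mathrm{Dec}(T_2)$; injectivity forces $T'=T_2$, so $T_1\leqslant T_2$.

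Two auxiliary facts support this and I would isolate them. The interval property is immediate from the in-order labeling, and injectivity of $T\mapsto\mathrm{Dec}(T)$ can be obtained by reconstructing $T$ from the data $a\mapsto r_T(a)$ (the root is the smallest label $a$ with $r_T(a)=n$, after which one recurses on the complementary intervals) or simply cited from \cite{chatel_pilaud_pons}. The main obstacle is the converse, and within it step \emph{(ii)}: the non-obvious point is that a single discrepant decreasing relation can always be converted into an \emph{admissible} right-child rotation, which is exactly what the convexity of right subtrees in $T_2$ guarantees through the inductive argument. Once \emph{(i)} and \emph{(ii)} hold, the monovariant ``$|\mathrm{Dec}|$ strictly decreases'' makes termination and correctness routine.
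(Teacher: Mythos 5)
Your argument is correct, but it takes a genuinely different route from the paper: the paper proves this proposition by citation only (Proposition 40 and Remark 52 of \cite{chatel_pilaud_pons}, with a warning that the conventions differ), whereas you give a self-contained combinatorial proof. Your forward direction --- the exact bookkeeping of which comparabilities a single left rotation creates and destroys, namely $\Dec(S)=\Dec(T)\setminus\bigl(\{\{y,x\}\}\cup\{\{y,c\}:c\in C\}\bigr)$ --- is sound, as is the reduction of the $\Inc$-statement to the $\Dec$-statement via the left--right reflection, which does exchange the two local configurations defining the cover relation and hence is an anti-automorphism of $\Tam_n$. The real content is the converse, and your two lemmas carry it: the existence step \emph{(ii)} works because the induction on $b-a$ exploits the convexity of right subtrees (the decreasing partners of $a$ in $T_2$ form the contiguous block $[a+1,r_{T_2}(a)]$), and the preservation step \emph{(i)} works because every deleted relation $\{y,c\}$ has $c>x>r_{T_2}(y)$. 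Together with the injectivity of $T\mapsto\Dec(T)$ and the monovariant $|\Dec|$, termination at $T_2$ is forced. What the citation buys the paper is brevity and delegation of the convention-matching; what your proof buys is independence from the reference and an explicit algorithm --- a chain of rotations realizing $T_1\leqslant T_2$ --- that makes the order-theoretic meaning of $\Dec$ transparent. If you write it up in full, the only places deserving extra care are the block computation for a single rotation and the verification that the reflection reverses (rather than preserves) the covering relation, since those are where a convention error would hide.
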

\begin{proof}
See Proposition $40$ and Remark 52 of \cite{chatel_pilaud_pons}. Note that our convention differs from theirs. 
\end{proof}
Let us recall the definition of interval-posets.
\begin{definition}
An interval-poset $(P, \lhd) $ is a poset over the integers $1,\cdots, n$ such that 
\begin{enumerate}
\item If $a\lhd c$ and $a<c$, then for all integers $b$ such that $a<b<c$, we have $b\lhd c$.
\item If $c\lhd a$ and $a<c$, then for all integers $b$ such that $a<b<c$, we have $b\lhd a$. 
\end{enumerate}
\end{definition}
The conditions $(1)$ and $(2)$ of this definition will be referred as the \emph{interval-poset condition}. The integer $n$ in the definition is called the \emph{size} of the interval-poset. 

\begin{theorem}[Châtel, Pons]\label{bij}
Let $n \in \mathbb{N}$. There is a bijection between the set of intervals in $\Tam_n$ and the set of interval-posets of size $n$. 
\end{theorem}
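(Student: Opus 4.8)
The plan is to construct explicit maps in both directions using the increasing/decreasing decomposition of relations together with the characterization of the Tamari order in Proposition~\ref{char_tamari}. Given an interval $[T_1,T_2]$ in $\Tam_n$, I would define a map $\Phi$ sending it to the relation $\lhd$ on $\{1,\dots,n\}$ given, for $a<b$, by declaring $a\lhd b$ exactly when $a\lhd_{T_2}b$ (an increasing relation read off the larger tree) and $b\lhd a$ exactly when $b\lhd_{T_1}a$ (a decreasing relation read off the smaller tree), where $\lhd_{T_i}$ denotes the subtree order of the binary search tree. In other words $\Inc(P)=\Inc(T_2)$ and $\Dec(P)=\Dec(T_1)$. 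The reason the smaller tree supplies the decreasing part and the larger tree the increasing part is precisely Proposition~\ref{char_tamari}: the inclusions $\Inc(T_1)\subseteq\Inc(T_2)$ and $\Dec(T_2)\subseteq\Dec(T_1)$ are what make the two halves compatible.

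First I would record the easy preliminary that the subtree order $\lhd_T$ of any single binary tree is itself an interval-poset; this is a direct consequence of the in-order labelling, since the vertices of the subtree rooted at $j$ form a contiguous block of integers. Because condition (1) in the definition constrains only increasing relations and condition (2) only decreasing relations, the combined relation $\lhd$ inherits condition (1) from $T_2$ and condition (2) from $T_1$, so it satisfies the interval-poset conditions as soon as it is a partial order.

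The substantive point is therefore to check that $\lhd$ is reflexive, antisymmetric and transitive. Reflexivity and antisymmetry are immediate from the definition; transitivity is the main obstacle. The difficulty is a chain $x\lhd y\lhd z$ that mixes an increasing relation (governed by $T_2$) with a decreasing one (governed by $T_1$): neither single tree sees both steps, so one cannot conclude inside a single $\lhd_{T_i}$. Here I would exploit compatibility, arguing that the interval-poset conditions force the intermediate element to lie in a common contiguous block and then using $\Inc(T_1)\subseteq\Inc(T_2)$ and $\Dec(T_2)\subseteq\Dec(T_1)$ to transport the relevant step into the tree that controls the conclusion. I expect this case analysis on the relative order of $x,y,z$ to be the technical heart of the argument.

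For the inverse I would show that an interval-poset $P$ determines its two trees uniquely. The increasing part $\Inc(P)$, satisfying condition (1), assembles into a forest whose contiguous-block structure reconstructs a unique binary search tree $T_2$ with $\Inc(T_2)=\Inc(P)$; dually $\Dec(P)$ reconstructs a unique $T_1$ with $\Dec(T_1)=\Dec(P)$. The key auxiliary fact is that a binary tree is determined by its increasing relations alone (equivalently by its decreasing relations alone), so these reconstructions are unambiguous. Finally, the inclusions forced by the poset axioms of $P$ yield $\Inc(T_1)\subseteq\Inc(T_2)$ and $\Dec(T_2)\subseteq\Dec(T_1)$, hence $T_1\leqslant T_2$ by Proposition~\ref{char_tamari}, so $[T_1,T_2]$ is a genuine interval. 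Checking that $\Phi$ and this construction are mutually inverse is then immediate from $\Inc(P)=\Inc(T_2)$ and $\Dec(P)=\Dec(T_1)$ together with the uniqueness of the reconstructed trees.
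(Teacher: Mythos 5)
There is a genuine error in your construction, and it sits exactly at the step you dismiss as ``immediate'': antisymmetry. You attach the increasing relations to the \emph{larger} tree and the decreasing relations to the \emph{smaller} tree, i.e.\ $\Inc(P)=\Inc(T_2)$ and $\Dec(P)=\Dec(T_1)$. This is the reverse of the Ch\^atel--Pons map recalled in the paper, which takes the increasing relations of the \emph{minimum} and the decreasing relations of the \emph{maximum}, and the reversal is fatal rather than a matter of convention. Take $n=2$ and the full interval $[T_1,T_2]$ where $T_1$ is the right comb ($2\lhd_{T_1}1$, so $\Dec(T_1)=\{2\lhd 1\}$) and $T_2$ is the left comb ($\Inc(T_2)=\{1\lhd 2\}$). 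Your relation then contains both $1\lhd 2$ and $2\lhd 1$, so it is not a poset at all; the correct map sends this interval to the empty relation. The structural reason is that the interval-poset must record the relations shared by \emph{every} tree $x\in[T_1,T_2]$: by Proposition~\ref{char_tamari}, $x\geqslant T_1$ forces $\Inc(T_1)\subseteq\Inc(x)$ and $x\leqslant T_2$ forces $\Dec(T_2)\subseteq\Dec(x)$, so the common relations are $\Inc(T_1)\sqcup\Dec(T_2)$. With that orientation the union is contained in both tree orders $\lhd_{T_1}$ and $\lhd_{T_2}$ (again by the inclusions of Proposition~\ref{char_tamari}), which is what rescues antisymmetry; with your orientation the union \emph{contains} both tree orders and collapses whenever a pair is comparable in opposite directions in the two trees.

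Beyond this, your outline of the inverse (reconstructing each tree separately from $\Inc(P)$ and $\Dec(P)$, using that a binary tree is determined by either half of its relations) is in the same spirit as the construction the paper recalls from Ch\^atel--Pons, where the two halves are turned into forests and then into binary trees via Knuth rotations; the paper itself does not reprove the theorem but cites Theorem~2.8 of that reference. But as written your forward map is not well defined, so the proposal does not establish the bijection.
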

\begin{proof}
This is Theorem $2.8$ of \cite{pons_chatel}.

Since we need to use the explicit version of the theorem, we recall the bijections. if $[S,T]$ is an interval in $\Tam_n$, we can construct an interval-poset as follows. The trees $S$ and $T$ can be seen as binary search trees and they induce two partial order relations $\lhd_S$ and $\lhd_T$. Let $P = \{1,2,\cdots, n\}$. Let $\lhd$ be the binary relation on $P$ given by the disjoint union of the increasing relations of $S$ and the decreasing relations of $T$.  Then, it is proved in \cite{pons_chatel} that $(P,\lhd)$ is an interval-poset. 

Conversely, if $(P,\lhd)$ is an interval-poset of size $n$. Let $D$ be the poset obtained from $P$ by keeping only the decreasing relations of $P$. Similarly let $I$ be the poset obtained by keeping the increasing relations. By Lemma $2.5$ of \cite{pons_chatel}, the Hasse diagrams of these two posets are two forests. If we add a common root to the trees of each of these forests, we obtain two planar trees. Now, using the so-called \emph{knuth} rotations, we produce two binary trees starting from these planar trees.

For $I$ we recursively produce a binary tree $S$ by using the rule: right brother becomes right son and son becomes left son.

For $D$ we recursively produce a binary tree $T$ by using the rule: left brother becomes left son and son becomes right son.

The tree $S$ is smaller than $T$ for the order of the Tamari lattice, so we have an interval $[S,T]$.

It was proved in Theorem $2.8$ of \cite{pons_chatel} that these two constructions give two bijections inverse of each other. 
\end{proof}
\subsection{Noncrossing trees and exceptional interval-posets}\label{bij_ip}

Using three operads, it was proved in \cite{chapoton_mould,chapoton_etal_mould}, that one can associate an interval in the Tamari lattice to any noncrossing tree. In \cite{rognerud_interval_poset} we obtained a description of these intervals in terms of interval-posets avoiding certain configurations that we called \emph{exceptional}. For the purpose of this article, we can forget about the operads behind this description.  

We start by recalling the definitions of noncrossing trees and exceptional interval-posets. Then, we recall the bijections obtained in \cite{rognerud_interval_poset} between the noncrossing trees and the exceptional interval-posets. Finally, we give a new bijection between these two families. Both of these bijections will be used in the rest of the article.

Let $\mathcal{C}_n$ be a circle with $n+1$ equidistant points labeled in the clockwise order from $1$ to $n+1$. When we draw $\mathcal{C}_n$, we will always put the vertex with label $1$ on the bottom. 

A \emph{noncrossing tree} in $C_n$ is a set of edges between the marked points with the following properties 
\begin{itemize}
\item edges do not cross pairwise,
\item any two points are connected by a sequence of edges,
\item There is no loop made of edges.
\end{itemize}
See Figure \ref{fig1} for an example. We say that $n$ is the size of the tree and we denote by $\NCT_n$ the set of all noncrossing trees of size $n$. It is classical that the number of noncrossing trees in $C_n$ is $\frac{1}{2n+1}{{3n}\choose{n}}$ (See e.g. Theorem $3.10$ of \cite{dulucq}).

Let $T$ be a noncrossing tree in $\mathcal{C}_n$. An edge between two vertices $x$ and $y$ is denoted by $[x-y]$. An edge between $i$ and $i+1$ in $T$ is called a \emph{side} of $T$. If there is no edge from $i$ to $i+1$, we say that $[i-(i+1)]$ is an \emph{open side} of $T$. The side $[1-(n+1)]$ is called the base of $T$.

\begin{definition}\label{exc}
An interval-poset whose Hasse diagram does not contain any configuration of the form $y\to z$ and $y\to x$ where $x<y<z$ is called an \emph{exceptional} interval-poset.
\end{definition} 

Let us recall a bijection between exceptional interval-posets and noncrossing trees. For more details we refer to Section $3$ of \cite{rognerud_interval_poset}. 

If $(P,\lhd)$ is an exceptional interval-poset over the integers $[1,n]$ we can construct a list of edges in $\mathcal{C}_n$ by: 

for an integer $v$ consider the poset $\{ x\in [1,n]\ ;\ x\lhd v\}$. This poset has a minimal element (for the usual order relation $<$) $v_1$ and a maximal element $v_2$. We associate to $v$ the edge $[v_1 - (v_2+1)]$. We denote by $\psi(P)$ the set of these edges in $\mathcal{C}_n$. See Figure \ref{fig1} for an illustration. 

Conversely, let $T$ be a noncrossing tree of size $n$. 

\begin{enumerate}
\item We label the edges of the noncrossing $T$:

if an edge is a boundary side $[i,i+1]$, then it is labeled by $i$. If not, the edge separates a unique open side $[i,i+1]$ from the basis and it is labeled by $i$.

\item We define a relation $\lhd_T$ on $\{1,2,\cdots,n\}$ by $i \lhd_{T} j$ if the edge $i$ is separated from the base by the edge $j$. We let $\psi^{-1}(T)=\big(\{1,2,\cdots,n\},\lhd_T \big)$.   
\end{enumerate}

\begin{theorem}\label{image_noncrossing}
Let $n \in \mathbb{N}$. The map $\psi$ induces a bijection between the set of exceptional interval-posets of size $n$ and the set of noncrossing trees of size $n$ with $\psi^{-1}$ as inverse.
\end{theorem}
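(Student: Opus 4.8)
The plan is to pass through the principal down-sets of the order $\lhd$ and reduce everything to a statement about laminar families of integer intervals. For an interval-poset $(P,\lhd)$ on $[1,n]$ and $v\in[1,n]$, write $D_v=\{x : x\lhd v\}$. First I would observe that the two interval-poset conditions force $\{x<v : x\lhd v\}$ and $\{x>v : x\lhd v\}$ to be contiguous blocks abutting $v$, so that $D_v=[v_1,v_2]$ is an \emph{integer interval} with $v_1\le v\le v_2$; here $v_1,v_2$ are exactly the minimal and maximal elements occurring in the definition of $\psi$, and the associated chord is $e_v:=[v_1-(v_2+1)]$. Since $v\in D_v$, antisymmetry of $\lhd$ gives that $v\mapsto D_v$ is injective, so $\psi(P)$ consists of exactly $n$ distinct chords on the $n+1$ marked points. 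Because two chords $[a-(b+1)]$ and $[c-(d+1)]$ cross if and only if $[a,b]$ and $[c,d]$ properly overlap (they meet but neither contains the other), the noncrossing requirement is equivalent to the family $\{D_v\}$ being \emph{laminar}.

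The heart of the argument, and the step I expect to be the main obstacle, is to show that $P$ is exceptional if and only if $\{D_v\}$ is laminar. One direction I would prove by contraposition: suppose $D_u=[l_u,r_u]$ and $D_v=[l_v,r_v]$ properly overlap, normalised so that $l_u<l_v\le r_u<r_v$. A short check using antisymmetry pins down $u\in[l_u,l_v-1]$ and $v\in[r_u+1,r_v]$, so every element of the overlap $J:=[l_v,r_u]=D_u\cap D_v$ lies strictly between $u$ and $v$ and is $\lhd$-below both. I would then pick a $\lhd$-maximal element $k^\ast$ of $J$, a cover $c_u$ of $k^\ast$ on a saturated chain to $u$, and a cover $c_v$ of $k^\ast$ on a saturated chain to $v$. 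Maximality of $k^\ast$ in $J$ forces $c_u,c_v\notin J$; since $c_u\lhd u$ gives $c_u\in D_u\setminus J=[l_u,l_v-1]$ and $c_v\lhd v$ gives $c_v\in D_v\setminus J=[r_u+1,r_v]$, we obtain $c_u<k^\ast<c_v$. Thus $k^\ast$ is covered both by the smaller integer $c_u$ and the larger integer $c_v$, which is precisely the forbidden configuration of Definition~\ref{exc}; hence $P$ is not exceptional. The reverse implication is the same dictionary read backwards.

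Next I would upgrade ``noncrossing graph on $n+1$ vertices with $n$ edges'' to ``spanning tree''. The interval-poset conditions together with exceptionality imply that each element has at most one $\lhd$-cover: condition $(1)$ forbids two increasing covers (for $y\lessdot z_1$, $y\lessdot z_2$ with $y<z_1<z_2$ it forces $z_1\lhd z_2$), condition $(2)$ forbids two decreasing covers, and exceptionality forbids one of each. Hence the Hasse diagram of $\lhd$ is a rooted forest whose roots are the $\lhd$-maximal elements, and by laminarity the down-sets of distinct maximal elements are disjoint and cover $[1,n]$, partitioning it into consecutive integer intervals. This sets up an induction on $n$: when there are at least two maximal elements, each block $P|_{D_{M_i}}$ is a strictly smaller exceptional interval-poset whose chords are the same $e_x$ for $x\in D_{M_i}$ and form a tree by induction, and consecutive blocks are glued along a single shared marked point; when there is a unique maximal element $M$ one has $D_M=[1,n]$ with $e_M$ the base $[1-(n+1)]$, and the strictly smaller blocks attached to the covers of $M$ are bridged by this base edge. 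In each case one assembles a connected graph with $n$ edges on $n+1$ vertices, i.e. a spanning tree, so $\psi(P)\in\NCT_n$.

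Finally I would verify that $\psi^{-1}$ inverts $\psi$, which is bookkeeping once the dictionary is in place. The chords of $\psi(P)$ enclosing the side $[v-(v+1)]$ correspond to the elements $w$ with $v\lhd w$, so the inclusion-minimal enclosing chord is $e_v$, and the labelling rule assigns $e_v$ the label $v$ (the side $[v-(v+1)]$ is open exactly when $v_1<v_2$, matching the non-side case). Tracing definitions, ``edge $i$ is separated from the base by edge $j$'' translates to $D_i\subseteq D_j$, that is $i\lhd j$, giving $\psi^{-1}\circ\psi=\mathrm{id}$; the reverse composite follows because the edge labelled $j$ bounds a region containing exactly the sides indexed by $D_j$, whose extreme vertices are $\min D_j$ and $\max D_j+1$. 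It is worth noting that $\psi^{-1}(T)$ is automatically exceptional: ``separated from the base'' is the ancestor relation in the tree rooted at the base, whose covers are the unique parents, so no element can have two covers at all. With both maps well defined and mutually inverse, they are inverse bijections between exceptional interval-posets and noncrossing trees of size $n$.
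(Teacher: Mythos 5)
Your proposal is correct in its main ideas, but it is a genuinely different route from the paper's: the paper gives no argument for Theorem \ref{image_noncrossing} at all, deferring entirely to Section 3 of \cite{rognerud_interval_poset}, whereas you build a self-contained combinatorial proof. Your central device --- the interval-poset axioms force each principal down-set $D_v=\{x: x\lhd v\}$ to be an integer interval, chords cross exactly when the corresponding intervals properly overlap, hence exceptionality is equivalent to laminarity of $\{D_v\}_v$ --- is a clean reformulation, and the crux (a $\lhd$-maximal $k^{\ast}$ in a proper overlap $D_u\cap D_v$ acquiring covers $c_u<k^{\ast}<c_v$ on the two sides, which is exactly the forbidden configuration of Definition \ref{exc}) is right; note only that the containments $D_u\subseteq D_v$ you invoke come from transitivity of $\lhd$ rather than antisymmetry. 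What your approach buys is a direct, checkable argument in place of a black-box citation; what it still owes is that several steps you dismiss as bookkeeping genuinely need writing out. First, the converse ``not exceptional $\Rightarrow$ not laminar'' should be displayed: if $y$ has covers $x<y<z$ then $y\in D_x\cap D_z$ while $x\notin D_z$ and $z\notin D_x$, since $x\lhd z$ (resp.\ $z\lhd x$) would contradict $y\lessdot z$ (resp.\ $y\lessdot x$) being a cover. Second, the well-definedness of the edge labelling of an arbitrary noncrossing tree --- that every side-position $i\in[1,n]$ has a nonempty chain of enclosing edges, nested by noncrossingness, and that ``position $\mapsto$ innermost enclosing edge'' is a bijection onto the edge set --- is asserted rather than proved; this matters because the paper's clause ``the edge separates a unique open side from the base'', read literally, can fail (a single edge may separate two open sides from the base, one of them shielded by an inner edge), and your innermost-chord reading is precisely the needed disambiguation, provable by the same induction on nested subtrees you use for the spanning-tree step. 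Third, you should state explicitly that $\lhd_T$ satisfies the two interval-poset axioms, which follows from the contiguity of the set of positions enclosed by a given edge. None of these is a gap in the ideas, only in the write-up.
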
 
\begin{proof}
See Section $3$ of \cite{rognerud_interval_poset}.
\end{proof}

Let us introduce another bijection between these two families.

Let $(P,\lhd)$ be an exceptional interval-poset of size $n$. Then, we consider $\widehat{P} = P\sqcup \{n+1\}$. We extend the relation $\lhd$ to $\widehat{P}$ by saying that $x\lhd n+1$ for $x\in P$. The new cover relations of $\widehat{P}$ are of the form $x\lhd (n+1)$ for $x$ a maximal element of $P$. We can see that $\widehat{P}$ is still an exceptional interval-poset because a new forbidden configuration would be of the form $y\lhd n+1$ and $y\lhd x$ for $x<y<n+1$. If $y\lhd n+1$ is a cover relation, then $y$ is a maximal element of $P$. This contradicts the relation $y\lhd x$. 

From the Hasse diagram of $\widehat{P}$ we construct $\theta(P)$ a set of edges in the circle $\mathcal{C}_n$ by sending the cover relation $i\lhd j$ to the edge $[i,j]$.

Conversely, let $T$ be a noncrossing tree in $\mathcal{C}_n$. Since $T$ is a noncrossing tree, for each vertex $i$ there is a unique path made of edges towards the vertex $n+1$. In particular, we can orient the edges of $T$ towards the vertex $n+1$. Then, a \emph{descent} is an edge from $i$ to $j$ where $i>j$ and a \emph{rise} is an edge from $i$ to $j$ where $i<j$.

From $T$ we consider the set $H$ of relations $i\lhd j$ where $[i-j]$ is an edge oriented from $i$ to $j$. See Figure \ref{fig1} for an illustration. 

\begin{theorem}\label{big_theta}
\begin{enumerate}
\item Let $P$ be an exceptional interval-poset of size $n$. Then, $\theta(T)$ is a noncrossing tree of size $n$.
\item Let $P$ be a noncrossing tree. Then, $H$ is the Hasse diagram of an exceptional interval-poset denoted by $\theta^{-1}(P)$. 
\item $\theta$ induces a bijection between the set of exceptional interval-posets of size $n$ and noncrossing trees in $\mathcal{C}_n$ with inverse $\theta^{-1}$. 
\end{enumerate}
\end{theorem}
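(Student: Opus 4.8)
The plan is to handle the three statements in order, the crux being a geometric dictionary between the noncrossing condition on $\mathcal{C}_n$ and the two interval-poset conditions. The engine of the whole argument is a structural observation I would isolate first as a lemma: \emph{in an exceptional interval-poset every element is covered by at most one element.} Indeed, condition $(1)$ forbids two increasing covers $y\lhd z_1,\ y\lhd z_2$ with $y<z_1<z_2$ (applying $(1)$ to $y\lhd z_2$ forces $z_1\lhd z_2$, so $y\lhd z_2$ is not a cover); condition $(2)$ symmetrically forbids two decreasing covers; and the exceptional condition forbids one increasing together with one decreasing cover. Together with the elementary computation that for each $v$ the set $\{x : x\lhd v\}$ is a contiguous integer interval $[m_v,M_v]$ containing $v$ (again a direct consequence of $(1)$ and $(2)$), this lemma is what converts the combinatorics of $\lhd$ into the geometry of chords.

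For part $(1)$, I would first use that $\widehat{P}$ is an exceptional interval-poset with unique maximal element $n+1$ (already observed before the theorem), so by the lemma each of $1,\dots,n$ has exactly one cover in $\widehat{P}$ while $n+1$ has none; hence $\theta(P)$ has exactly $n$ edges on the $n+1$ vertices of $\mathcal{C}_n$. Connectivity is immediate, since following unique covers upward from any vertex terminates at the unique maximum $n+1$. As a connected graph on $n+1$ vertices with $n$ edges is a tree, it remains only to show that no two chords cross. I would argue by contradiction through a short case analysis on whether each of two crossing covers is increasing (a rise) or decreasing (a descent): the interleaving inequalities coming from a crossing feed the relevant endpoint into condition $(1)$ or $(2)$ and produce an extra relation. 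For two rises or two descents this yields an intermediate element contradicting that one of the two relations is a cover; for a mixed pair it yields either $w\lhd w'$ together with $w'\lhd w$ (contradicting antisymmetry), or, after factoring two derived relations through the unique covers provided by the lemma, the same antisymmetry violation.

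For part $(2)$, orienting the edges of the noncrossing tree toward $n+1$ gives every vertex $\ne n+1$ a unique parent, so the transitive closure of $H$ is exactly the ancestor partial order, whose cover relations are precisely the parent edges $H$; thus $H$ is genuinely a Hasse diagram, and the exceptional condition holds vacuously since no element has two covers. The substance is the two interval-poset conditions, which I would show are equivalent to the single geometric statement that \emph{the descendant set $\mathrm{Sub}(y)$ of every vertex $y$ is a contiguous interval of integers}: conditions $(1)$ and $(2)$ say exactly that the descendants of $y$ below $y$ form an interval ending at $y-1$ and those above $y$ form an interval starting at $y+1$. I would prove this contiguity lemma by induction on the subtree size: a leaf is a singleton, and for an internal $y$ with children $c_1,\dots,c_k$ the $\mathrm{Sub}(c_i)$ are disjoint contiguous intervals by induction, while noncrossing of the chords $[y-c_i]$ and $[y-\mathrm{parent}(y)]$ forces them, together with $\{y\}$, to abut into one interval---the point being that a vertex $b$ lying between two descendants of $y$ but outside $\mathrm{Sub}(y)$ would make its path to the root cross one of these chords. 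Deleting the maximum $n+1$ then gives the exceptional interval-poset $\theta^{-1}(P)$ on $\{1,\dots,n\}$.

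Finally, part $(3)$ follows once the two constructions are seen to be mutually inverse, which I expect to be routine given the dictionary. The one thing to check is that the orientation used by $\theta^{-1}$ matches the poset direction used by $\theta$: an edge $[i,j]$ arising from a cover $i\lhd j$ of $\widehat{P}$ is oriented $i\to j$ when pointing toward the root, because the tree path from $i$ to $n+1$ is exactly the chain of covers above $i$; hence $\theta^{-1}\theta(P)=P$, and symmetrically $\widehat{\theta^{-1}(T)}$ is the ancestor poset whose covers recover the edges of $T$, giving $\theta\theta^{-1}(T)=T$. The main obstacle throughout is the geometric dictionary itself---the equivalence ``covers of an exceptional interval-poset $\leftrightarrow$ noncrossing chords'' and its inverse ``subtrees of a rooted noncrossing tree $\leftrightarrow$ contiguous intervals''. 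Both the crossing case analysis of part $(1)$ and the contiguity induction of part $(2)$ are two faces of this single phenomenon, and keeping the interleaving inequalities straight is where the real care is needed.
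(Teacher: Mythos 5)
Your proof is correct, but it takes a genuinely different route from the paper, which in fact gives no direct argument at all: the proof of Theorem \ref{big_theta} there is ``left to the reader,'' with the alternative of deducing bijectivity a posteriori from Proposition \ref{psironddual}, which shows $\theta = (\,\cdot\,)^*\circ\psi$, so that $\theta$ is a composite of the already-established bijection $\psi$ (Theorem \ref{image_noncrossing}, imported from \cite{rognerud_interval_poset}) with planar duality, itself invertible since its square is a rotation (Lemma \ref{double_dual}). Your argument is instead direct and self-contained. Its two pivots are sound and worth recording: the observation that in an exceptional interval-poset every element has at most one cover (the two interval-poset conditions kill two increasing or two decreasing covers, and the exceptional condition kills the mixed pair), which immediately gives the edge count, connectivity to $n+1$, and hence treeness; and the dictionary ``down-sets are contiguous integer intervals $\leftrightarrow$ descendant sets of the tree rooted at $n+1$ are contiguous,'' which drives both the noncrossing verification (I checked the interleaving case analysis, including the mixed case $c<a<d<b$ where one must factor the derived relations through the unique covers to reach the antisymmetry contradiction, and the boundary case where one chord ends at $n+1$) and the converse direction. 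What the paper's route buys is brevity given the surrounding machinery ($\psi$, duality, and Lemma \ref{6.1} are all needed later anyway); what yours buys is independence from Proposition \ref{psironddual} and from the external reference, plus the unique-cover lemma, which makes transparent why exceptional interval-posets correspond to trees rather than general noncrossing graphs. The only point to make explicit in a write-up is the one you already flag implicitly: $\theta^{-1}$ as defined produces relations into $n+1$, so one must delete the maximum to land on $\{1,\dots,n\}$ before comparing with $P$.
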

\begin{proof}
The proof is not difficult and is left to the reader. Alternatively, in Proposition \ref{psironddual} we show that $\theta$ is the composition of $\psi$ and the planar duality for noncrossing trees. This will imply that it is a bijection. 
\end{proof}

 \begin{figure}
 \centering
\begin{tikzpicture}
\equic[1 cm]{5}
\draw[thick,red] (N3) -- (N2);
\draw[thick,red] (N1) -- (N4) -- (N5);
\draw[thick,blue] (N1) -- (N3);
\end{tikzpicture}
$\ \ \ $
\begin{tikzpicture}
\node (0) at (-5.5,4) {$\overset{\theta}{\longleftrightarrow}$}; \node (1) at (-4,4) {$1$}; \node (2) at (-3,4) {$2$}; \node (3) at (-2,4) {$3$}; \node (4) at (-1,4) {$4$}; \node (5) at (0.5,4) {$\overset{\psi}{\longleftrightarrow}$};
\draw[red,->,below,thick] (2.south) to [out=-45,in=225] (3.south);
\draw[red,->,below,thick] (1.south) to [out=-45,in=225] (4.south);
\draw[blue,->,above,thick](3.north) to [out=135,in=45] (1.north);
\end{tikzpicture}
$\ \ \ $
\begin{tikzpicture}
\equic[1 cm]{5}
\draw[thick] (N4) -- (N2)--(N3);
\draw[thick] (N1) -- (N5);
\draw[thick] (N1) -- (N4);
\end{tikzpicture}
\caption{Illustration of the bijections $\theta$ and $\psi$ between interval-posets and noncrossing trees.}\label{fig1}
\end{figure}

\section{Interval-posets in the derived category of the Tamari lattices}\label{section_dico}
 
 If $I$ is an interval in the Tamari lattice $\Tam_n$, then $\bigoplus_{x\in I}\kk e_x$ is an indecomposable $A_\kk(\Tam_n)$-module. For simplicity, we also denote this module by $I$. 

As explained in Theorem \ref{bij}, the intervals of the Tamari lattices have a nice description in terms of special partial order relations. So, starting with an interval-poset $P$ of size $n$, we implicitly use the bijection of Ch\^atel and Pons in order to have an interval $I_P$ of $\Tam_n$. Then, we construct the corresponding module, and finally we look at its image in the derived category. From now on, \emph{we will always identify the interval-posets with their corresponding indecomposable object in the derived category}. In this section we give a small dictionary that will allow us to work with them.

Let us start by a characterization of the interval-posets corresponding to simple modules.

\begin{proposition}\label{simple}
Let $I$ be an interval-poset of size $n$. Then, $I$ is a simple $A_{\kk}$-module if and only if for all $i < j $, there exists $i\leqslant z\leqslant j$ such that $i\lhd z$ and $j\lhd z$.
\end{proposition}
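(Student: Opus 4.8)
The plan is to reformulate simplicity combinatorially and then recognise the stated condition as the one characterising single-tree interval-posets. Since the simple modules of the incidence algebra are exactly the one-dimensional modules $S_x$ (because $A/J(A)$ is a product of copies of $\kk$), the interval module $I$ is simple if and only if the associated Tamari interval $I_P=[S,T]$ is trivial, i.e.\ $S=T$; any interval module of dimension $\geqslant 2$ has a composition series of length $\geqslant 2$. Under the Ch\^atel--Pons bijection (Theorem \ref{bij}) a trivial interval $[T,T]$ corresponds precisely to the interval-poset $\lhd_T$ attached to the single binary search tree $T$. So it suffices to prove that an interval-poset $(P,\lhd)$ is of the form $\lhd_T$ for a single tree $T$ if and only if the condition holds.

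First I would record a preliminary fact used throughout: for $v\in P$ put $B(v)=\{x : x\lhd v\}$. A short case analysis from the two interval-poset conditions shows that $B(v)$ is an \emph{integer interval} containing $v$. Indeed, given $a<c$ in $B(v)$ and $a<b<c$, one applies condition $(1)$ when $v\geqslant c$ (using $a\lhd v$, $a<v$), condition $(2)$ when $v\leqslant a$ (using $c\lhd v$, $v<c$), and both in the remaining case $a<v<c$, to conclude $b\lhd v$. With this in hand the proposition's condition reads: for every $i<j$ there is some $z\in[i,j]$ with $[i,j]\subseteq B(z)$, because $i,j\in B(z)$ and $B(z)$ is an interval.

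For the forward direction, suppose $P=\lhd_T$ for a single binary search tree $T$. Given $i<j$, let $z$ be their least common ancestor in $T$. Then $i$ and $j$ lie in the subtree rooted at $z$, so $i\lhd z$ and $j\lhd z$; and the in-order (search-tree) labelling puts the left subtree of $z$ below $z$ and the right subtree above $z$, forcing $i\leqslant z\leqslant j$. Hence the condition holds.

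The main work is the converse, which I would prove by induction on $n$ by peeling off the root. Applying the condition to $i=1$, $j=n$ yields $z$ with $B(z)=[1,n]$, i.e.\ a maximum $r$ of $P$, which will be the prospective root. The key local claim is that there are \emph{no crossing relations}: if $a<r<b$ then neither $a\lhd b$ nor $b\lhd a$ holds, since either relation would force $r\in B(a)$ or $r\in B(b)$ (as $B(\cdot)$ is an interval straddling $r$), contradicting the maximality of $r$. This in particular gives $B(z)\subseteq[1,r-1]$ for $z<r$, so the restrictions of $\lhd$ to $[1,r-1]$ and to $[r+1,n]$ are again interval-posets, and the witnesses $z\in[i,j]$ show that each restriction still satisfies the condition; by induction they are the search-tree orders of trees $T_L$ and $T_R$. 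Grafting $T_L$ and $T_R$ as the left and right subtrees of a root $r$ produces a binary search tree $T$, and comparing subtree membership relation by relation (within each side, everything below $r$, and the absence of cross relations) gives $\lhd=\lhd_T$. I expect the converse to be the main obstacle, and within it the two delicate points are the ``no crossing relations'' claim and verifying that the defining condition genuinely descends to the two sub-intervals so that the induction closes; the rest is the routine bookkeeping of translating subtree membership in a binary search tree into the relation $\lhd$.
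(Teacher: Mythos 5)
Your argument is correct, but it is worth noting that the paper does not actually prove this statement: it simply cites Proposition~39 of Ch\^atel--Pilaud--Pons \cite{chatel_pilaud_pons} (with a remark that the conventions differ). What you have written is a self-contained substitute for that citation, and the route you take is the natural one. The reduction in your first paragraph is sound: since $A_\kk(\Tam_n)/J$ is a product of copies of $\kk$, the interval module $\bigoplus_{x\in[S,T]}\kk e_x$ is simple exactly when $|[S,T]|=1$, and under Theorem~\ref{bij} the trivial interval $[T,T]$ corresponds to the full order $\lhd_T$ of the binary search tree $T$. The preliminary observation that each down-set $B(v)=\{x:x\lhd v\}$ is an integer interval containing $v$ follows exactly as you say from the two interval-poset axioms, and it is what makes both directions work: the forward direction via the least common ancestor (whose in-order label is sandwiched between $i$ and $j$), and the converse via root-peeling. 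In the converse, your two flagged delicate points are indeed the ones to check, and both go through: a crossing relation $a\lhd b$ or $b\lhd a$ with $a<r<b$ would force $r\lhd b$ (resp.\ $r\lhd a$) by the interval-poset condition, contradicting by antisymmetry that $r$ is the maximum; and the witness $z\in[i,j]$ supplied by the hypothesis automatically lies in the relevant sub-range, so the condition descends to $[1,r-1]$ and $[r+1,n]$ and the induction closes. So your proof is complete in outline and can stand in place of the external reference; the only thing it costs is length, and what it buys is independence from \cite{chatel_pilaud_pons} and an explicit algorithm (repeated extraction of the maximum) reconstructing the tree $T$ from the interval-poset.
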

\begin{proof}
This is Proposition $39$ \cite{chatel_pilaud_pons}. 
\end{proof}
We also have a simple description of the projective indecomposable and the injective indecomposable. 
\begin{proposition}\label{dico}
Let $I$ be an interval-posets of size $n$.
\begin{enumerate}
\item $I$ is projective if and only if $\Dec(I)=\emptyset$. 
\item $I$ is injective if and only if $\Inc(I)=\emptyset$. 
\item Let $P_1$ and $P_2$ be two projective interval-posets. Then 
\[\Hom_{\RTk}(P_1,P_2) = \left\{\begin{array}{c}\kk \hbox{ if $\Inc(P_2)\subseteq \Inc(P_1)$,} \\0 \hbox{ otherwise.}\end{array}\right.\]
\item Let $I_1$ and $I_2$ be two injective interval-posets. Then 
\[\Hom_{\RTk}(I_1,I_2) = \left\{\begin{array}{c}\kk \hbox{ if $\Dec(P_1)\subseteq \Dec(P_2)$,} \\0 \hbox{ otherwise.}\end{array}\right.\]
\end{enumerate}
\end{proposition}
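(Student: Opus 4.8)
The plan is to identify the projective and injective interval modules with the two families of \emph{extremal} intervals of $\Tam_n$, and then to read off all four statements from Proposition \ref{basic_module} and the order-theoretic characterization of Proposition \ref{char_tamari}. Write $\hat{1}$ (resp. $\hat{0}$) for the maximum (resp. minimum) of the lattice $\Tam_n$. The starting observation is that, as a representation of the poset, the projective indecomposable $P_x$ has support the principal order filter $\{u : u\geqslant x\}=[x,\hat{1}]$ with identity transition maps, so $P_x$ is exactly the interval module attached to the interval $[x,\hat{1}]$; dually, $I_x$ is the interval module attached to the principal order ideal $[\hat{0},x]$. Since every interval module is indecomposable and, by Proposition \ref{basic_module}, every projective (resp. injective) indecomposable is some $P_x$ (resp. $I_x$), an interval module $[S,T]$ is projective exactly when $T=\hat{1}$ and injective exactly when $S=\hat{0}$.

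For item $(1)$ I would first record that $\hat{1}$ is the unique binary tree with $\Dec(\hat{1})=\emptyset$. The left comb has no decreasing relations, and if $\Dec(T)=\emptyset$ then $\Dec(T)\subseteq\Dec(T')$ for every $T'$, so by Proposition \ref{char_tamari} we have $T'\leqslant T$ for all $T'$, forcing $T=\hat{1}$; applying the same inclusion to the left comb gives $\Dec(\hat{1})=\emptyset$. Next, under the Ch\^atel--Pons bijection of Theorem \ref{bij}, the interval-poset of $[S,T]$ is the disjoint union of the increasing relations of $S$ and the decreasing relations of $T$, so $\Dec(I)=\Dec(T)$ and $\Inc(I)=\Inc(S)$. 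Combining these, $I=[S,T]$ is projective iff $T=\hat{1}$ iff $\Dec(T)=\emptyset$ iff $\Dec(I)=\emptyset$, which is item $(1)$. Item $(2)$ is entirely dual: the right comb is the unique tree with no increasing relations, it equals $\hat{0}$, and $I=[S,T]$ is injective iff $S=\hat{0}$ iff $\Inc(I)=\Inc(S)=\emptyset$.

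Items $(3)$ and $(4)$ are then immediate translations. Writing the projective interval-poset $P_i$ as the module $P_{S_i}$, where $P_i$ corresponds to $[S_i,\hat{1}]$, Proposition \ref{basic_module}$(3)$ gives $\Hom(P_1,P_2)\cong\kk$ when $S_2\leqslant S_1$ and $0$ otherwise; by Proposition \ref{char_tamari}, $S_2\leqslant S_1$ is equivalent to $\Inc(S_2)\subseteq\Inc(S_1)$, that is $\Inc(P_2)\subseteq\Inc(P_1)$, giving item $(3)$. Symmetrically, writing the injective interval-poset $I_i$ as $I_{T_i}$ with $I_i$ corresponding to $[\hat{0},T_i]$, Proposition \ref{basic_module}$(3)$ gives $\Hom(I_1,I_2)\cong\kk$ iff $T_2\leqslant T_1$, which by Proposition \ref{char_tamari} is equivalent to $\Dec(T_1)\subseteq\Dec(T_2)$, i.e. $\Dec(I_1)\subseteq\Dec(I_2)$, giving item $(4)$.

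The only genuinely delicate points are bookkeeping rather than conceptual. One must verify that the thin interval module attached to $[x,\hat{1}]$ really is isomorphic to $P_x$ as an $A_\kk(\Tam_n)$-module, matching both the support and the transition maps (and dually for $I_x$ on $[\hat{0},x]$). One must also keep straight which of $S$ and $T$ contributes the increasing and which the decreasing relations in the Ch\^atel--Pons bijection, since a convention error there would interchange items $(1)$--$(2)$ and $(3)$--$(4)$. I expect the main obstacle to be precisely this careful matching of conventions; once the identifications $P_x=[x,\hat{1}]$ and $I_x=[\hat{0},x]$ are pinned down, all four statements follow formally from the two quoted propositions.
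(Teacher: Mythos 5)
Your proposal is correct and follows essentially the same route as the paper: identify the projective (resp.\ injective) interval-posets with the intervals $[S,\hat{1}]$ (resp.\ $[\hat{0},T]$), note that the Ch\^atel--Pons bijection gives $\Dec(I)=\Dec(T)$ and $\Inc(I)=\Inc(S)$, and then read off items $(3)$--$(4)$ from Propositions \ref{basic_module} and \ref{char_tamari}. The only (cosmetic) difference is that you derive the uniqueness of the tree with no decreasing relations from the order characterization of Proposition \ref{char_tamari}, whereas the paper argues directly that such a tree must be the left comb via the binary-search-tree labeling; both still rest on the same elementary check that the left comb has no decreasing relations.
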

\begin{proof}
Let $I$ be an interval-poset and $i=[S,T]$ be the corresponding interval of the Tamari lattice. Then $i$ is projective if and only if $T$ is the maximal element of the Tamari lattice. The maximal element of $T$ is the left comb and it has no decreasing relation. Conversely, if $A$ is a binary tree, then the vertices of its right subtree are labeled by integers larger than the label of the root. So if $A$ has no decreasing relation, it is the left comb. 

By construction, the decreasing relations of $I$ are the decreasing relations of $T$. So $I$ is the interval-poset of a projective interval if and only if it has no decreasing relations. The proof for the injective interval-posets is dual. 

Let $x,y$ be two binary trees. By Proposition \ref{char_tamari}, we have $x\leqslant y$ if and only if $\Inc(x)\subseteq \Inc(y)$ if and only if $\Dec(y)\subseteq \Dec(x)$. The result follows from the description of the morphisms between projective modules and between injective modules given in Proposition \ref{basic_module}.

\end{proof}
As immediate corollary, we have:

\begin{corollary}
Let $n\in\mathbb{N}$. The projective indecomposable, the injective indecomposable and the simple $A_{\kk}(\Tam_n)$-modules are exceptional interval-posets. 
\end{corollary}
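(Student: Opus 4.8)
The plan is to handle the three families separately, using the characterizations already available, after unwinding what the forbidden configuration of Definition \ref{exc} means in terms of increasing and decreasing relations. The configuration consists of two cover relations $y \lhd z$ and $y \lhd x$ issuing from a common vertex $y$ with $x<y<z$; in the terminology of Section \ref{section_tam}, $y \lhd x$ is a \emph{decreasing} relation (since $x<y$) and $y \lhd z$ is an \emph{increasing} relation (since $y<z$). So an interval-poset fails to be exceptional precisely when some vertex $y$ is simultaneously the source of a decreasing cover and of an increasing cover.

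With this reformulation the projective and injective cases are immediate. By Proposition \ref{dico}(1) a projective interval-poset has $\Dec(I)=\emptyset$, so it has no decreasing relation at all and in particular cannot contain the decreasing cover $y \lhd x$; hence it is exceptional. Dually, by Proposition \ref{dico}(2) an injective interval-poset has $\Inc(I)=\emptyset$, so it cannot contain the increasing cover $y \lhd z$, and is therefore exceptional as well.

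The remaining point, and the only one requiring an argument, is the simple case. Here I would use that a simple module $S_x$ corresponds to the degenerate interval $[x,x]$, whose interval-poset under the bijection of Theorem \ref{bij} is $\Inc(x)\sqcup\Dec(x)=\lhd_x$, the full ancestor relation of the binary search tree $x$. The Hasse diagram of $\lhd_x$ is then the tree $x$ itself, with one edge from each non-root vertex to its parent, so \emph{every vertex is the source of at most one cover relation}. Since the forbidden configuration demands two distinct covers $y \lhd x$ and $y \lhd z$ with $x\neq z$ emanating from the same $y$, it cannot occur, and $\lhd_x$ is exceptional.

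I expect the simple case to be the only delicate one; the tree-theoretic description above makes it clean, but it is also possible to stay on the purely combinatorial side and argue directly from Proposition \ref{simple}. Assuming the configuration were present, applying the simplicity criterion to the pair $(x,z)$ yields a witness $w$ with $x\leqslant w\leqslant z$, $x\lhd w$ and $z\lhd w$; a short case analysis on the position of $w$ relative to $y$, using conditions (1) and (2) of the interval-poset definition, then forces one of the two covers $y\lhd x$ or $y\lhd z$ to factor through a third element, contradicting that it is a cover. This second route is more computational, which is why I would prefer the tree argument.
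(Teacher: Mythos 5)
Your proof is correct. The paper states this as an immediate corollary of Proposition~\ref{dico} and writes no proof; your reformulation of the forbidden configuration as a single vertex sourcing both an increasing cover and a decreasing cover disposes of the projective and injective cases exactly as intended, and your observation that the interval-poset of a simple module is the ancestor order $\lhd_T$ of the binary search tree $T$ --- whose Hasse diagram gives every non-root vertex a unique outgoing cover (to its parent), so no vertex can source two covers --- supplies precisely the one detail the paper leaves unsaid.
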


We will also need a technical characterization of the trees belonging in an interval.

\begin{lemma}\label{interval_char}
Let $I$ be an interval-poset of size $n$. Let $[S,T]$ be the corresponding interval of $\Tam_n$. 

Let $C$ (resp. $D$) be the set of increasing (resp. decreasing) relations of $I$ and $J$ be the set of decreasing relations that are in the Hasse diagram of $I$. We denote by $\overline{J}:=\{i\lhd j
 ;
  j\lhd i \in J\}$. 

Then, for a tree $x$, we have $x\in [S,T]$ if and only if $C\subseteq \Inc(x)$ and $\overline{J}\cap \Inc(x) = \emptyset$. 
\end{lemma}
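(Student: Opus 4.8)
The plan is to combine the Châtel--Pons bijection (Theorem~\ref{bij}) with the order characterization of Proposition~\ref{char_tamari} to turn the membership $x\in[S,T]$ into two set-theoretic conditions on $\Inc(x)$, and then to show that the decreasing condition can be tested on cover relations alone. By Theorem~\ref{bij} the interval-poset $I$ satisfies $\Inc(I)=\Inc(S)=C$ and $\Dec(I)=\Dec(T)=D$, so Proposition~\ref{char_tamari} gives
\[
x\in[S,T]\iff S\leqslant x\leqslant T\iff C\subseteq\Inc(x)\ \text{and}\ D\subseteq\Dec(x).
\]
The first condition is already the one in the statement, so everything reduces to proving that, for a tree $x$, one has $D\subseteq\Dec(x)$ if and only if $\overline{J}\cap\Inc(x)=\emptyset$.

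First I would dispatch the easy implication. If $D\subseteq\Dec(x)$ then $J\subseteq D\subseteq\Dec(x)$; for a decreasing cover $(j\lhd i)\in J$ with $i<j$ this means $(i,j)\in\Dec(x)$, and since for a fixed pair $i<j$ the tree $x$ cannot have both $(i,j)\in\Inc(x)$ and $(i,j)\in\Dec(x)$ (a pair is increasing, decreasing, or incomparable in a binary search tree), the flipped relation $(i,j)\in\overline{J}$ does not lie in $\Inc(x)$. Hence $\overline{J}\cap\Inc(x)=\emptyset$.

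The substance is the converse, which I would prove by contraposition: assuming $D\not\subseteq\Dec(x)$ I must produce a decreasing cover of $I$ whose flip lies in $\Inc(x)$. Here the key structural input is that in a binary search tree each subtree occupies a contiguous block of in-order labels; consequently, for each $j$ the set $\{i : (i,j)\in\Inc(x)\}$ is an interval $[\ell^x_j,j-1]$, and dually the decreasing relations of $x$ with fixed smaller element form an up-interval. I would start from a violated decreasing relation $(j\lhd i)\in D$ (so $(i,j)\notin\Dec(x)$) and use the interval-poset condition~(2), which forces $b'\lhd i$ for every $i<b'<j$, together with the contiguity of subtrees of $x$, to rule out the possibility that $i$ and $j$ are incomparable in $x$ — this is what upgrades ``$(i,j)\notin\Dec(x)$'' to ``$(i,j)\in\Inc(x)$''. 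Finally I would reduce an arbitrary violated decreasing relation to a decreasing cover: if $(j\lhd i)$ is not a Hasse edge of $I$ there is an intermediate element $z$, and the interval-poset axioms severely restrict its position (the case $z<i$ is excluded outright by antisymmetry), so one can descend along such detours until a cover in $J$ is reached while keeping a witness inside $\Inc(x)$.

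I expect the main obstacle to be precisely this last reduction. Because $J$ consists of covers of the full interval-poset $I$ — whose order mixes the increasing relations of $S$ and the decreasing relations of $T$ — a decreasing relation can fail to be a cover through a \emph{detour} $j\lhd z\lhd i$ that passes through a larger element $z>j$ using an increasing edge followed by a decreasing edge, as opposed to a purely decreasing detour through some $i<z<j$. The purely decreasing detours shrink the quantity $j-i$ and are handled by a straightforward induction using transitivity of $\lhd_x$, but the mixed detours do not, so the delicate point is to organize the argument (for instance by choosing an extremal violated relation, or by following detours monotonically toward a cover) so that one is guaranteed to land on an element of $J$ whose flip still belongs to $\Inc(x)$.
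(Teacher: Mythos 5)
There is a genuine gap, and it sits exactly where you predicted the difficulty would be. Your opening reduction via Theorem~\ref{bij} and Proposition~\ref{char_tamari}, and your easy implication, match the paper. But you then claim that ``everything reduces to proving that, for a tree $x$, one has $D\subseteq\Dec(x)$ if and only if $\overline{J}\cap\Inc(x)=\emptyset$,'' and this unconditional equivalence is false: the hypothesis $C\subseteq\Inc(x)$ cannot be discarded in the hard direction. Take $n=3$ and $I$ with relations $2\lhd 3$, $3\lhd 1$, $2\lhd 1$; then $C=\{2\lhd 3\}$, $D=\{3\lhd 1,\,2\lhd 1\}$, and $J=\{3\lhd 1\}$ since $2\lhd 1$ factors as $2\lhd 3\lhd 1$, so $\overline{J}=\{1\lhd 3\}$. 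For the tree $x$ with root $2$ and children $1,3$ one has $\Inc(x)=\{1\lhd 2\}$ and $\Dec(x)=\{3\lhd 2\}$, hence $\overline{J}\cap\Inc(x)=\emptyset$ while $D\not\subseteq\Dec(x)$. (The lemma is not contradicted because $C\not\subseteq\Inc(x)$.) Your sketch of the converse never uses $C$, so it cannot close.

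This example also shows why the ``main obstacle'' you flag at the end is not a technical nuisance but the heart of the proof, and your proposal leaves it unresolved. The paper's mechanism is: a decreasing relation $j\lhd i$ of $I$ that is a cover in the poset of decreasing relations either lies in $J$, or factors as $j\lhd k\lhd i$ with $j<k$, where $j\lhd k\in C$ and $k\lhd i\in J$ --- precisely the ``mixed detour'' through a larger element. One then argues directly (not by contraposition): by Proposition~\ref{simple} applied to the simple interval-poset of $x$, there is $i\leqslant z\leqslant j$ with $i\lhd z$ and $j\lhd z$ in $x$; the case $i<z<j$ is absorbed by the interval-poset condition and an induction on the length of the relation, the case $z=j$ yields $i\lhd j\lhd k$, hence $i\lhd k\in\overline{J}\cap\Inc(x)$ (this is where $C\subseteq\Inc(x)$ is indispensable, to know $j\lhd k\in\Inc(x)$), so only $z=i$ survives and $j\lhd i\in\Dec(x)$. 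Your ``contiguity of subtrees'' observation is essentially Proposition~\ref{simple} in disguise and is the right tool for ruling out incomparability, but without carrying $C\subseteq\Inc(x)$ through the argument and without the factorization through an element of $J$ via an increasing relation of $C$, the descent to a cover of $J$ ``while keeping a witness inside $\Inc(x)$'' cannot be carried out.
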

\begin{proof}
By Proposition \ref{char_tamari}, $x\in [S,T]$ if and only if $C\subseteq \Inc(x)$ and $D\subseteq \Dec(x)$. Then, it is clear that a tree in $[S,T]$ satisfies $C\subseteq \Inc(x)$ and $\overline{J}\cap \Inc(x) = \emptyset$.

Let $j\lhd i$ be a decreasing relation in $I$ which cannot be written as a concatenation of two smaller relations (i.e. a cover relation in the poset of decreasing relations of $I$). Then, using the definition of interval-poset, it is not difficult to check that either $j\lhd i$ is a cover relation of $I$, or there exists $i<j<k$ such that $j\lhd i = j\lhd k \lhd i$ where $k\lhd i$ is a cover relation of $I$. 

Let $x$ be a tree such that $C\subseteq x$ and $\overline{J}\cap x =\emptyset$. We show by induction on the length of the relations that $D\subseteq I$.

Let $i+1\lhd i\in D$. By the discussion above, there is $i+1\leqslant k$ such that $i+1\lhd k \in C$ and $k\lhd i\in J$. By hypothesis $i+1\lhd k\in \Inc(x)$ and $i\lhd k\notin \Inc(x)$.

Since $x$ is simple, by Proposition \ref{simple} $i\lhd i+1 \in x$ or $i+1\lhd i \in x$. In the first case, by transitivity we obtain the relation $i\lhd k \in x$. This contradicts $x\cap \overline{J} = \emptyset$. 

Let $j\lhd i \in D$. By induction we can assume that $j\lhd i$ is a cover relation in the poset of decreasing relations. Then, there is $j\leqslant k$ such that $j\lhd k\in C\subseteq \Inc(x)$ and $k\lhd i\in J$. Since $x$ is simple, there is $i\leqslant z\leqslant j$ such that $i\lhd z\in x$ and $j\lhd z\in x$. 

If $i<z<j$, then by the interval-poset condition the relation $z\lhd i$ is in $I$. And by induction it is also in $x$. So there are two possibilities: either $z=i$ or $z=j$. The second possibility implies the existence of the relation $i\lhd k\in x$ which contradicts $x\cap \overline{J}=\emptyset$.

\end{proof}
As explained in Section \ref{representation}, the Nakayama functor sends the projective indecomposable module $P_T$ to the injective indecomposable module $I_T$. If $x$ is the interval-poset corresponding to a binary tree $T$, we obtain the interval-poset of $P_T$ by considering the increasing relations of $x$ and we obtain $I_T$ by considering the decreasing relations. 

In the facts, we have a projective interval-poset $P$ and we want to find the injective interval-poset corresponding to $\Se(P)$ without having to construct the tree $x$. In other terms, we need to understand how the decreasing relations of a simple interval-poset are characterized by its increasing relations. 

\begin{lemma}\label{rel_simple}
Let $x$ be a simple interval-poset. Let $C$ be the set of increasing relations of $x$. The set of decreasing relations of $x$ is given by

\[C^0 = \{j\lhd i\ ;\ \forall i<s \leqslant j,\ i\lhd s \notin C\}. \]

\end{lemma}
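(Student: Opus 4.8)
The plan is to prove the set equality $\Dec(x) = C^0$ by double inclusion, working purely with the interval-poset axioms together with the simplicity criterion of Proposition~\ref{simple}. Here $x$ is a simple interval-poset, so it is the interval-poset of a single binary tree, $C = \Inc(x)$ is its set of increasing relations, and the goal is to recover $\Dec(x)$ from $C$ alone. Throughout I note that both a decreasing relation and a member of $C^0$ are of the form $j \lhd i$ with $i < j$ (the defining range $i < s \leqslant j$ being nonempty forces $i<j$), so I only need to compare the two descriptions on such pairs.

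For the inclusion $\Dec(x) \subseteq C^0$, I would start from a decreasing relation $j \lhd i$ with $i < j$ and invoke the interval-poset condition~$(2)$: since $j \lhd i$ and $i < j$, every integer $b$ with $i < b < j$ satisfies $b \lhd i$. Now suppose, for contradiction, that $i \lhd s \in C$ for some $s$ with $i < s \leqslant j$. If $s < j$, then $s \lhd i$ (by the previous sentence) and $i \lhd s$ both hold, contradicting antisymmetry of the poset; and if $s = j$, then $i \lhd j$ together with $j \lhd i$ gives the same contradiction. Hence no such $s$ exists, so $j \lhd i \in C^0$. This direction uses only the poset axioms and the interval-poset condition.

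For the reverse inclusion $C^0 \subseteq \Dec(x)$, which is the crux, simplicity enters. Given $j \lhd i \in C^0$ with $i < j$, I would apply Proposition~\ref{simple} to the pair $i < j$ to obtain some $z$ with $i \leqslant z \leqslant j$ satisfying $i \lhd z$ and $j \lhd z$. If $z > i$, then $z$ lies in the range $(i,j]$ and $i \lhd z$ is an increasing relation, i.e.\ $i \lhd z \in C$, directly contradicting the defining property of $C^0$. Therefore $z = i$ is forced, and then $j \lhd z = i$ exhibits $j \lhd i$ as a genuine decreasing relation of $x$, completing the proof.

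The main obstacle is precisely this second inclusion: the first one is a formal consequence of antisymmetry, but producing a decreasing relation out of the mere \emph{absence} of increasing relations requires the rigidity of simple interval-posets. Proposition~\ref{simple} supplies exactly the witness $z$, and the key observation is that the $C^0$ condition is what forbids this witness from being strictly larger than $i$, pinning it to $z = i$. As a sanity check one could instead argue on the binary search tree, where $j \lhd i$ means $j$ lies in the right subtree of $i$ while the $C^0$ condition says $i$ lies in the left subtree of no $s \in (i,j]$; analyzing the least common ancestor of $i$ and $j$ recovers the same equivalence. I expect the poset-theoretic argument above to be cleaner, since it avoids reconstructing the tree explicitly.
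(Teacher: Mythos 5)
Your proof is correct and follows essentially the same route as the paper: the forward inclusion via the interval-poset condition (the paper leaves the antisymmetry step implicit, which you spell out), and the reverse inclusion by applying the simplicity criterion to the pair $i<j$ and forcing the witness $z$ to equal $i$. No gaps.
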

\begin{proof}
If $j \lhd i$ is a decreasing relation of $x$, then by the interval-poset condition for $i\leqslant s < j$ there is a relation $s\lhd i$ in $x$. This implies that $j\lhd i \in C^0$. 

Conversely, let $j\lhd i \in C^0$. Since $x$ is simple, there exists $i\leqslant t \leqslant j$ such that $i\lhd t$ and $j\lhd t$. The only possibility is to have $t=i$, so the decreasing relation $j\lhd i$ is in $x$.
\end{proof}
\begin{proposition}\label{serre_rong}
Let $P_C$ be a projective interval-poset with set of increasing relations $C$. Then, 
\[
\Se(P_C) \cong I_{C^0},
\]
where $I_{C^0}$ is the injective interval-poset with set of decreasing relations $C^0$. 
\end{proposition}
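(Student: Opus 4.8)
The plan is to reduce the statement to the single binary tree underlying $P_C$ and then match the relevant sets of relations across its projective, simple, and injective incarnations. First I would observe that, since $P_C$ is projective, Proposition \ref{dico} forces $\Dec(P_C)=\emptyset$, so $P_C$ is exactly the projective indecomposable associated with the unique binary tree $T$ satisfying $\Inc(T)=C$. Concretely, $P_C$ is the interval-poset of $[T,\hat 1]$, where $\hat 1$ denotes the maximal element of $\Tam_n$ (the left comb), and by the Châtel--Pons bijection its set of increasing relations is precisely $\Inc(T)=C$ while its set of decreasing relations is empty.

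Next I would invoke the description of the Serre functor recalled in Section \ref{representation}, which sends the projective indecomposable $P_T$ to the injective indecomposable $I_T$. As explained in the discussion preceding the statement, $I_T$ is the interval-poset of $[\hat 0, T]$, where $\hat 0$ is the minimal element of $\Tam_n$; hence $I_T$ has no increasing relations and its decreasing relations are exactly $\Dec(T)$. Therefore $\Se(P_C)\cong I_T$ is the injective interval-poset whose set of decreasing relations is $\Dec(T)$, and the whole statement boils down to the identity $\Dec(T)=C^0$.

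To obtain this identity I would pass through the \emph{simple} interval-poset $x$ attached to $T$, namely the interval-poset of $[T,T]$. By the bijection of Theorem \ref{bij}, this $x$ has increasing relations $\Inc(T)=C$ and decreasing relations $\Dec(T)$. Now Lemma \ref{rel_simple}, applied to the simple interval-poset $x$ with $\Inc(x)=C$, computes its set of decreasing relations to be precisely $C^0$. Thus $\Dec(T)=C^0$, and combining this with the previous paragraph yields $\Se(P_C)\cong I_{C^0}$.

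I do not expect a genuine obstacle here: the argument is essentially a synthesis of Proposition \ref{dico}, the concrete form of the Serre/Nakayama functor, and Lemma \ref{rel_simple}. The only point requiring care is the bookkeeping of which relations survive in the three interval-posets $[T,\hat 1]$, $[T,T]$, and $[\hat 0, T]$ attached to the same tree $T$. The key conceptual step is recognizing that the decreasing relations of the injective $I_T$ coincide with those of the simple module $x$ (both equal $\Dec(T)$), since it is exactly this coincidence that allows Lemma \ref{rel_simple} to be applied to translate the data from the increasing relations $C$ into the decreasing relations $C^0$.
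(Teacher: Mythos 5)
Your proposal is correct and follows essentially the same route as the paper: the paper's proof is literally ``It is an immediate corollary of Lemma \ref{rel_simple}'', with the surrounding discussion (the Nakayama functor sending $P_T$ to $I_T$, and the identification of the increasing/decreasing relations of $P_T$, $I_T$ with those of the simple interval-poset of $T$) supplying exactly the bookkeeping you spell out. Your write-up just makes explicit the steps the paper treats as immediate.
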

\begin{proof}
It is an immediate corollary of Lemma \ref{rel_simple}.
\end{proof}
We are now almost ready to produce projective resolutions for the intervals of the Tamari lattices in terms of interval-posets. 

Let $(I,\lhd)$ be an interval-poset of size $n$. Let $C = \Inc(I)$. Let $D=\Dec(I)$. And let $J$ be the set of decreasing relations that are in the Hasse diagram of $I$.

If $f = j \lhd i$ is a relation of $I$, we denote by $\overline{f}$ the relation $i\lhd j$. 

If $R\subseteq J$, we denote by $C+\overline{R}$ the smallest interval-poset containing the relations of $C$ and the relations $\overline{f}$ such that $f\in R$. In other words, $C+\overline{R}$ is the \emph{interval-poset closure} of $C\sqcup \overline{R}$. We denote by $P_{C+\overline{R}}$ the interval-poset consisting of the relations of $C+\overline{R}$. By Proposition \ref{dico} it is a projective interval-poset. 

If $R_1 \subset R_2 \subset J$, then $C\subset C+ \overline{R_1} \subset C+ \overline{R_2}$. By Proposition \ref{dico}, we have injective morphisms $P_{ C+ \overline{R_2}} \to P_{C+\overline{R_1}} \to P_C$.

Let $\Delta=\Delta^{|J|-1}$ be a standard $(|J|-1)-simplex$. We identify the vertices of $\Delta$ with the elements in $J$. Let $\Po(I)$ be the following complex of projective modules where the morphisms are given by the (signed) embedding of projective modules:
\[
P_{C+\overline{J}}\to \bigoplus_{\substack{R\subset J \\ |R|=|J|-1}} P_{C+ \overline{R}} \to \cdots \to \bigoplus_{\substack{R\subset J \\ |R|=1}} P_{C+\overline{R}} \to P_C,
\]
where $P_C$ is in degree $0$. It is well known that choosing an orientation of $\Delta$ allows us to assign a sign to each of the monomorphisms in such a manner that $\mathcal{P}(I)$ is a chain complex. In the derived category, all the different choices lead to isomorphic complexes.  

\begin{lemma}\label{reso_proj}
$\Po(I)$ is a projective resolution of the interval $I = [S ; T]$
\end{lemma}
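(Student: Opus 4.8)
The plan is to prove that $\Po(I)$ is a projective resolution of $I=[S,T]$ by computing its homology one basis vector at a time. Every term is a direct sum of projective interval-posets $P_{C+\overline{R}}$, and the first step is to describe each such module concretely. Applying Lemma \ref{interval_char} to a projective interval-poset (whose set of Hasse decreasing relations is empty) shows that $P_{C+\overline{R}}$ has $\kk$-basis $\{e_x : C\cup\overline{R}\subseteq\Inc(x)\}$, where $\overline{R}=\{\overline{f}:f\in R\}$ and the trees $x$ range over the associated Tamari interval; here I use that $\Inc(x)$ itself satisfies the interval-poset conditions, so containing the closure $C+\overline{R}$ is the same as containing $C\cup\overline{R}$. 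The second ingredient of this step is that each differential of $\Po(I)$ is a signed sum of the canonical embeddings $P_{C+\overline{R}}\to P_{C+\overline{R'}}$ (with $R'=R\setminus\{f\}$), which by Propositions \ref{basic_module} and \ref{dico} are monomorphisms realized, up to the chosen sign, as $e_x\mapsto e_x$ on every tree $e_x$ lying in both intervals.

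Consequently $\Po(I)$ decomposes, as a complex of $\kk$-vector spaces, into a direct sum over all trees $x$ of "fibers" $\Po(I)_x$, where $\Po(I)_x$ gathers the copies of $\kk e_x$ occurring in the various summands. The next step is to identify each fiber. Fix a tree $x$; if $C\not\subseteq\Inc(x)$ then $e_x$ occurs nowhere and $\Po(I)_x=0$, so assume $C\subseteq\Inc(x)$ and put
\[
J_x=\{f\in J : \overline{f}\in\Inc(x)\}.
\]
By the basis description above, $e_x$ appears in $P_{C+\overline{R}}$ precisely when $\overline{R}\subseteq\Inc(x)$, that is when $R\subseteq J_x$. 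Hence $\Po(I)_x$ is the complex whose degree-$k$ term is $\bigoplus_{R\subseteq J_x,\,|R|=k}\kk$ with boundary the alternating sum $[R]\mapsto\sum_{f\in R}\pm[R\setminus\{f\}]$. This is exactly the augmented simplicial chain complex of the full simplex on the vertex set $J_x$ (with the empty face sitting in degree $0$), which computes the reduced homology of that simplex.

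The third step is the homological conclusion. A nonempty full simplex is contractible, so its augmented chain complex is acyclic: if $J_x\neq\emptyset$ then $\Po(I)_x$ is exact, whereas if $J_x=\emptyset$ then $\Po(I)_x$ is just $\kk e_x$ in degree $0$. Therefore $H_k(\Po(I))=0$ for $k\neq 0$, and $H_0(\Po(I))=\bigoplus_x\kk e_x$, the sum running over the trees $x$ with $C\subseteq\Inc(x)$ and $J_x=\emptyset$. By definition of $J_x$ the condition $J_x=\emptyset$ is precisely $\overline{J}\cap\Inc(x)=\emptyset$, so by Lemma \ref{interval_char} these trees are exactly the elements of $[S,T]$. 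This identifies $H_0(\Po(I))$ with $I$ and finishes the proof.

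The step I expect to be the main obstacle is the first one: verifying that the canonical morphisms act as $\pm$identity on each shared tree-basis vector, so that the differential of $\Po(I)$ restricts on each fiber to the genuine simplicial boundary map with correct signs. This means combining the one-dimensionality of the relevant Hom-spaces from Proposition \ref{dico} with the explicit identification of a nonzero such morphism as the inclusion of the smaller interval module into the larger, and checking that the global sign convention coming from the chosen orientation of $\Delta$ induces, for every $x$ simultaneously, the face maps of the simplex on $J_x$. Once this compatibility is in place, the remainder is the standard acyclicity of the augmented chain complex of a simplex.
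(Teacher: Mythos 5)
Your proof is correct and follows the same route as the paper: the paper's (much terser) argument likewise identifies $H_0$ as the largest quotient of $P_C$ supported on the trees $x$ with $C\subseteq\Inc(x)$ and $\Inc(x)\cap\overline{J}=\emptyset$ and invokes Lemma \ref{interval_char}, leaving the vanishing of higher homology as ``easy to see.'' Your fiber-by-fiber decomposition into augmented chain complexes of simplices on the sets $J_x$ is exactly the mechanism behind that claim, so you have simply made the paper's proof explicit.
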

\begin{proof}
It is easy to see that the homology is concentrated in degree zero. It is the largest quotient of $P_C$ which is supported by the set of trees $x$ such that $C \subseteq \Inc(x)$ and $\Inc(x) \cap \overline{J}=\emptyset$. So by Lemma \ref{interval_char}, the homology of the complex $\mathcal{P}(I)$ is isomorphic to the interval $[S,T]$. 

The surjective morphism from $P_C$ to $I$ induces a quasi-isomorphism between $\mathcal{P}(I)$ and $I$.
\end{proof}
\begin{remark}
In Lemma \ref{reso_proj} the interval $I$ is not assumed to be exceptional. Dually we can produce an injective resolution by adding to the set of decreasing relations of $I$ the opposite of the increasing covers of $I$.
\end{remark}

 \section{Serre functor on noncrossing trees}\label{se_fun}
 In this section we compute the image of an exceptional interval-poset by the Serre functor. The main ingredient is the existence of boolean projective resolutions obtained in Lemma \ref{reso_proj} and the description of the action of the Serre functor on a projective interval-poset obtained in Proposition \ref{serre_rong}. This description is obtained in terms of interval-posets and could be entirely proved in terms of interval-posets. However, the proof would be very technical and rather obscure. So, we use as much as possible the noncrossing trees and the bijections $\phi$ and $\theta$ described in Section \ref{bij_ip}. 
 \subsection{Descents and rises of noncrossing trees}
 Let $T$ be a noncrossing tree in the circle $\mathcal{C}_n$. The edges of $T$ are oriented towards the vertex $n+1$. 
 
 Let $f=[a-b]$ be an edge in $T$. Since there is no loop made of edges, and since any pair of vertices are connected by a succession of edges, there is a maximal vertex $i_f$ such that $a\leqslant i_f< b$ and $i_f$ is connected to $a$ in $T-\{[a-b]\}$. Similarly, there is a vertex $j_f$ maximal (in the cyclic ordering of the vertices) such that $b\leqslant j_f < a$ and $j_f$ is connected to $b$ in $T-\{[a-b]\}$.
 
 If the edge $f=[a-b]$ is a descent, it means that $a$ is after $b$ in the path that goes from $b$ to $n+1$. The path connecting $1$ to $n+1$ cannot go through the vertices $a$ or $b$. So, there must be an edge in $T$ between an element $1\leqslant a_1 \leqslant a$ and $b<b_1 \leqslant n+1$. If $a_1$ is minimal and $b_1$ is maximal for this property, the edge $[a_1,b_1]$ is the first (from the top to the bottom) edge that separates $[a-b]$ from the base. The situation is illustrated in the left case of Figure \ref{descente-montee}.
 
If the edge $f = [a-b]$ is a rise, the situation is slightly mode complicated. Since $a$ is before $b$ in the path from $a$ to $n+1$, we have $j_{f} = n+1$ or $1\leqslant j_{f}<a$. If $j_f = n+1$, then there is no edge that separates $f$ from the base. Otherwise, there is such an edge. The two situations are illustrated in Figure \ref{descente-montee}.
\begin{figure}[h]
\centering
\begin{tikzpicture}[scale=0.85]
\draw[thin] (0,0) circle (2.5cm);

\coordinate (Nx) at (-85:2.5);
\draw (-85:2.5+0.25) node{$1$};
\fill[black] (Nx) circle (0.05 cm);
\coordinate (Ny) at (-65:2.5);
\draw (-65:2.5+0.25) node{$n+1$};
\fill[black] (Ny) circle (0.05 cm);
\coordinate (N1) at (-100:2.5);
\draw (-100:2.5+0.25) node{$a_1$};
\fill[black] (N1) circle (0.05 cm);
\coordinate (N2) at (-120:2.5);
\draw (-120:2.5+0.25) node{};
\fill[black] (N2) circle (0.05 cm);
\coordinate (N3) at (-140:2.5);
\draw (-140:2.5+0.25) node{};
\fill[black] (N3) circle (0.05 cm);
\coordinate (N4) at (-160:2.5);
\draw (-160:2.5+0.25) node{$a$};
\fill[black] (N4) circle (0.05 cm);
\coordinate (N5) at (-180:2.5);
\draw (-180:2.5+0.25) node{};
\fill[black] (N5) circle (0.05 cm);
\coordinate (N6) at (-200:2.5);
\draw (-200:2.5+0.25) node{};
\fill[black] (N6) circle (0.05 cm);
\coordinate (N7) at (-220:2.5);
\draw (-220:2.5+0.25) node{$i_f$};
\fill[black] (N7) circle (0.05 cm);
\coordinate (N8) at (-240:2.5);
\draw (-240:2.5+0.4) node{$i_f+1$};
\fill[black] (N8) circle (0.05 cm);
\coordinate (N9) at (-260:2.5);
\draw (-260:2.5+0.25) node{};
\fill[black] (N9) circle (0.05 cm);
\coordinate (N10) at (-280:2.5);
\draw (-280:2.5+0.25) node{};
\fill[black] (N10) circle (0.05 cm);
\coordinate (N11) at (-300:2.5);
\draw (-300:2.5+0.25) node{$b$};
\fill[black] (N11) circle (0.05 cm);
\coordinate (N12) at (-320:2.5);
\draw (-320:2.5+0.25) node{};
\fill[black] (N12) circle (0.05 cm);
\coordinate (N13) at (-340:2.5);
\draw (-340:2.5+0.3) node{$j_f$};
\fill[black] (N13) circle (0.05 cm);
\coordinate (N14) at (-360:2.5);
\draw (-360:2.5+0.7) node{$j_f+1$};
\fill[black] (N14) circle (0.05 cm);
\coordinate (N15) at (-20:2.5);
\draw (-20:2.5+0.25) node{$b_1$};
\fill[black] (N15) circle (0.05 cm);

\draw[thick] (N15)--(N1);
\draw[thick,dashed,red] (N15) to [out=150,in=-170] (N14);
\draw[thick,dashed,blue] (N2) to [out=60,in=40] (N3)
                         (N5)to [out=45,in=-15](N6)
                         (N13) to [out=180,in=-160] (N12);
\draw[thick,blue] (N1)to [out=115,in=45](N2)
			 (N3)to [out=60,in=30](N4) to [out=50,in=-10](N5)
			 (N6) to [out=15,in=-15] (N7)
			 (N11)--(N4)
			 (N12) to [out=-160,in=-140] (N11);
\draw[thick,red] (N8) to [out=-45,in=-115] (N9)
                 (N10) to [out=-90,in=-130] (N11);
\draw[thick,red,dashed] (N9) to [out=-45,in=-130] (N10);
\end{tikzpicture}
\begin{tikzpicture}[scale=0.85]
\draw[thin] (0,0) circle (2.5cm);

\coordinate (Nx) at (-85:2.5);
\draw (-85:2.5+0.25) node{$1$};
\fill[black] (Nx) circle (0.05 cm);
\coordinate (Ny) at (-65:2.5);
\draw (-65:2.5+0.25) node{$n+1$};
\fill[black] (Ny) circle (0.05 cm);
\coordinate (N1) at (-100:2.5);
\draw (-100:2.5+0.25) node{$a_1$};
\fill[black] (N1) circle (0.05 cm);
\coordinate (N2) at (-120:2.5);
\draw (-120:2.5+0.25) node{$j_f$};
\fill[black] (N2) circle (0.05 cm);
\coordinate (N3) at (-140:2.5);
\draw (-140:2.5+0.7) node{$j_f+1$};
\fill[black] (N3) circle (0.05 cm);
\coordinate (N4) at (-160:2.5);
\draw (-160:2.5+0.25) node{$a$};
\fill[black] (N4) circle (0.05 cm);
\coordinate (N5) at (-180:2.5);
\draw (-180:2.5+0.25) node{};
\fill[black] (N5) circle (0.05 cm);
\coordinate (N6) at (-200:2.5);
\draw (-200:2.5+0.25) node{};
\fill[black] (N6) circle (0.05 cm);
\coordinate (N7) at (-220:2.5);
\draw (-220:2.5+0.25) node{$i_f$};
\fill[black] (N7) circle (0.05 cm);
\coordinate (N8) at (-240:2.5);
\draw (-240:2.5+0.4) node{$i_f+1$};
\fill[black] (N8) circle (0.05 cm);
\coordinate (N9) at (-260:2.5);
\draw (-260:2.5+0.25) node{};
\fill[black] (N9) circle (0.05 cm);
\coordinate (N10) at (-280:2.5);
\draw (-280:2.5+0.25) node{};
\fill[black] (N10) circle (0.05 cm);
\coordinate (N11) at (-300:2.5);
\draw (-300:2.5+0.25) node{$b$};
\fill[black] (N11) circle (0.05 cm);
\coordinate (N12) at (-320:2.5);
\draw (-320:2.5+0.25) node{};
\fill[black] (N12) circle (0.05 cm);
\coordinate (N13) at (-340:2.5);
\draw (-340:2.5+0.3) node{};
\fill[black] (N13) circle (0.05 cm);
\coordinate (N14) at (-360:2.5);
\draw (-360:2.5+0.4) node{$b_1$};
\fill[black] (N14) circle (0.05 cm);

\draw[thick] (N14)--(N1);
\draw[thick,dashed,blue] (N1)to [out=115,in=45](N2)
                         (N5)to [out=45,in=-15](N6);
\draw[thick,blue] 
			 (N4) to [out=50,in=-10](N5)
			 (N6) to [out=15,in=-15] (N7);
\draw[thick,red] (N8) to [out=-45,in=-115] (N9)
                 (N10) to [out=-90,in=-130] (N11)
                 (N11)--(N4)
                 
			 (N12) to [out=-160,in=-140] (N11)
			 (N13) to [out=180,in=-160] (N14);
\draw[thick,red,dashed] (N9) to [out=-45,in=-130] (N10)
                         (N13) to [out=180,in=-160] (N12)
                         (N3)to [out=60,in=30] (N4);
\end{tikzpicture}
\begin{tikzpicture}[scale=0.85]
\draw[thin] (0,0) circle (2.5cm);

\coordinate (Nx) at (-85:2.5);
\draw (-85:2.5+0.25) node{$1$};
\fill[black] (Nx) circle (0.05 cm);
\coordinate (Ny) at (-65:2.5);
\draw (-65:2.5+0.5) node{{\tiny $n+1=j_f$}};
\fill[black] (Ny) circle (0.05 cm);
\coordinate (N1) at (-100:2.5);
\draw (-100:2.5+0.25) node{};
\fill[black] (N1) circle (0.05 cm);
\coordinate (N3) at (-140:2.5);
\draw (-140:2.5+0.7) node{};
\fill[black] (N3) circle (0.05 cm);
\coordinate (N4) at (-160:2.5);
\draw (-160:2.5+0.25) node{$a$};
\fill[black] (N4) circle (0.05 cm);
\coordinate (N5) at (-180:2.5);
\draw (-180:2.5+0.25) node{};
\fill[black] (N5) circle (0.05 cm);
\coordinate (N6) at (-200:2.5);
\draw (-200:2.5+0.25) node{};
\fill[black] (N6) circle (0.05 cm);
\coordinate (N7) at (-220:2.5);
\draw (-220:2.5+0.25) node{$i_f$};
\fill[black] (N7) circle (0.05 cm);
\coordinate (N8) at (-240:2.5);
\draw (-240:2.5+0.4) node{$i_f+1$};
\fill[black] (N8) circle (0.05 cm);
\coordinate (N9) at (-260:2.5);
\draw (-260:2.5+0.25) node{};
\fill[black] (N9) circle (0.05 cm);
\coordinate (N10) at (-280:2.5);
\draw (-280:2.5+0.25) node{};
\fill[black] (N10) circle (0.05 cm);
\coordinate (N11) at (-300:2.5);
\draw (-300:2.5+0.25) node{$b$};
\fill[black] (N11) circle (0.05 cm);
\coordinate (N12) at (-320:2.5);
\draw (-320:2.5+0.25) node{};
\fill[black] (N12) circle (0.05 cm);
\coordinate (N15) at (-20:2.5);
\draw (-20:2.5+0.25) node{};
\fill[black] (N15) circle (0.05 cm);

\draw[thick,dashed,blue] 
                         (N5)to [out=45,in=-15](N6);
\draw[thick,blue] 
			 (N4) to [out=50,in=-10](N5)
			 (N6) to [out=15,in=-15] (N7);
\draw[thick,red] (Nx)to [out=115,in=45](N1)
                 (N8) to [out=-45,in=-115] (N9)
                 (N10) to [out=-90,in=-130] (N11)
                 (N11)--(N4)
			 (N12) to [out=-160,in=-140] (N11);
\draw[thick,red,dashed] (N1) to [out=115,in=45](N3)
						 (N12) to [out=-130,in=135] (N15)
                         (N3)to [out=60,in=30] (N4)
                         (N9)to [out=-60,in=-130] (N10)
                         (N15) to[out=-170,in=90] (Ny);
\end{tikzpicture}
\caption{Shape of the subtrees determined by the edge $[a-b]$. Edges in blue are descents, and edges in red are rises.}\label{descente-montee}
\end{figure}

\begin{lemma}\label{6.1}
Let $I$ be an exceptional interval-poset of size $n$. Let $x,y\in \{1,2,\cdots, n\}$.
\begin{enumerate}
\item There is an increasing cover relation $x\lhd y$ in $I$ if and only if $x$ is the label of a descent $f$ in $\psi(I)$. In this case $x=i_f$ and $y=j_f$. 
\item There is a decreasing cover relation $x\lhd y$ in $I$ if and only if $x$ is the label of a rise $f$ in $\psi(I)$ such that $j_f \neq n+1$. In this case $x = i_f$ and $y=j_f$.  
\item $x$ is a maximal element of $I$ if and only if it is the label of a rise $f$ in $\psi(I)$ such that $j_f = n+1$.
\end{enumerate}
\end{lemma}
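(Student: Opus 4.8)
The plan is to exploit the trichotomy that the three items impose on an element $x$ and to match it with a trichotomy coming from the geometry of the edge of $\psi(I)$ labelled by $x$. First I would record the poset-theoretic input: in an interval-poset the increasing relations and the decreasing relations each form a forest (Lemma $2.5$ of \cite{pons_chatel}), so $x$ has at most one increasing cover and at most one decreasing cover, and exceptionality (Definition \ref{exc}) forbids $x$ from having both at once. Hence every $x$ falls into exactly one of the three mutually exclusive cases of the statement: a unique increasing cover, a unique decreasing cover, or maximality. Since $\psi$ and $\psi^{-1}$ are mutually inverse (Theorem \ref{image_noncrossing}) and the labelling rule defining $\psi^{-1}$ assigns to the edge $f$ of $\psi(I)$ carrying index $x$ exactly the open side it cuts off, I would first observe that this label is $x$, which already gives $i_f = x$ in all cases.

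Next I would analyse the orientation of $f$. Writing $x_1 = \min\{z : z \lhd x\}$ and $x_2 = \max\{z : z \lhd x\}$, the edge is $f = [x_1 - (x_2+1)]$, and removing it splits the tree into the component of $x_1$ and the component of $x_2+1$. Because all edges point towards $n+1$, the edge $f$ is a descent precisely when $n+1$ lies in the component of $x_1$ (so $f$ points $(x_2+1)\to x_1$) and a rise precisely when $n+1$ lies in the component of $x_2+1$ (so $f$ points $x_1 \to (x_2+1)$). I would then read off $j_f$, the cyclically top vertex of the component of $x_2+1$ starting at $x_2+1$: it lands in one of the three ranges $\{x_2+1,\dots,n\}$, the single value $n+1$, or the wrapped range $\{1,\dots,x_1-1\}$, corresponding respectively to $n+1$ lying on the $x_1$-side (descent), to $n+1$ being the top with no further wrap (rise, $j_f=n+1$), and to the component wrapping past $n+1$ (rise, $j_f<x_1$). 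Since $j_f\geq x_2+1>x$ in the first range and $j_f<x_1\leq x$ in the third, the sign of $j_f-x$ already separates the increasing from the decreasing situation.

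The final and most delicate step is to identify $j_f$ with the covering element $y$. Here I would use the description of the relation of $I=\psi^{-1}(\psi(I))$: one has $i\lhd j$ exactly when the edge labelled $i$ is separated from the base by the edge labelled $j$, so $y$ covers $x$ iff the edge labelled $y$ is the \emph{first} edge separating $f$ from the base. I would check, case by case against Figure \ref{descente-montee}, that this innermost separating edge $g$ shares the endpoint $x_1$ with $f$ in the descent case and the endpoint $x_2+1$ in the rise case, and that in either case its label equals $j_f$; this yields $y=j_f$ together with $x\lhd y$ increasing (item 1) or decreasing (item 2). When the component of $x_2+1$ reaches $n+1$ without wrapping, no edge separates $f$ from the base, so $x$ has no cover and is maximal, giving item 3; conversely a descent always admits such a separating edge, namely the one continuing from the head $x_1$ towards $n+1$, so a maximal element can only be a rise, and necessarily one with $j_f=n+1$.

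The hard part will be the endpoint-sharing bookkeeping of the third paragraph: making rigorous, from the definitions of $i_f$ and $j_f$ and the noncrossing condition, that the innermost separating edge attaches to $f$ exactly at $x_1$ (descent) or at $x_2+1$ (rise) and carries the label $j_f$. This is precisely the content encoded in the three pictures of Figure \ref{descente-montee}, and once it is established the three equivalences, together with the identifications $x=i_f$ and $y=j_f$, follow at once from the trichotomy set up in the first paragraph.
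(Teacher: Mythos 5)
Your overall architecture is essentially the paper's: both proofs identify the label of the edge attached to $x$ with $i_f$, and identify the covering element $y$ with the label of the innermost edge separating that edge from the base, reading the geometry off Figure \ref{descente-montee}. (The only organizational difference is that you obtain the converse implications by playing the poset trichotomy against the edge trichotomy, whereas the paper argues the converse of item (1) directly; that is a harmless variation.) However, the step you yourself single out as the crux is stated incorrectly: the innermost separating edge $g=[a_1-b_1]$ does \emph{not} in general share the endpoint $x_1$ with $f$ in the descent case, nor the endpoint $x_2+1$ in the rise case. The paper's own figure shows $a_1<a$ joined to $a$ by a chain of further descents (and, in the rise case, $b_1>b$ joined to $b$ by a chain of rises). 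Concretely, take $T=\{[1-6],[2-6],[2-5],[2-3],[3-4]\}$ in $\mathcal{C}_5$: the edge $f=[3-4]$ is a descent labelled $3$ with $i_f=3$, $j_f=4$, and the innermost edge separating it from the base is $[2-5]$, which has neither $3$ nor $4$ as an endpoint. So the case check you propose to carry out against the figure would fail as formulated.

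The conclusion you need from that step --- that the label of $g$ equals $j_f$ --- is nevertheless true, and this is exactly what the paper extracts from the figure; it just has a different justification. The cell lying between $f$ and $g$ is bounded by $f$, by $g$, by a path of edges joining $a$ to $a_1$, and by a path joining $b$ to $b_1$; its unique open side is $[j_f-(j_f+1)]$, because by definition $j_f$ is the last vertex (in the cyclic order starting at $b$) of the component of $b$ in $T\setminus\{f\}$, so the side immediately after $j_f$ carries no edge while every other side on the arc from $a_1$ to $b_1$ is hidden behind an edge of one of the two connecting paths. Hence the label of $g$ is $j_f$, independently of any shared endpoint. With the endpoint-sharing claim replaced by this argument, your proof goes through and coincides with the paper's.
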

\begin{proof}
If $x$ is the label of a descent $f = [a-b]$ in $\psi(I)$, looking at Figure \ref{descente-montee}, we see that $x=i_f$ and the first edge between $f$ and the base is labeled by $j_f$, so in $I$ we have $i_f\lhd j_f$. It is an increasing relation since $i_f < j_f$. 

Conversely, if $x\lhd y$ is an increasing cover relation in $I$, then there are two edges $f=[a,b]$ and $f_1 = [a_1,b_1]$ in $\psi(I)$ respectively labeled by $x$ and $y$ and such that $f_1$ is the first edge that separates $f$ from the base. This implies that the tree has the shape of the leftmost or of the middle cases of Figure \ref{descente-montee}. Since $x<y$, the open side is between $b$ and $b_1$. This implies that the path from $b$ to $n+1$ goes through $a$. In other terms $f=[a-b]$ is a descent. 

The proofs of the second and third points are similar. 
\end{proof}
 \subsection{Image of an exceptional interval-poset by the Serre functor}
For all this section $I$ denotes an \emph{exceptional} interval-poset of size $n$. We simply denote by $C$ (resp. $D$) its set of increasing (resp. decreasing) relations. We denote by $J$ (resp. $K$) the set of decreasing (resp. increasing) relations of $I$ that are in the Hasse diagram of $I$. 

We will use the bijection $\theta$ introduced in Section \ref{bij_ip} in order to associate to $I$ a noncrossing tree. 

The first step is to understand the image of the interval-poset $P_{C+\overline{J}}$ by the Serre functor. For this we need to understand $(C+\overline{J})^0$. This is easier after using the bijection $\theta$. We encourage the reader to use Figure \ref{descente-montee} in all the different proofs. Most of the arguments are rather tedious but come from graphical evidences. 

\begin{lemma}\label{char+delta}
Let $x\in \{1,2,\cdots ,n\}$.  
\begin{enumerate}
\item $x+1 \lhd x \in (C+\overline{J})^0$ if and only if for every $1\leqslant e \leqslant x$, the edge $[e-(x+1)]$ is not in $\theta(I)$.
\item In this case, there is a rise $f$ in $\theta(I)$ starting at $x+1$ such that $x=j_f$. We denote by $[a_1-b_1]$ the first edge (from top to bottom) that separates $f$ from the base.
\item There is a relation $y\lhd x \in (C+\overline{J})^{0}$ if and only if there is a rise $f=[(x+1)-b]$ in $\theta(I)$ with $j_f = x$ and $x+1\leqslant y \leqslant b_1 -1$ where $[a_1-b_1]$ is the first edge that separates $f$ from the base. 
\end{enumerate} 
\end{lemma}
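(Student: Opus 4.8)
The plan is to read off $(C+\overline{J})^0$ from Lemma \ref{rel_simple} and then translate every membership condition ``$i\lhd s\in C+\overline{J}$'' into a statement about the edges of the noncrossing tree $\theta(I)$. Recall that under $\theta$ the cover relations of the augmented poset $\widehat{I}$ become the edges of $\theta(I)$, oriented towards $n+1$: a rise corresponds to an increasing cover (an element of $K$, or an edge to $n+1$ issued by a maximal element), while a descent corresponds to a decreasing cover, i.e.\ an element of $J$. Forming $C+\overline{J}$ amounts to keeping all increasing relations $C$ and reversing the descents $J$ into rises, so that $P_{C+\overline{J}}$ is projective and carries no decreasing relation. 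The key local dictionary I would establish first is: for an integer $x$ one has $x\lhd(x+1)\in C+\overline{J}$ if and only if $\theta(I)$ contains an edge $[e-(x+1)]$ with $1\leqslant e\leqslant x$.

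Both directions of this dictionary rest on the interval-poset condition $(1)$. For the implication from an edge to a relation: an edge $[e-(x+1)]$ with $e\leqslant x$ is either a rise, giving the increasing relation $e\lhd(x+1)\in C$, or a descent, giving $(x+1)\lhd e\in J$ and hence $e\lhd(x+1)\in\overline{J}$; in either case $e\lhd(x+1)\in C+\overline{J}$, and condition $(1)$ propagates this down to $x\lhd(x+1)$. Conversely, if $x\lhd(x+1)\in C+\overline{J}$, then $x+1$ has a lower cover $e'\lhd(x+1)$ in the projective poset $C+\overline{J}$; since $C+\overline{J}$ has no decreasing relation we must have $e'\leqslant x$, and tracing this cover back through the generators $C\sqcup\overline{J}$ produces an edge of $\theta(I)$ incident to $x+1$ from a vertex $\leqslant x$. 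With the dictionary in hand, part $(1)$ is immediate: by Lemma \ref{rel_simple} the only constraint for $x+1\lhd x\in(C+\overline{J})^0$ is that $x\lhd(x+1)\notin C+\overline{J}$ (the index $s$ ranges only over $s=x+1$), which by the dictionary says precisely that no edge $[e-(x+1)]$ with $e\leqslant x$ occurs.

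For part $(2)$ I would exploit the tree structure: since $\theta(I)$ is a noncrossing tree every vertex is joined to $n+1$, so under the hypothesis of $(1)$ the unique edge leaving $x+1$ on its path to $n+1$ cannot land in $\{1,\dots,x\}$ and must therefore be a rise $f=[(x+1)-b]$ with $b>x+1$. Reading off the invariants of Figure \ref{descente-montee} for this rise shows that the vertices cyclically connected to $b$ below $x+1$ stop exactly at $x$, i.e.\ $j_f=x$, and identifies the first edge $[a_1-b_1]$ separating $f$ from the base. Part $(3)$ then combines Lemma \ref{rel_simple} with the same picture: a decreasing relation $y\lhd x$ lies in $(C+\overline{J})^0$ iff $x\lhd s\notin C+\overline{J}$ for all $x<s\leqslant y$, and by the dictionary the first value of $s$ for which $x\lhd s$ becomes forced is $s=b_1$, the far endpoint of the separating edge, because $[a_1-b_1]$ yields $a_1\lhd b_1\in C+\overline{J}$ and condition $(1)$ then forces $x\lhd b_1$. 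Since $y>x$ is required, the admissible range is exactly $x+1\leqslant y\leqslant b_1-1$.

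The main obstacle is the converse half of the local dictionary, namely controlling how the interval-poset closure defining $C+\overline{J}$ interacts with the reversal of the descents: one must verify that reversing $J$ creates no spurious relation into $x+1$ beyond those coming from genuinely incident edges of $\theta(I)$, and that the lower covers of $x+1$ can always be traced back to such edges. This is the tedious ``graphical'' verification that Figure \ref{descente-montee} is designed to support, and it also requires separating the boundary case $j_f=n+1$ (where no separating edge exists, matching Lemma \ref{6.1}$(3)$) from the generic case, together with a careful confirmation that $[a_1-b_1]$ really is the first edge producing a forced relation $x\lhd b_1$, which underlies the range computation in part $(3)$.
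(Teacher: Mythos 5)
Your proposal is correct and follows essentially the same route as the paper: part (1) via the definition of $(-)^0$ from Lemma \ref{rel_simple} together with the dictionary ``$x\lhd(x+1)\in C+\overline{J}$ iff some edge $[e-(x+1)]$ with $e\leqslant x$ lies in $\theta(I)$'', part (2) via connectivity of the noncrossing tree, and part (3) via the minimality of the separating edge $[a_1-b_1]$ and the noncrossing condition. The tedious closure/graphical verifications you flag are exactly the ones the paper also leaves to Figure \ref{descente-montee}.
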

 \begin{proof}
 By definition $x+1\lhd x\in (C+\overline{J})^0$ if and only if $x\lhd x+1$ is not in $C+\overline{J}$. The relation $x\lhd x+1$ is in $C+\overline{J}$ if and only if $x\lhd x+1 \in C$ or there is an element $1\leq e \leq x$ such that $x+1\lhd e \in J$. The second case is equivalent to the existence of a descent $[e-(x+1)]$ in $\theta(I)$. In the first case, the relation may not be a cover relation in $I$. However it is easy to see that there is $1\leq e \leq x$ such that $x\lhd x+1 = x\lhd e \lhd x+1$ and $e\lhd x+1$ is a cover relation of $I$. In other words, $x\lhd x+1 \in C+\overline{J}$ if and only if there is an edge $[e-(x+1)]$ in $\theta(I)$ for $1\leq e \leq x$.

 Since $x+1$ is connected to $n+1$ and there is no edge from $x+1$ to the elements of $\{1,2,\cdots ,x\}$, there is a unique rise $[(x+1)-b]$ starting at $x+1$. Similarly, there is an edge between an element in $\{1,2,\cdots, x\}$ and an element in $\{b,b+1,\cdots, n+1\}$. It separates $[(x+1)-b]$ from the basis. We choose this edge to be minimal (from top to bottom) for this property. Graphically, the noncrossing tree has the shape of Figure \ref{cj0}.

If $y\lhd x\in (C+\overline{J})^0$, then by the interval-poset condition we see that $(x+1)\lhd x \in (C+\overline{J})^0$. So by the first part of the proof there is a rise $[x+1,b]$ with $j_f = x$. 

The existence of the edge $[a_1-b_1]$ implies that $b_1\lhd x\notin (C+\overline{J})^0$. If $(b_1-1)\lhd x\in (C+\overline{J})$, then there is $1\leq e\leq x$ and $x < s \leq b_1 -1$ such that $[e-s] \in \theta(I)$. Since $x+1 \leq s \leq b_1 - 1$, looking at Figure \ref{cj0}, we see that the noncrossing condition implies that $a_1 \leq e$ and $b\leq s$. Then, the minimality of $[a_1-b_1]$ contradicts the existence of the edge $[e-s]$. 

 Since this set $(C+\overline{J})^0$ is stable by the interval-poset condition, the result follows.

%
%
%
%

 \end{proof}
 \begin{figure}[h]
 \centering
 \begin{tikzpicture}[scale=0.85]
 \newcommand\rad{2.5}
\draw[thin] (0,0) circle (2.5cm);

\coordinate (Nx) at (-85:2.5);
\draw (-85:2.5+0.25) node{{\small $1$}};
\fill[black] (Nx) circle (0.05 cm);
\coordinate (Ny) at (-65:2.5);
\draw (-65:2.5+0.25) node{{\small $n+1$}};
\fill[black] (Ny) circle (0.05 cm);
\coordinate (N1) at (-100:2.5);
\draw (-100:2.5+0.25) node{$a_1$};
\fill[black] (N1) circle (0.05 cm);
\coordinate (N3) at (-140:2.5);
\draw (-140:2.5+0.8) node{{\small $x=j_f$}};
\fill[yellow!95!black] (N3) circle (0.08 cm);
\coordinate (N4) at (-160:2.5);
\draw (-160:2.5+0.6) node{{\small $x+1$}};
\fill[yellow!95!black] (N4) circle (0.05 cm);
\coordinate (N5) at (-180:2.5);
\draw (-180:2.5+0.25) node{};
\fill[yellow!95!black] (N5) circle (0.05 cm);
\coordinate (N6) at (-200:2.5);
\draw (-200:2.5+0.25) node{};
\fill[yellow!95!black] (N6) circle (0.05 cm);
\coordinate (N7) at (-220:2.5);
\draw (-220:2.5+0.25) node{$i_f$};
\fill[yellow!95!black] (N7) circle (0.05 cm);
\coordinate (N8) at (-240:2.5);
\draw (-240:2.5+0.4) node{$i_f+1$};
\fill[yellow!95!black] (N8) circle (0.05 cm);
\coordinate (N9) at (-260:2.5);
\draw (-260:2.5+0.25) node{};
\fill[yellow!95!black] (N9) circle (0.05 cm);
\coordinate (N10) at (-280:2.5);
\draw (-280:2.5+0.25) node{};
\fill[yellow!95!black] (N10) circle (0.05 cm);
\coordinate (N11) at (-300:2.5);
\draw (-300:2.5+0.25) node{$b$};
\fill[yellow!95!black] (N11) circle (0.05 cm);
\coordinate (N12) at (-320:2.5);
\draw (-320:2.5+0.25) node{};
\fill[yellow!95!black] (N12) circle (0.05 cm);
\coordinate (N13) at (-340:2.5);
\draw (-340:2.5+0.7) node{$b_1 -1$};
\fill[yellow!95!black] (N13) circle (0.08 cm);
\coordinate (N14) at (-360:2.5);
\draw (-360:2.5+0.4) node{$b_1$};
\fill[black] (N14) circle (0.05 cm);

\draw[thick] (N14)--(N1);
\draw[thick,dashed,blue] (N1)to [out=115,in=45](N3)
                         (N5)to [out=45,in=-15](N6);
\draw[thick,blue] 
			 (N4) to [out=50,in=-10](N5)
			 (N6) to [out=15,in=-15] (N7);
\draw[thick,red] (N8) to [out=-45,in=-115] (N9)
                 (N10) to [out=-90,in=-130] (N11)
                 (N11)--(N4)
                 
			 (N12) to [out=-160,in=-140] (N11);
\draw[thick,red,dashed] (N9) to [out=-45,in=-130] (N10)
                         (N12) to [out=180,in=-160] (N14);
\draw[yellow!95!black,very thick] (0,\rad) arc (90:20:\rad);
\draw[yellow!95!black,very thick] (0,\rad) arc (90:220:\rad);
\end{tikzpicture}
\caption{The elements of $(C+\overline{J})^0$ characterized by the rise $[(x+1)-b]$ are of the form $y\lhd x$ for $y$ a vertex in the thick yellow part. The edges in red are rises and the edges in blue are descents.}\label{cj0}
 \end{figure}
\begin{lemma}\label{c+j-r}
Let $R\subset J$. We identify the relations in $R$ with the corresponding edges in $\theta(I)$. Then,
\[
(C+\overline{J-R})^0 = (C+\overline{J})^0 + \{ j_f\lhd i_f\ ;\ f\in R\}. 
\]
\end{lemma}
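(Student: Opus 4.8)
The plan is to work directly from the definition of the operator $(-)^0$ given in Lemma \ref{rel_simple}, namely $(C')^0=\{j\lhd i\ ;\ \forall\, i<s\leqslant j,\ i\lhd s\notin C'\}$ for a set $C'$ of increasing relations, and to compare the two projective interval-posets $C+\overline{J-R}$ and $C+\overline{J}$. Since $J-R\subseteq J$, the closures satisfy $C+\overline{J-R}\subseteq C+\overline{J}$, and because the defining condition for $(-)^0$ only becomes easier to satisfy when the ambient set of increasing relations shrinks, we immediately get the inclusion $(C+\overline{J})^0\subseteq (C+\overline{J-R})^0$. The whole content of the lemma is therefore to identify the \emph{new} decreasing relations, that is the set $(C+\overline{J-R})^0\setminus (C+\overline{J})^0$, and to show that, once one interprets $+$ on the right-hand side as union followed by the interval-poset closure, this set equals $\{j_f\lhd i_f\ ;\ f\in R\}$.

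The crux is a sub-claim describing exactly which increasing relations disappear when one flips the descents of $R$ back: for an increasing relation $i\lhd s$ one should have $i\lhd s\in (C+\overline{J})\setminus (C+\overline{J-R})$ if and only if $i\lhd s$ is ``created'' by some $f\in R$, that is $i=i_f$ and $i_f<s\leqslant j_f$. I would prove this by reading off the noncrossing tree $\theta(I)$: identifying each $f\in R\subseteq J$ with its descent in $\theta(I)$ and using the shape of the subtree determined by that edge (Figure \ref{descente-montee} and Figure \ref{cj0}), the relation $\overline{f}=i_f\lhd j_f$ together with the interval-poset closure accounts precisely for the increasing relations $i_f\lhd s$ with $i_f<s\leqslant j_f$, while no such relation is forced by $C\cup \overline{J-R}$ through any other chain. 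Feeding this into the definition of $(-)^0$ then gives the statement: a decreasing relation $j\lhd i$ lies in $(C+\overline{J-R})^0$ but not in $(C+\overline{J})^0$ exactly when every obstruction $i\lhd s$ (with $i<s\leqslant j$) present in $C+\overline{J}$ has been removed, which by the sub-claim forces $i=i_f$ and $j=j_f$ for some $f\in R$. To keep the closure analysis local I would organise this as an induction on $|R|$, so that at each step only a single descent $g\in J\setminus R$ is flipped back and one analyses $(C+\overline{(J-R)-\{g\}})^0=(C+\overline{J-R})^0+\{j_g\lhd i_g\}$.

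The step I expect to be the main obstacle is controlling the interval-poset closure, in both directions. On one hand I must check that flipping a descent $f$ back genuinely removes every relation $i_f\lhd s$ in the range $i_f<s\leqslant j_f$ and does not leave one of them re-derivable from the remaining relations of $C\cup\overline{J-R}$, since otherwise $j_f\lhd i_f$ would fail to enter $(C+\overline{J-R})^0$; this is where the exceptional hypothesis on $I$ and the noncrossing (nested-or-disjoint) structure of the descents in $\theta(I)$ are essential. On the other hand I must check that nothing \emph{more} is removed, as deleting a spurious increasing relation would produce an unwanted new decreasing relation in $(-)^0$. Both points are visually transparent on Figure \ref{cj0} but require the tedious case analysis of how the arcs of $R$ nest inside the subtree of the governing edge; once the single-descent case is settled, the induction together with noncrossingness guarantees that distinct descents of $R$ contribute the relations $j_f\lhd i_f$ independently, and that the interval-poset closure of their union is exactly $(C+\overline{J})^0+\{j_f\lhd i_f\ ;\ f\in R\}$.
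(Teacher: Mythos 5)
Your overall strategy matches the paper's: the inclusion $(C+\overline{J})^0\subseteq (C+\overline{J-R})^0$ is immediate from monotonicity of $(-)^0$, each $j_f\lhd i_f$ with $f\in R$ is shown to enter $(C+\overline{J-R})^0$ by a noncrossing argument on $\theta(I)$ (an obstruction $i_f\lhd s$ with $i_f<s\leqslant j_f$ surviving in $C+\overline{J-R}$ would force an edge $[e-s]$ with $e\leqslant i_f<s\leqslant j_f$ crossing $f$), and the reverse inclusion is handled by locating, for each new decreasing relation, the descent of $R$ responsible for it. The paper organises the last step as an induction on a minimal new relation $b\lhd x$ of $(C+\overline{J-R})^0\setminus(C+\overline{J})^0$ rather than your induction on $|R|$, but that difference is cosmetic.

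The genuine problem is the sub-claim you designate as the crux. First, the interval-poset closure of $\overline{f}=i_f\lhd j_f$ produces, via condition (1) of the definition, the relations $b\lhd j_f$ for $i_f<b<j_f$ --- relations \emph{ending} at $j_f$ --- and not the relations $i_f\lhd s$ \emph{starting} at $i_f$; so there is no reason that every $i_f\lhd s$ with $i_f<s\leqslant j_f$ lies in $C+\overline{J}$, hence no reason it can be ``removed''. Second, the generators of $\overline{J}$ interact by transitivity (e.g.\ a chain of decreasing covers gives $i_f\lhd j_f\lhd j_g$, hence $i_f\lhd j_g$ with $j_g>j_f$, which is lost when $f\in R$ and $g\notin R$), so the removed relations are not confined to the range $s\leqslant j_f$. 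Third, the conclusion that a new decreasing relation $j\lhd i$ must have $j=j_f$ is too strong and is inconsistent with the statement you are proving: the right-hand side carries an interval-poset closure precisely because the new relations include more than the generators $j_f\lhd i_f$ (for instance all $b\lhd i_f$ with $i_f<b<j_f$ forced by condition (2), and transitive composites with elements of $(C+\overline{J})^0$). The actual content of the hard inclusion --- that \emph{every} element of $(C+\overline{J-R})^0$ is generated under closure by $(C+\overline{J})^0$ and the $j_f\lhd i_f$ --- is exactly what you defer to ``tedious case analysis'', so as written the proposal does not yet contain the key step; repairing it essentially forces you back to the paper's minimal-element induction using the auxiliary edge $[a_1-b_1]$ of Figure \ref{cj0}.
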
 
 \begin{proof}

 Let $f\in R$. By hypothesis $f$ is a descent, so we are in the middle case of Figure \ref{descente-montee}. If we assume that $j_f\lhd i_f \notin (C+\overline{J-R})^0$, then there is $1\leq e \leq i_f$ and $i_f < s \leq j_f$ such that $[e-s]$ is an edge in $\theta(I)-R$. Looking at Figure \ref{descente-montee}, we see that such an edge contradicts the fact that $\theta(I)$ is a noncrossing tree. 
 
 Since it is clear that $(C+\overline{J})^0 \subseteq  (C+\overline{J-R})^0$ and since $(C+\overline{J})^0$ is an interval-poset, the right hand side is a subset of the left hand side. 
 
 Let $b\lhd x\in (C+\overline{J-R})^0-(C+\overline{J})^0$. We assume that $b$ is minimal such that $b\lhd x\notin (C+\overline{J})^0$. We start by stating the main ingredients of the proof which can be easily deduced from the graphics. 
 
 \begin{enumerate}
 \item The minimality of $b$ implies that there is an edge $f=[e-b]\in R$ with $1\leqslant e \leqslant x$.
 \item By hypothesis $f$ is a descent and we are in the leftmost case of Figure \ref{descente-montee} where $i_f = x$. We denote by $[a_1-b_1]$ the first edge that separates $f$ from the basis.
 \end{enumerate}
 
  It is now easy to see that $j_f\lhd i_f\in (C+\overline{J-R})^0-(C+\overline{J})^0$ and $b_1-1\lhd j_f \in (C+\overline{J})^0$ and $b_1-1 \lhd y \notin (C+\overline{J})^0$ for $i_f\leqslant y< j_f$. 
  
  If the edge $[a_1-b_1]$ is not in $R$, then $b_1\lhd i_f\notin (C+\overline{J-R})^0$. And all the relations of $(C+\overline{J-R})^0$ landing at $i_f$ are obtained by taking interval-posets condition and transitivity in the set $\{j_f\lhd i_f\}\sqcup (C+\overline{J})^0$.  Otherwise, $b_1$ is minimal for the property that $b_1 \lhd j_f \in (C+\overline{J-R})^0 - (C+\overline{J})^0$. By induction we see that any relation in $(C+\overline{J-R})^0$ is obtained from $\{j_f\lhd i_f\ ;\ f\in R\}$ and $(C+\overline{J})^0$ by taking the transitive closure and the interval-poset closure.


 \end{proof}

Let $\Delta = (C+\overline{J})^{0}$. Let $\Gamma$ be the smallest interval-poset containing $\{i_f\lhd j_f\ ;\ f\in J\}$.
\begin{lemma}
The relations of $\Delta$ are compatible with the relations of $\Gamma$.
\end{lemma}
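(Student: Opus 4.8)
The word \emph{compatible} I read as the assertion that the purely decreasing relations of $\Delta=(C+\overline{J})^{0}$ and the purely increasing relations of $\Gamma$ can be imposed at once on $\{1,\dots,n\}$ to yield a genuine interval-poset: their union must close up, under transitivity and the interval-poset conditions $(1)$ and $(2)$, to a partial order. Because $\Delta$ carries only decreasing relations (Lemma \ref{rel_simple}) while $\Gamma$, as the interval-poset closure of the increasing relations $\{i_f\lhd j_f\ ;\ f\in J\}$, carries only increasing ones, the only way to break antisymmetry is a pair $u<v$ with $u\lhd v\in\Gamma$ and $v\lhd u\in\Delta$. The plan is therefore to exclude every such conflicting pair, and then to confirm that the \emph{mixed} transitive consequences of a $\Gamma$-relation followed by a $\Delta$-relation never violate $(1)$ or $(2)$.

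First I would transport both families to the noncrossing tree $T=\theta(I)$. By Lemma \ref{char+delta}, a decreasing relation $v\lhd u\in\Delta$ can only occur when $u=j_g$ is the upper corner of a rise $g=[(u+1)-b]$ of $T$, with $v$ pinned to the arc $u+1\leqslant v\leqslant b_1-1$ cut off by the first edge $[a_1-b_1]$ separating $g$ from the base, as in Figure \ref{cj0}. On the other side, each generator $i_f\lhd j_f$ of $\Gamma$ comes from a descent $f\in J$, and by following how such a generator spreads under transitivity and under condition $(1)$ one checks that the larger endpoint of \emph{any} relation of $\Gamma$ is again some $j_f$ with $f$ a descent (condition $(2)$ never fires, since no decreasing relation is ever produced). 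Hence a conflicting pair would force a single vertex $u$ to be simultaneously the upper corner $j_g$ of a rise $g$ and a lower endpoint of a $\Gamma$-relation with top $j_f=v>u$, i.e.\ a vertex of the region delimited by the descent $f$.

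The decisive input is then planarity. Combining the two descriptions, $v\lhd u\in\Delta$ forces $v=j_f$ to sit strictly below the separating edge $[a_1-b_1]$ of the rise $g$, whereas $u\lhd v\in\Gamma$ forces the same $v=j_f$ to sit beyond $u$ on the far side of the descent $f$; reading these constraints off Figures \ref{descente-montee} and \ref{cj0}, I would show that they cannot coexist without two edges of $T$ crossing, or without contradicting the minimality of $[a_1-b_1]$. The generators are the base case: one needs $j_f\lhd i_f\notin\Delta$ for every $f\in J$, which is exactly the content of Lemma \ref{c+j-r} applied to the singleton $R=\{f\}$, since that lemma exhibits $j_f\lhd i_f$ as a relation by which $(C+\overline{J\setminus\{f\}})^{0}$ properly exceeds $\Delta$, so its reverse cannot already lie in $\Delta$.

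The step I expect to be the main obstacle is the bookkeeping around the closures rather than the generators. I must first secure the normal form for $\Gamma$ (top endpoint of the form $j_f$, lower endpoint between $i_f$ and $j_f$), and then rule out two subtler accidents at once: that a relation created inside $\Gamma$ by transitivity has its reverse enter $\Delta$ through a \emph{different} rise, and that a mixed composition of a $\Gamma$-relation with a $\Delta$-relation yields a relation offending the interval-poset conditions. Once the propagation of $\Gamma$ is matched vertex-by-vertex against the rise-description of $\Delta$, the single geometric incompatibility above finishes the proof, and the interval-poset with increasing part $\Gamma$ and decreasing part $\Delta$ becomes the natural candidate for (a shift of) $\Se(I)$.
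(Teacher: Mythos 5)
Your proposal is essentially the paper's own argument: both reduce ``compatible'' to the assertion that no increasing relation of $\Gamma$ (whose upper endpoints are the $j_f$ for descents $f\in J$) has its reverse in $\Delta$, both use the rise-characterization of $\Delta$ from Lemma \ref{char+delta}, and both close with a planarity/minimality contradiction read off Figures \ref{descente-montee} and \ref{cj0}. The paper's proof is exactly as terse as yours at the final geometric step (it just states the two cases $x\leqslant b$ and $b<x\leqslant j_f$ and the edge that forbids each), so your plan is faithful to it.
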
 
\begin{proof}
The elements of $\Gamma$ are obtained by the interval-poset condition and transitivity from the elements of $\{ i_f \lhd j_f\}$. A typical element is of the form $x\lhd j_f \lhd j_h$ for $f,h\in J$ and $i_f \leqslant x < j_f$. 

If $j_h\lhd x\in \Delta$, then by the interval-poset condition $j_f\lhd x \in \Delta$. If $x\leqslant b$, the edge $[a-b]$ contradicts this fact. If $b<x\leqslant j_f$, by definition of $j_f$ (see the leftmost case of Figure \ref{descente-montee}) there is a descent $[d_2-d_1]$ such that $d_1 < x \leqslant d_2$ which contradicts $b_1-1 \lhd x\in j_f$.  
\end{proof}
As a consequence, one can consider the interval-poset $I_s$ having $\Delta$ as set of decreasing relations and $\Gamma$ as set of increasing relations. Using the two bijections of Section \ref{bij_ip}, we have a better understanding of it. 
\begin{proposition}\label{prop_serre_obscure}
Let $I$ be an exceptional interval-poset of size $n$. Then $I_s = \psi^{-1}\big(\theta(I)\big)$.
\end{proposition}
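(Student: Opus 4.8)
The plan is to prove the equality of two interval-posets by matching their increasing and their decreasing relations separately. Recall that the full order $\lhd$ of an interval-poset is the disjoint union of its increasing relations and its decreasing relations, so an interval-poset is determined by these two sets; moreover the preceding compatibility lemma guarantees that $\Gamma$ (as increasing part) and $\Delta$ (as decreasing part) do assemble into a genuine interval-poset $I_s$. Hence it suffices to read off the increasing and the decreasing relations of $\psi^{-1}\big(\theta(I)\big)$ and check that they are $\Gamma$ and $\Delta$ respectively. The basic mechanism is that $\psi^{-1}\big(\theta(I)\big)$ is an exceptional interval-poset with $\psi\big(\psi^{-1}(\theta(I))\big)=\theta(I)$, so Lemma \ref{6.1} applies to it with the noncrossing tree $\theta(I)$ playing the role of $\psi(-)$.

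First I would handle the increasing part. By Lemma \ref{6.1}(1) the increasing cover relations of $\psi^{-1}\big(\theta(I)\big)$ are exactly the relations $i_f\lhd j_f$ as $f$ ranges over the \emph{descents} of $\theta(I)$. But a descent of $\theta(I)$ is precisely an edge arising from a decreasing cover relation of $I$, that is, an element of $J$. Therefore the increasing cover relations of $\psi^{-1}\big(\theta(I)\big)$ are the $i_f\lhd j_f$ for $f\in J$, and taking the interval-poset closure recovers $\Gamma$ directly from its definition as the smallest interval-poset containing $\{i_f\lhd j_f\ ;\ f\in J\}$.

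Next, the decreasing part. By Lemma \ref{6.1}(2) the decreasing cover relations of $\psi^{-1}\big(\theta(I)\big)$ are the $i_f\lhd j_f$ for $f$ a \emph{rise} of $\theta(I)$ with $j_f\neq n+1$, the rises with $j_f=n+1$ contributing only maximal elements by Lemma \ref{6.1}(3). To finish I would show that the interval-poset closure of these decreasing covers is exactly $\Delta=(C+\overline{J})^0$. This is where Lemma \ref{char+delta} does the work: for each rise $f=[(x+1)-b]$ with $j_f=x$, part (3) describes all elements of $(C+\overline{J})^0$ landing at $x$ as the $y\lhd x$ with $x+1\leqslant y\leqslant b_1-1$, while the corresponding cover $i_f\lhd j_f$ together with transitivity and the interval-poset condition produces precisely this range. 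Comparing the two descriptions rise by rise identifies the decreasing relations of $\psi^{-1}\big(\theta(I)\big)$ with $\Delta$, and then $\psi^{-1}\big(\theta(I)\big)=I_s$ follows at once.

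The main obstacle is this last reconciliation. Lemma \ref{6.1} only records \emph{cover} relations, whereas Lemma \ref{char+delta} characterizes the full closed set $(C+\overline{J})^0$, so the real task is to verify that the interval-poset closure of the rise-cover relations fills out exactly the interval $x+1\leqslant y\leqslant b_1-1$ predicted by Lemma \ref{char+delta} — each cover $i_f\lhd x$ must be extended from $i_f$ up to $b_1-1$ through the nested rises sitting above it, and one must check that nothing outside this range is produced. This is precisely the geometric bookkeeping encoded in Figures \ref{descente-montee} and \ref{cj0}, and I expect it to be the one genuinely delicate step; everything else is formal once Lemma \ref{6.1}, Lemma \ref{char+delta} and the compatibility lemma are in hand.
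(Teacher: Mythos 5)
Your proposal follows essentially the same route as the paper: both identify the cover relations of $\psi^{-1}\big(\theta(I)\big)$ via Lemma \ref{6.1}, match the increasing covers (descents of $\theta(I)$, i.e.\ the elements of $J$) with the generators of $\Gamma$, and then reconcile the closure of the rise-covers with $\Delta=(C+\overline{J})^0$ via Lemma \ref{char+delta}. The ``delicate step'' you defer --- checking that the covers $i_f\lhd j_f$ close up to exactly the ranges $x+1\leqslant y\leqslant b_1-1$ and nothing more --- is precisely what the paper's final two paragraphs carry out, by chaining through the successive rises from $b$ to $b_1$ in Figures \ref{descente-montee} and \ref{cj0}.
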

\begin{proof}
We denote the exceptional interval-poset $\psi^{-1}\big(\theta(I)\big)$ by $I'$. 

By Lemma \ref{6.1}, the increasing cover relations of $I'$ are of the form $i_f \lhd j_f$ for $f$ a descent of $\theta(I)$ which is nothing but a cover decreasing relation in $I$. The decreasing cover relations of $I'$ are of the form $i_f\lhd j_f$ for $f$ a rise of $\theta(I)$ such that $j_f\neq n+1$. This means that $f = a\lhd b$ is an increasing cover relation of $I$ such that $1$ is not smaller than $a$ for $\lhd$. Then, looking at the middle case of Figure \ref{descente-montee} we see that there is a list of rises from $j_f+1$ to $a$. Considering the first of these rises and Lemma \ref{char+delta}, we see that $(b_1-1) \lhd j_f \in (C+\overline{J})^0$. By interval-poset condition we have $i_f \lhd j_f \in (C+\overline{J})^0$. Since $I_s$ is an interval-poset, it is stable by transitivity and we have that all the relations of $I'$ are in $I$.

Conversely, we already saw that the relations $i_f\lhd j_f$ where $f$ runs through the cover decreasing relations of $I$ are in $I'$.

By Lemma \ref{char+delta}, the maximal relations of $(C+\overline{J})^0$ are of the form $(b_1-1) \lhd j_f$ for $f$ an increasing relation $a\lhd b$ such that $1$ is not smaller than $a$ for $\lhd$. Looking at Figure \ref{cj0}, we see that the edge $[a_1-b_1]$ is labeled by $j_f$. The edge labeled by $b_1-1$ is in the subtree delimited by the rise landing at $b_1$ in the succession of rises from $b$ to $b_1$. It is clear that $[a_1-b_1]$ is between this edge and the base. So in $I'$ we have $b_1-1\lhd j_f$. Since $I'$ is an interval-poset, it is stable under interval-poset condition. So $(C+\overline{J})^0\subset I$, and the result follows.   
\end{proof}

\begin{proposition}\label{serre_obscure}
Let $I$ be an exceptional interval-poset with set of decreasing relations in its Hasse diagram denoted by $J$. Let $n_{I} = |J|$. 

Then, in $D^b(\Tam_n)$ we have

\[ \Se(I)\cong \psi^{-1}\big(\theta(I)\big) [n_I].\]

\end{proposition}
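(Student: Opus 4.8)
The plan is to apply the Serre functor to the boolean projective resolution $\Po(I)$ of Lemma \ref{reso_proj} and to recognize the outcome as a shifted injective resolution of the interval-poset $I_s=\psi^{-1}\big(\theta(I)\big)$. Since $\Po(I)\cong I$ in $D^b(\Tam_n)$ and $\Se$ is a triangulated equivalence, we have $\Se(I)\cong \Se\big(\Po(I)\big)$, and as $\Po(I)$ is a complex of projectives the latter is computed termwise: following the recipe after Proposition \ref{basic_module}, each projective $P_{C+\overline{R}}$ is replaced by $\Se(P_{C+\overline{R}})$ and each embedding of projectives by the corresponding epimorphism of injectives. By Proposition \ref{serre_rong}, $\Se(P_{C+\overline{R}})\cong I_{(C+\overline{R})^0}$, the injective interval-poset whose decreasing relations are $(C+\overline{R})^0$. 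Thus $\Se\big(\Po(I)\big)$ is a complex of injectives with terms $I_{(C+\overline{R})^0}$ indexed over the subsets $R\subseteq J$, the term attached to $R$ sitting in homological degree $|R|$, and this complex represents $\Se(I)$ in $D^b(\Tam_n)$.

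The key step is to identify these terms by means of Lemma \ref{c+j-r}. Writing $\Delta=(C+\overline{J})^0$ and substituting $R\mapsto J\setminus R$ in that lemma gives
\[
(C+\overline{R})^0 = \Delta + \{\, j_f \lhd i_f \ ;\ f\in J\setminus R \,\}.
\]
Hence the term in homological degree $|R|$ is the injective interval-poset whose decreasing relations are $\Delta$ together with the reversed relations indexed by the complementary subset $J\setminus R$. Reindexing by $R'=J\setminus R$, the injective $I_{\Delta+\{j_f\lhd i_f\,;\,f\in R'\}}$ occurs in homological degree $|J|-|R'|=n_I-|R'|$.

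It remains to recognize this complex. Consider the interval-poset $I_s$ with decreasing relations $\Delta$ and increasing relations $\Gamma$; by construction its increasing cover relations are exactly $\{\, i_f\lhd j_f \ ;\ f\in J \,\}$, a set of cardinality $n_I$. Applying the injective analogue of Lemma \ref{reso_proj} (the Remark following it), one obtains an injective resolution $\mathcal{I}(I_s)$ of $I_s$ whose term indexed by $R'\subseteq\{i_f\lhd j_f\}$ is $I_{\Delta+\{j_f\lhd i_f\,;\,f\in R'\}}$, placed in homological degree $-|R'|$. Shifting by $[n_I]$ moves that term to homological degree $n_I-|R'|$, which matches the complex above termwise. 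Since both are complexes of injectives with the same terms in the same degrees and, by Proposition \ref{dico}(4), at most one-dimensional morphism spaces between comparable terms, the differentials agree up to sign; the signs are fixed by the orientation of the simplex, and the complementation $R\mapsto J\setminus R$ merely flips it. We conclude $\Se\big(\Po(I)\big)\cong \mathcal{I}(I_s)[n_I]$, hence $\Se(I)\cong I_s[n_I]$, and Proposition \ref{prop_serre_obscure} identifies $I_s=\psi^{-1}\big(\theta(I)\big)$, giving the claim.

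The main obstacle is indexing bookkeeping rather than a new idea. One must verify that $\{i_f\lhd j_f\,;\,f\in J\}$ are precisely the $n_I$ increasing covers of $I_s$, so that the dual resolution has the correct simplicial shape; track the complementation $R\mapsto J\setminus R$, which both flips the simplex and accounts for the shift by $n_I=|J|$; and check that the signed epimorphisms appearing in $\Se\big(\Po(I)\big)$ coincide with the coface maps of $\mathcal{I}(I_s)$. Lemma \ref{c+j-r} already carries out the essential combinatorial identification of the terms, so these remaining verifications are routine but must be performed with care.
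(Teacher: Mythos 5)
Your proposal is correct and follows essentially the same route as the paper: apply $\Se$ termwise to the boolean projective resolution of Lemma \ref{reso_proj} using Proposition \ref{serre_rong}, identify the resulting injective terms via Lemma \ref{c+j-r}, and match the complex (after the complementation $R\mapsto J\setminus R$, which accounts for the shift $[n_I]$) with the dual injective resolution of $I_s=\psi^{-1}\big(\theta(I)\big)$. Your explicit bookkeeping of degrees and your remark on the one-dimensional Hom spaces fixing the differentials up to sign only make explicit what the paper leaves implicit.
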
 
\begin{proof}
By Lemma \ref{6.1} the increasing cover relations of $I_s = \psi^{-1}\big(\theta(I)\big)$ are of the form $i_f \lhd j_f$ for $f$ a descent in $\theta(I)$. In other terms, they are of the form $i_f\lhd j_f$ for $f\in J$. 

Using a dual version of Lemma \ref{reso_proj}, we see that the following complex is an injective resolution of $I_s$: 

\[
{\small
\mathcal{I} = I_{\Delta} \to \bigoplus_{f\in J} I_{\Delta+\{j_f\lhd i_f\}} \to \bigoplus_{\substack{R\subset J \\ |R|=2}} I_{\Delta + \{j_f \lhd i_f\ ;\ f\in R\}}\to \cdots \to \bigoplus_{\substack{R\subset J \\ |R|=|J|-1}} I_{\Delta + \{j_f \lhd i_f\ ;\ f\in R\}}\to I_{\Delta + \{j_f \lhd i_f\ ;\ f\in J\}}.}
\]

On the other hand, by Lemma \ref{reso_proj}, $I$ is isomorphic to $\mathcal{P}(I)$ in $D^b(\Tam_n)$. By Proposition \ref{serre_rong}, the Nakayama functor sends the projective interval-poset $P_C$ to the injective interval-poset $I_{C^0}$. So, by Lemma \ref{c+j-r}, we see that $\Se(I)$ is isomorphic to $\mathcal{I}[n_I]$.
\end{proof}

\section{Duality of noncrossing trees}\label{duality}

Let $T$ be a noncrossing tree in $\mathcal{C}_n$. A noncrossing tree gives a partition of the disk $\mathcal{C}_n$ into $n+1$ areas, that we call the \emph{cells} of the noncrossing tree. Since there is no loop made of edges, each cell has exactly one open side. We label each cell by the leftmost vertex of the open side. The cell containing the vertices $1$ and $n+1$ is called the base cell and is labeled by $n+1$.  

We define the tree $T^{*}$ as the planar dual of $T$. That is, there is an edge $[i,j]$ in $T^{*}$ if and only if the cells in $T$ labeled by $i$ and $j$ are adjacent. 

For $n\in \mathbb{N}$, the rotation by an angle of $\frac{2\pi}{n+1}$ induces a permutation of the noncrossing tree of size $n$. For simplicity, we denote it by $\vartheta$. 

\begin{lemma}\label{double_dual}
Let $T$ be a noncrossing tree. 
\begin{enumerate}
\item The duality commutes with the rotation. That is $\vartheta(T)^* = \vartheta(T^*)$. 
\item The square of the duality is the rotation by an angle of $\frac{2\pi}{n+1}$. That is $(T^*)^* =  \vartheta(T)$.
\end{enumerate}
\end{lemma}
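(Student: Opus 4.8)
The plan is to treat the two statements separately, deriving part (1) from the equivariance of the whole construction under rotation, and part (2) from a ``half-step'' analysis of what duality does to the boundary positions.

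For part (1) the key point is that the cell-labelling is equivariant under $\vartheta$. The rotation $\vartheta$ is an isometry of the disc that cyclically permutes the marked points, so it carries $T$ to the noncrossing tree $\vartheta(T)$ and sends cells of $T$ to cells of $\vartheta(T)$ while preserving adjacency (a purely topological notion). Since $\vartheta$ preserves the clockwise orientation, it sends the open side of a cell to the open side of its image and the leftmost endpoint to the leftmost endpoint; after checking that the base cell (labelled $n+1$) is handled correctly by this cyclic convention, I get that the cell of $T$ labelled $i$ is sent to the cell of $\vartheta(T)$ labelled $\sigma(i)$, where $\sigma$ is the cyclic shift of labels induced by $\vartheta$. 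Then I would simply chase the definition: $[i-j]\in T^*$ iff the cells $i,j$ are adjacent in $T$, iff the cells $\sigma(i),\sigma(j)$ are adjacent in $\vartheta(T)$, iff $[\sigma(i)-\sigma(j)]\in \vartheta(T)^*$; on the other hand $[\sigma(i)-\sigma(j)]\in\vartheta(T^*)$ iff $[i-j]\in T^*$. Comparing the two equivalences gives $\vartheta(T^*)=\vartheta(T)^*$.

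For part (2) I would first record the following explicit description of duality, read directly off Figure \ref{descente-montee} (and already implicit in Lemma \ref{6.1}): an edge $f=[a-b]$ of $T$ borders exactly the two cells labelled $i_f$ and $j_f$, so that $T^*$ has edge set $\{[i_f-j_f] : f\in E(T)\}$ and the assignment $f\mapsto[i_f-j_f]$ is the standard edge bijection of planar duality. The conceptual reason that $(T^*)^*$ is a rotation is a half-step shift. If one draws the dual by placing its vertices at the midpoints of the open arcs (the natural position of the cells) rather than at the boundary points, then dualising twice in this ``midpoint'' model returns $T$ unchanged, because planar duality of a boundary-anchored tree is an involution once the positions of vertices and faces are tracked. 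The paper's convention instead moves each cell from the midpoint of its open arc back to a boundary vertex, i.e.\ it composes the midpoint-dual with a half-step rotation of the circle; performing this half-step twice produces exactly one full step, which is $\vartheta$. I would make this precise by computing $i_g,j_g$ for each edge $g=[i_f-j_f]$ of $T^*$ in terms of the data of $f=[a-b]$, using the descent/rise dichotomy of Figure \ref{descente-montee}, and checking that $[i_g-j_g]$ is the one-step rotation of $[a-b]$; matching this with the edge bijection for $\vartheta$ then yields $(T^*)^*=\vartheta(T)$.

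The main obstacle is the label bookkeeping in part (2): one must keep careful track of the special base cell (labelled $n+1$) and of the direction of the half-step, since rotating the ``wrong way'' would give $\vartheta^{-1}$ instead of $\vartheta$. Concretely, the case $j_f=n+1$ (the rise reaching the base, third picture of Figure \ref{descente-montee}) behaves differently from the generic case and has to be inspected separately. I would pin down the orientation once and for all by evaluating both sides on a single small tree, such as the star at the root in $\mathcal{C}_2$, which fixes the direction of $\vartheta$ and shows that the one-step shift is $i\mapsto i-1$ for the paper's conventions; the general verification is then a finite check over the local configurations in Figure \ref{descente-montee}.
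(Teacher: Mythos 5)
Your proposal is correct and follows essentially the same route as the paper: the paper's own proof is a one-line appeal to the picture obtained by drawing the dual with its vertices interleaved between those of $T$ (your ``midpoint'' model), from which both the $\vartheta$-equivariance and the half-step-squared-equals-one-step phenomenon are read off. Your version merely makes explicit the bookkeeping (the edge bijection $f\mapsto[i_f-j_f]$, the base cell, and the orientation check on a small example) that the paper leaves as graphical evidence.
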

\begin{proof}
Both of the statements are graphical evidences when one draws the planar dual of $T$ on the same circle by adding new vertices in another color between the vertices of $T$ and draws an arrow between them if they are separated by a unique edge of $T$. 
\end{proof}

\begin{proposition}\label{psironddual}
Let $I$ be an exceptional interval poset, then
\[ 
\psi(I)^* = \theta(I).
\]
\end{proposition}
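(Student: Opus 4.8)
The plan is to show that $\theta(I)$ and $\psi(I)^{*}$ coincide with one and the same explicitly described set of edges, namely
$\{[i_f - j_f]\ ;\ f \text{ an edge of } \psi(I)\}$, where $i_f$ and $j_f$ are the vertices attached to the edge $f$ as in Section \ref{se_fun}. I would first treat $\theta(I)$. By definition its edges are exactly the edges $[i-j]$ attached to the cover relations $i \lhd j$ of $\widehat I = I \sqcup \{n+1\}$, and these cover relations are of three kinds: the increasing covers of $I$, the decreasing covers of $I$, and the relations $x \lhd n+1$ with $x$ maximal in $I$. Lemma \ref{6.1} matches each kind with an edge of $\psi(I)$: increasing covers with the descents, decreasing covers with the rises $f$ such that $j_f \neq n+1$, and maximal elements with the rises $f$ such that $j_f = n+1$; in every case the corresponding relation is precisely $i_f \lhd j_f$. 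Since every edge of $\psi(I)$ is either a descent or a rise, this is a bijection between the edges of $\psi(I)$ and the cover relations of $\widehat I$, under which $\theta$ sends $i_f \lhd j_f$ to $[i_f - j_f]$. Hence $\theta(I) = \{[i_f - j_f]\ ;\ f \in \psi(I)\}$.

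It then remains to prove the same description for $\psi(I)^{*}$, which I would isolate as a statement about an \emph{arbitrary} noncrossing tree $T$: for every edge $f = [a-b]$ of $T$, the two cells of $T$ sharing $f$ are labeled $i_f$ and $j_f$. The chord $[a-b]$ splits the disk into the region meeting the arc from $a$ to $b$ and the region meeting the arc from $b$ to $a$. By definition $i_f$ is the largest vertex of $[a,b)$ still joined to $a$ once $f$ is removed, and the boundary of the cell adjacent to $f$ on the $a$-side runs along the edges of that component up to $i_f$ and then along the open side $[i_f - (i_f+1)]$, so this cell is labeled $i_f$; the symmetric analysis on the $b$-side shows the other cell has open side $[j_f - (j_f+1)]$ and label $j_f$, with the convention that it is the base cell (labeled $n+1$) exactly when $j_f = n+1$. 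This is the local picture already recorded in Figure \ref{descente-montee}. Since in the planar dual each edge $f$ of $T$ contributes exactly the edge joining the two cells it borders, we get $T^{*} = \{[i_f - j_f]\ ;\ f \in T\}$; taking $T = \psi(I)$ and combining with the previous paragraph yields $\psi(I)^{*} = \theta(I)$.

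The main obstacle is the cell-labeling lemma of the second paragraph: one must verify carefully that the combinatorial vertex $i_f$, defined through connectivity in $T - \{f\}$, really is the leftmost vertex of the open side of the cell bordering $f$, and in particular handle the degenerate cases where $f$ is itself a side of $T$ or where $j_f = n+1$ forces the adjacent cell to be the base cell. I expect this to come down to the noncrossing condition exactly as displayed in Figure \ref{descente-montee}, but it is the step where the graphical evidence must be turned into a genuine argument. Once this local statement is secured, the proposition follows formally; as a byproduct, the identity $\psi(I)^{*} = \theta(I)$ also shows that $\theta(I)$ is a noncrossing tree and that $\theta$ is a bijection, which is the part of Theorem \ref{big_theta} left to the reader.
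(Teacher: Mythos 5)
Your proof is correct and follows essentially the same route as the paper: both rest on Lemma \ref{6.1} to match the cover relations of $\widehat{I}$ (increasing covers, decreasing covers, maximal elements) with the descents and rises of $\psi(I)$, and on the cell-labeling picture of Figure \ref{descente-montee} to see that the edge $f$ separates the cells labeled $i_f$ and $j_f$. The only, harmless, difference is the final bookkeeping: the paper proves one containment and concludes by counting ($n$ edges on each side), whereas you identify both $\theta(I)$ and $\psi(I)^*$ with the explicit common edge set $\{[i_f-j_f]\ ;\ f\in\psi(I)\}$.
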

\begin{proof}
This is a consequence of Lemma \ref{6.1}: 

If $x \lhd y$ is a decreasing cover relation in $I$, then by Lemma \ref{6.1} there is a rise $f$ in $\psi(I)$ such that $y=j_f$ and $i_f=x$. Looking at the left case of Figure \ref{descente-montee}, we see that it separates a cell labeled by $i_f$ and a cell labeled by $j_f$. So, the edge $[x-y]$ is in $\psi(I)^{*}$.

If $x\lhd y$ is an increasing cover relation in $I$, then by Lemma \ref{6.1} there is a descent $f$ in $\psi(I)$ such that $x=i_f$ and $y=j_f$. Looking at the middle case of Figure \ref{descente-montee} we see that the edge $[x-y]$ is in $\psi(I)^{*}$. 

Similarly, if $i$ is a maximal point of $I$, it labels a rise $f$ such that $j_f=n+1$. Looking at the right case of Figure \ref{descente-montee}, we see that the edge $[i-(n+1)]$ is in $\psi(I)^{*}$.

In other words every edge of $\theta(I)$, which is not assumed to be a noncrossing tree for this proof, is in $\psi(I)^{*}$.

All the noncrossing trees in $\mathcal{C}_n$ have exactly $n$ edges. So, the tree $\psi(I)^*$ has as many edges as the tree $\psi(I)$. By Lemma \ref{6.1}, the set of edges of $\psi(I)$ is in bijection with the set consisting of the cover relations of $I$ and its maximal elements. In other terms, the tree $\psi(I)^*$ has as many edges as $\theta(I)$. In conclusion, we have $\theta(I) = \psi(I)^*$.

%
%
%
%

\end{proof}
\section{The bounded derived category of the Tamari lattices are fractionaly Calabi-Yau}\label{main_result}
Combining the results of Sections \ref{se_fun} and \ref{duality} we have the following result. 

\begin{theorem}\label{last}
Let $T$ be a noncrossing tree of size $n$. Then, there is an integer $n_T$ such that 
\[\Se(\psi^{-1}(T)) \cong \psi^{-1}(T^*)[n_T] \]
in $D^b(\Tam_n)$.
\end{theorem}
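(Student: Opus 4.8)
The plan is to treat this statement as a direct corollary of the two main propositions established in the preceding sections, namely Proposition \ref{serre_obscure} from Section \ref{se_fun} and Proposition \ref{psironddual} from Section \ref{duality}. No further combinatorial analysis of noncrossing trees should be required; the whole argument consists in translating the given noncrossing tree into an exceptional interval-poset via the bijection $\psi$ and then chaining the two identities together.

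First I would set $I = \psi^{-1}(T)$. By Theorem \ref{image_noncrossing} the map $\psi$ is a bijection between exceptional interval-posets of size $n$ and noncrossing trees of size $n$, so $I$ is a well-defined exceptional interval-poset with $\psi(I) = T$, and $\psi^{-1}(T)$ is exactly the indecomposable object appearing on the left of the claim. Applying Proposition \ref{serre_obscure} to $I$, and writing $J$ for the set of decreasing relations in the Hasse diagram of $I$ with $n_I = |J|$, one obtains $\Se(I) \cong \psi^{-1}\big(\theta(I)\big)[n_I]$ in $D^b(\Tam_n)$. It then remains only to identify the tree $\theta(I)$, and here Proposition \ref{psironddual} supplies precisely what is needed: $\theta(I) = \psi(I)^{*} = T^{*}$. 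Substituting this back yields $\Se(\psi^{-1}(T)) \cong \psi^{-1}(T^{*})[n_I]$, and setting $n_T = n_I$ completes the argument.

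I do not expect a genuine obstacle at this stage: the difficult work has already been carried out in the two propositions invoked, the first computing the Serre functor on exceptional interval-posets through the boolean projective resolutions of Lemma \ref{reso_proj}, and the second identifying the combinatorial operation $\theta$ with the planar dual of $\psi$. The only point deserving a moment of care is the bookkeeping of the shift. I would record explicitly that $n_T$ can be taken to be $|J|$, the number of decreasing cover relations of $\psi^{-1}(T)$; by Lemma \ref{6.1} this equals the number of rises $f$ of $T$ with $j_f \neq n+1$. This refined description is not needed for the bare existence statement as formulated here, but it is exactly the quantity that will later feed into the combinatorial interpretation of $n_T$ in terms of the edges of $T^{*}$.
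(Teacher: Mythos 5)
Your proof is correct and follows exactly the paper's own argument: set $I=\psi^{-1}(T)$, apply Proposition \ref{serre_obscure} to get $\Se(I)\cong\psi^{-1}(\theta(I))[n_I]$, and then use Proposition \ref{psironddual} to identify $\theta(I)=\psi(I)^{*}=T^{*}$. The extra bookkeeping you record for $n_T$ (that it equals $|J|$, the number of decreasing cover relations of $\psi^{-1}(T)$, i.e.\ the number of rises $f$ of $T$ with $j_f\neq n+1$) is accurate and is indeed what the paper later uses in Proposition \ref{shifts}.
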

\begin{proof}
By Proposition \ref{prop_serre_obscure} we have that $\Se(\psi^{-1} T) \cong \psi^{-1} (\theta \psi^{-1}T) [n_T]$ for an integer $n_T$. By Proposition \ref{psironddual}, we have $\theta\circ \psi^{-1}T = T^*$. 
\end{proof}
In other terms, the Serre functor acts - up to a shift - as the planar duality of the noncrossing trees. 

It remains to understand the shifts that appear here.
\begin{proposition}\label{shifts}
Let $I$ be an exceptional interval-poset of size $n$. Then,
\[ 
\Se^{2n+2}(I)\cong I[n(n-1)]
\]
in $D^b(\Tam_n)$.
\end{proposition}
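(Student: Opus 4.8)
The plan is to iterate the description of the Serre functor from Theorem~\ref{last} and keep careful track of two things simultaneously: how the underlying noncrossing tree moves under the planar duality, and how the shifts $n_T$ accumulate. By Theorem~\ref{last}, if $I = \psi^{-1}(T)$ then $\Se(I) \cong \psi^{-1}(T^*)[n_T]$, so applying $\Se$ repeatedly moves us through the sequence of trees $T, T^*, (T^*)^*, \ldots$ obtained by iterating the planar duality. By Lemma~\ref{double_dual}, the square of the duality is the rotation $\vartheta$ by $\frac{2\pi}{n+1}$, and $\vartheta$ has order $n+1$. Hence $\vartheta^{n+1}$ is the identity, so after $2(n+1) = 2n+2$ applications of $\Se$ the underlying noncrossing tree returns exactly to $T$. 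This already shows $\Se^{2n+2}(I)$ is isomorphic to a shift of $I$; what remains is to compute the total shift.

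First I would set up the telescoping of shifts. Writing $T_0 = T$, $T_1 = T^*$, $T_2 = \vartheta(T)$, and in general $T_{2k} = \vartheta^k(T)$ and $T_{2k+1} = \vartheta^k(T^*)$, repeated application of Theorem~\ref{last} gives
\[
\Se^{2n+2}(I) \cong \psi^{-1}(T_{2n+2})\Big[\sum_{i=0}^{2n+1} n_{T_i}\Big] = I\Big[\sum_{i=0}^{2n+1} n_{T_i}\Big],
\]
using $T_{2n+2} = \vartheta^{n+1}(T) = T$. So the claim $\Se^{2n+2}(I) \cong I[n(n-1)]$ reduces to the purely combinatorial identity $\sum_{i=0}^{2n+1} n_{T_i} = n(n-1)$, where $n_{T_i}$ is the integer appearing in the statement, which by Proposition~\ref{serre_obscure} equals the number of decreasing cover relations in the Hasse diagram of the exceptional interval-poset $\psi^{-1}(T_i)$, equivalently (via Lemma~\ref{6.1}) the number of descents of $\theta\psi^{-1}(T_i) = T_i^{\,*}$.

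The main step is therefore to evaluate this sum over the full orbit of the duality/rotation. The key idea is that summing over $i$ pairs each tree with its dual, and summing $n_{T_i}$ over a complete orbit counts edges of the noncrossing trees according to whether they are descents. Concretely, as $i$ ranges over a single period of length $2n+2$, the trees $T_i$ run through each rotate $\vartheta^k(T)$ and each rotate $\vartheta^k(T^*)$ exactly once. Because $n_{T_i}$ counts the descents of the dual tree $T_i^*$, and because the rotation $\vartheta$ cyclically permutes the $n+1$ vertices, summing the descent count over all $n+1$ rotations of a fixed noncrossing tree should count, for each of its $n$ edges, the number of cyclic positions at which that edge registers as a descent. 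I would show that each edge contributes a fixed amount independent of the tree, so that the total over the orbit collapses to a closed-form count depending only on $n$.

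The hard part will be this last edge-counting argument: making precise how a fixed edge of a noncrossing tree alternates between being a descent and a rise as the tree is rotated, and checking that the contributions sum to exactly $n(n-1)$ rather than some other quadratic expression. I expect the cleanest route is to fix the tree $T$ and compute $\sum_{k=0}^{n} \big(\text{descents of } \vartheta^k(T)\big) + \sum_{k=0}^{n}\big(\text{descents of } \vartheta^k(T^*)\big)$ by summing over edges and using that an edge $[a-b]$ of a rotate is a descent precisely when the vertex $n+1$ (the root toward which edges are oriented) lies in a determined arc; as $k$ varies this arc condition is met for a number of rotations governed by the arc length, and these lengths sum in a controlled way over the $n$ edges. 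I would verify the resulting count equals $n(n-1)$ on a small case such as $n=2$ or $n=3$ to fix the normalization before committing to the general identity.
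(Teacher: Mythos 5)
Your reduction is exactly the paper's: iterating Theorem~\ref{last}, using that the square of the planar duality is the rotation $\vartheta$ of order $n+1$, and telescoping the shifts reduces everything to the combinatorial identity $\sum_{i=0}^{2n+1} n_{T_i} = n(n-1)$, where the $T_i$ run over the rotation orbits of $T$ and of $T^*$ and $n_{T_i}$ counts the descents of $T_i^{\,*}$. Up to that point the argument is correct.

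The gap is in your plan for evaluating the sum. You propose to compute $\sum_{k=0}^{n}\mathrm{desc}\big(\vartheta^k(T)\big)$ and $\sum_{k=0}^{n}\mathrm{desc}\big(\vartheta^k(T^*)\big)$ separately by showing that ``each edge contributes a fixed amount independent of the tree.'' This is false, and so is the weaker statement that the descent count summed over a single rotation orbit depends only on $n$. Take $n=3$ and $T=\{[1-2],[1-3],[3-4]\}$ (the path $2-1-3-4$): each of the four rotations of $T$ has exactly one descent, so its orbit sum is $4$, while the dual $T^*=\{[1-2],[2-4],[3-4]\}$ has rotations with $0,1,0,1$ descents, orbit sum $2$. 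The two orbit sums are $4$ and $2$, not $3$ and $3$; only their total equals $n(n-1)=6$. Indeed, over the $n+1$ rotations an edge $[u-v]$ is a descent a number of times equal to $d_u+d_v$, where $d_u$ counts the vertices of the component of $u$ in $T\setminus\{[u-v]\}$ lying clockwise strictly before $u$ within that arc (similarly for $d_v$); this depends on the tree, not just on $n$. So no per-edge or per-orbit count of descents alone can close the argument: the duality must enter the evaluation of the shift itself. The paper's way out is to group the terms in consecutive pairs $n_T+n_{T^*}$ and to observe that both numbers are statistics of the \emph{same} tree $T^*$: $n_T$ is its number of descents while $n_{T^*}$ is its number of rises $f$ with $j_f\neq n+1$, so $n_T+n_{T^*}=n-F_{T^*}$, where $F_{T^*}$ counts the rises with $j_f=n+1$ (the edges not separated from the base). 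Summing over the rotation orbit gives $n(n+1)-\sum_{k}F_{\vartheta^k T^*}$, and each edge satisfies $j_f=n+1$ for exactly two of the $n+1$ rotations, whence $n(n+1)-2n=n(n-1)$. You would need to supply this pairing (or an equivalent genuine use of the duality) to make your computation go through.
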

\begin{proof}
We denote by $T$ the noncrossing tree $\psi(I)$. By Theorem \ref{last}, the interval-poset obtained after $2n+2$ applications of the Serre functor corresponds via $\psi$ to the tree obtained by successively taking $2n+2$ times the planar duality of $T$. By Lemma \ref{double_dual}, this tree is nothing but $T$. 

We denote by $m_T = n_T + n_{T^*}$. This is the shift obtained after two applications of the Serre functor on $I$. The shift obtained after $2n+2$ applications of the Serre functor is $\sum_{i=0}^{n}m_{\vartheta^{i}T}$. 

By Proposition \ref{serre_obscure}, the number $n_T$ is the number of decreasing relations of $I$ which is also the number of descents of $\theta I = T^{*}$. By Lemma \ref{6.1} the number $n_{T^*}$ is the number of rises $f$ of $T^{*}$ such that $j_f \neq n+1$. Let us call \emph{forbidden} a rise $f$ such that $j_f = n+1$. Then, the number $m_T$ is the number of edges of $T^{*}$ minus the number of forbidden edges. If we denote the number of forbidden edges by $F_{T^*}$, we have
\[ \sum_{i=0}^{n}m_{\vartheta^{i}T} = n(n+1) - \sum_{i=0}^{n}F_{\vartheta^i T^*}. \]

Let $f$ be an edge in $T$. Then it becomes a forbidden edge exactly twice during all the rotations of $T$. This is when $i_f$ or $j_f$ is rotated to $n+1$.

So, we have
\[\sum_{i=0}^{n}m_{\vartheta^{i}T} = n(n+1) -2n = n(n-1). \]
\end{proof}
\begin{theorem}
Let $n\in \mathbb{N}$. The bounded derived category $D^b(\Tam_n)$ is $\frac{n(n-1)}{2n+2}$ fractionally Calabi-Yau.
\end{theorem}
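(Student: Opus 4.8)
The plan is simply to assemble the two principal results proved upstream: the reduction theorem of Section \ref{poset_min} (Theorem \ref{reduction}) and the explicit period computation of Proposition \ref{shifts}. The first observation I would make is that $\Tam_n$ is a genuine lattice, so in particular it has a unique minimal element (and a unique maximal element). This is exactly the hypothesis needed to invoke Theorem \ref{reduction}, whose content is that for such a poset the fractional Calabi-Yau relation need only be verified on the projective indecomposable modules, after which it propagates — via the inner-automorphism argument of Lemma \ref{stanley} — to an isomorphism of functors on all of $D^b(\Tam_n)$.

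Next I would recall that the projective indecomposable $A_\kk(\Tam_n)$-modules all belong to the family of exceptional interval-posets; this is the Corollary immediately following Proposition \ref{dico}. Consequently Proposition \ref{shifts}, which is stated for arbitrary exceptional interval-posets, applies verbatim to each projective indecomposable $P$ and yields $\Se^{2n+2}(P)\cong P[n(n-1)]$. Thus the hypothesis of Theorem \ref{reduction} is met with shift $n(n-1)$ and period $2n+2$.

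Feeding these data into Theorem \ref{reduction} produces an isomorphism of functors $\Se^{2n+2}\cong [n(n-1)]$, i.e.\ $D^b(\Tam_n)$ is $\big(n(n-1),2n+2\big)$-fractionally Calabi-Yau in the sense of Definition \ref{def_CY}; dividing the shift by the period gives the Calabi-Yau dimension $\frac{n(n-1)}{2n+2}$, exactly as predicted by Chapoton's Conjecture \ref{conj_A}. At this final stage there is no substantial obstacle — the real difficulty was the combinatorial control of the Serre functor through the bijections $\psi$, $\theta$ and the planar duality of Section \ref{duality}, culminating in Proposition \ref{shifts}. The only point deserving care is the bookkeeping of the two integers: one must insert $n(n-1)$ as the shift and $2n+2$ as the exponent in the correct roles of Theorem \ref{reduction}, since the symbol $n$ is overloaded between the size of the lattice $\Tam_n$ and the abstract exponent appearing in the statement of that theorem and in Definition \ref{def_CY}.
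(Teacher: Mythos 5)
Your proposal is correct and follows exactly the same route as the paper, which likewise combines Theorem \ref{reduction}, Proposition \ref{shifts}, and the fact that the projective indecomposables are exceptional interval-posets. The extra remarks you add (that $\Tam_n$ being a lattice supplies the unique minimal element, and the caution about the overloaded symbol $n$) are sensible but not points where the paper's argument differs.
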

\begin{proof}
The result follows from Proposition \ref{shifts} and Theorem \ref{reduction} and the fact that the projective indecomposable modules are in the family of exceptional interval-posets.
\end{proof}
\begin{remark}
Since the square of the Serre functor acts as the rotation of noncrossing trees, we see that $\frac{n(n-1)}{2n+2}$ is the `effective' dimension of the derived category in the sense that it cannot be simplified when $n
\geqslant 3$. The fraction $\frac{n(n-1)}{2n+2}$ is larger than $1$ when $n\geqslant 4$, this shows that the incidence algebra of the Tamari lattices are not piecewise hereditary. Similarly to the examples of Section $7$ of \cite{lenzing} the Tamari lattices are wild when their Calabi-Yau dimension is larger than $1$. 
\end{remark}
For the Coxeter matrix of the Tamari lattices, we recover Chapton's Theorem (\cite{chapoton_coxeter_tamari}) and we deduce some new properties.
\begin{proposition}\label{cox_mat_new}
Let $n\in \mathbb{N}$. Let $C = -I\cdot (I^{-1})^t$ be the Coxeter matrix of $\Tam_n$.
\begin{enumerate}
\item $C^{2n+2} = \operatorname{Id}$.
\item The elements of the matrix $C$ are $0,1$ and $-1$.
\item The nonzero elements of a column have the same sign. 
\end{enumerate} 
\end{proposition}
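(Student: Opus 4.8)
The plan is to push everything to the Grothendieck group $K_0(D^b(\Tam_n))$ and exploit that, after Section \ref{se_fun}, we understand the Serre functor combinatorially on the exceptional interval-posets. Recall that on $K_0$ the shift $[1]$ acts as $-\operatorname{Id}$, that the classes of the projective indecomposables form a basis, and that the induced endomorphism $\Se_*$ of $K_0$ equals $\pm C$ with $C = -I(I^{-1})^t$ (this is the ``up to a sign'' relation recalled in Section \ref{representation}; it is pinned down by $\Se(P_x) \cong I_x$ together with $[P_x] = \sum_{z \geqslant x}[S_z]$ and $[I_x] = \sum_{z \leqslant x}[S_z]$).

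For the first item I would simply specialize Proposition \ref{shifts}. Since the projective indecomposables are exceptional interval-posets, Proposition \ref{shifts} yields $\Se^{2n+2}(P_x) \cong P_x[n(n-1)]$ for every $x$, and because these classes generate $K_0$ we obtain the identity of linear maps $\Se_*^{2n+2} = (-1)^{n(n-1)}\operatorname{Id}$. On the one hand $\Se_*^{2n+2} = (\pm C)^{2n+2} = C^{2n+2}$; on the other hand $n(n-1)$ is a product of two consecutive integers, hence even, so $(-1)^{n(n-1)} = 1$. Therefore $C^{2n+2} = \operatorname{Id}$.

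The second and third items follow from reading the columns of $C$ in the basis of simple modules. The $x$-th column of $C$ is, up to an overall sign, the class $[\Se(S_x)]$. Now the simple modules are exceptional interval-posets (the Corollary following Proposition \ref{dico}), so Proposition \ref{serre_obscure} gives $\Se(S_x) \cong \psi^{-1}(\theta(S_x))[n_{S_x}]$, whence $[\Se(S_x)] = (-1)^{n_{S_x}}\,[\psi^{-1}(\theta(S_x))]$. The object $\psi^{-1}(\theta(S_x))$ is again an exceptional interval-poset, i.e. an interval module $M_{[S',T']}$; such a module is multiplicity-free, with $[M_{[S',T']}] = \sum_{t \in [S',T']}[S_t]$, a $0/1$ indicator vector in the simple basis. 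Consequently each column of $C$ equals $\pm 1$ times a $0/1$ vector: its entries lie in $\{0,1,-1\}$ (item 2) and all of its nonzero entries share the common sign $\pm(-1)^{n_{S_x}}$ (item 3).

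The argument is essentially bookkeeping once this dictionary is in place, and the only real care is with signs. The single conceptual point driving items (2) and (3) is to work in the \emph{simple} basis, so that the columns of the Coxeter matrix become Grothendieck classes of $\Se(S_x)$, together with the observation that for the Tamari lattice $\Se$ sends a simple to a shift of an honest interval module, which is \emph{thin} and hence has a signed indicator vector as its class. The parity fact $2 \mid n(n-1)$ is precisely what makes the shift disappear in item (1).
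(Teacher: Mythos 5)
Your proposal is correct and follows essentially the same route as the paper: item (1) comes from the fractional Calabi--Yau property $\Se^{2n+2}\cong[n(n-1)]$ (with the even parities of $n(n-1)$ and $2n+2$ killing the signs), and items (2) and (3) come from reading the columns of $C$ in the simple basis as $\pm[\Se(S_x)]$, which by Proposition \ref{serre_obscure} is a signed indicator vector of an interval. Your write-up is in fact more careful about the sign bookkeeping than the paper's two-line proof, but the underlying argument is identical.
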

\begin{proof}
Since $C$ is up to sign the matrix in the bases of the simple induced by the Serre functor on the Grothendieck group of $D^b(\Tam_n)$ the first point is clear. The image of a simple module by the Serre functor is up to a shift an interval. This implies the two last statements.

\end{proof}
The Tamari lattice $\Tam_n$ is isomorphic to the poset of tilting modules (resp. Cluster tilting modules) over an equioriented quiver of type $A_n$ (resp. $A_{n-1}$). Using results of Ladkani, our result can be generalized to these two families of posets.
\begin{corollary}
The Cambrian lattices of type $A$ and the lattices of tilting modules over a quiver of type $A$ have fractionally Calabi-Yau derived categories. 
\end{corollary}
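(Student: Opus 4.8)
The plan is to deduce the corollary from the main theorem purely formally, by transporting the fractionally Calabi--Yau property along derived equivalences. The first thing I would establish is that being $(m,n)$-fractionally Calabi--Yau is an invariant of the bounded derived category as a triangulated category. This is because the Serre functor is determined up to unique natural isomorphism by the bifunctorial duality $\Hom(X,\Se(Y))\cong\Hom(Y,X)^*$ used in Section \ref{representation}: any triangle equivalence $F\colon D^b(A)\xrightarrow{\sim} D^b(B)$ therefore satisfies $F\circ\Se_A\cong\Se_B\circ F$ and commutes with the shift $[1]$, so that $\Se_A^n\cong[m]$ holds if and only if $\Se_B^n\cong[m]$ holds. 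Thus once one algebra in a derived-equivalence class is fractionally Calabi--Yau, all of them are, with the same Calabi--Yau dimension.

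Next I would recall, as stated in the introduction, that $\Tam_n$ is simultaneously the Cambrian lattice of type $A$ for the linear (equioriented) orientation and the poset of tilting modules over the equioriented quiver of type $A_n$. So in each of the two families under consideration the Tamari lattice is the member attached to the linear orientation, and it is exactly this member whose derived category was shown to be $\frac{n(n-1)}{2n+2}$-fractionally Calabi--Yau.

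The key external input is Ladkani's work on universal derived equivalences of posets. His results show that, for a fixed Dynkin diagram, the incidence algebras of the posets of tilting modules — and, respectively, of the cluster-tilting objects, i.e. the Cambrian lattices — associated to the various orientations of the quiver are all derived equivalent to one another. Applying this in type $A_n$ and combining it with the derived invariance established at the outset, every Cambrian lattice of type $A$ and every poset of tilting modules over a quiver of type $A$ then has incidence algebra derived equivalent to $A_\kk(\Tam_n)$, hence has a fractionally Calabi--Yau derived category of the same Calabi--Yau dimension.

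The part that needs genuine care, rather than a citation, is matching conventions: I would have to check that the equioriented/linear orientation used throughout this paper is indeed the representative sitting in Ladkani's derived-equivalence class, and that his hypotheses cover precisely these posets, so that the chain of equivalences really starts at $\Tam_n$. The triangulated invariance argument and the identification of the Tamari lattice inside both families are then routine.
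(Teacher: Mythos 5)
Your argument is essentially the paper's proof: both deduce the corollary from Ladkani's universal derived equivalences between these posets and the Tamari lattices, combined with the (standard) fact that the fractionally Calabi--Yau property is invariant under triangle equivalences because the Serre functor is intrinsic to the triangulated category. The paper simply cites Ladkani and leaves the derived-invariance step implicit, whereas you spell it out; there is no substantive difference in approach.
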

\begin{proof}
It has been proved by Ladkani in \cite{ladkani_clust_tilt,ladkani_tilt} that these posets are derived equivalent to the Tamari lattices.
\end{proof}

\end{document}